\title{Calabi-Yau Caps, Uniruled Caps and Symplectic Fillings }

\documentclass[11pt]{article}

\usepackage{amsfonts}
\usepackage{amsthm}
\usepackage{xypic}
\usepackage{xcolor}
\usepackage{graphicx}
\usepackage{amsmath}
\usepackage{enumerate}
\usepackage[all]{xy}

\usepackage{comment}

\usepackage{amsmath,amsthm,amsfonts,amssymb,graphicx,psfrag}
\usepackage{color}

\newtheorem{thm}{Theorem}[section]

\newtheorem{prop}[thm]{Proposition}
\newtheorem{lemma}[thm]{Lemma}
\newtheorem{corr}[thm]{Corollary}
\newtheorem{conj}[thm]{Conjecture}

\theoremstyle{definition}
\newtheorem{rmk}[thm]{Remark}
\newtheorem{example}[thm]{Example}

\newtheorem{defn}[thm]{Definition}
\newtheorem{eg}[thm]{Example}

\begin{document}

\author{Tian-Jun Li, Cheuk Yu Mak  and Kouichi Yasui\thanks{
2010 Mathematics Subject Classification 57R17 (primary), 53D35 (secondary).
The first and second author are supported by NSF-grant DMS 1065927. The third author was partially supported by JSPS KAKENHI Grant Number 16K17593.}}

\AtEndDocument{\bigskip{\footnotesize%
  \textsc{School of Mathematics, University of Minnesota, Minneapolis, MN 55455} \par
  \textit{E-mail address}, Tian-Jun Li: \texttt{tjli@math.umn.edu} \par
  \addvspace{\medskipamount}
  \textsc{School of Mathematics, University of Minnesota, Minneapolis, MN 55455} \par
  \textit{E-mail address}, Cheuk Yu Mak: \texttt{makxx041@math.umn.edu} \par
\addvspace{\medskipamount}
  \textsc{Department of Mathematics, Graduate School of Science, Hiroshima University, 1-3-1 Kagamiyama, Higashi-Hiroshima, 739-8526, Japan} \par
  \textit{E-mail address}, Kouichi Yasui: \texttt{kyasui@hiroshima-u.ac.jp} \par

}}

%\date{December 7, 2015}

%\today

\maketitle

\begin{abstract}
We introduce symplectic Calabi-Yau caps to obtain new obstructions to exact fillings.
In particular, they imply that any exact filling of the standard contact structure on the unit cotangent bundle of a hyperbolic surface has vanishing first Chern class
and has the same integral homology and intersection form as its disk cotangent bundle.
This gives evidence to a conjecture that all of its exact fillings are diffeomorphic to the disk cotangent bundle.
As a result, we also obtain the first infinite family of Stein fillable contact 3-manifolds with uniform bounds on the
Betti numbers of its exact fillings  but admitting minimal strong fillings of arbitrarily large $b_2$.

Moreover, we introduce the notion of symplectic uniruled/adjunction caps and uniruled/adjunction contact structures to present
a unified picture to the existing finiteness results
on the topological invariants of exact/strong fillings of a contact 3-manifold.
As a byproduct, we  find new classes of contact 3-manifolds with the finiteness properties and extend Wand's obstruction of
planar contact 3-manifolds to uniruled/adjunction contact structures with complexity zero.

\end{abstract}

%\tableofcontents
\section{Introduction}

 Understanding symplectic fillings of a given contact 3-manifold $(Y,\xi)$ is a very active research area.
An ultimate goal is to classify all the Stein, exact or minimal strong symplectic fillings of a given contact manifold $(Y,\xi)$.
The first step towards this goal is to understand whether the given $(Y,\xi)$ has finitely many or infinitely many fillings.
%or whether there exists or not a bound on the topological quantities of all possible fillings.
Some families of contact 3-manifolds that admit finitely many Stein fillings are found (\cite{El91}, \cite{Mc90}, \cite{Lis08}, \cite{PlVHM10}, \cite{St13}, \cite{KaL}, \cite{GoLi14}, \cite{LiMa14}, etc).
For minimal strong fillings, Ohta, Ono and others have systematically investigated the links of  isolated singularities (\cite{OhOn99},  \cite{OhOn03}, \cite{OhOn05}, \cite{OhOn08},  \cite{BhOn} etc), and established uniqueness/finiteness/infiniteness for different classes of singularities.

Instead of classifying completely all the fillings, one can ask whether topological quantities for fillings are bounded.
It was conjectured by Stipsicz in \cite{Sti03} that all possible Euler characteristics and signatures of Stein fillings of a fixed $(Y,\xi)$ are bounded.
However, it was disproved by Baykur and Van Horn-Morris in \cite{BaVHM15}.
Based on the work of many people (\cite{OS04AMS}, \cite{AkEtMaSm08}, \cite{AY14}, \cite{BaVHM15}, \cite{BaVHM1212}, \cite{BaMoVHM14}, \cite{AkhOz2}, \cite{DKP13}, \cite{Y14}, etc), we now
know that many contact 3-manifolds have infinitely many Stein fillings up to
diffeomorphism.

Even though Stipsicz's conjecture is not true in general, it is important to know for what contact 3-manifolds the boundedness does hold.
The focus of the current paper is to address when the Betti numbers are bounded.
%some topological invariants (e.g.\ the Euler characteristics $e$ and signature $\sigma$ as conjectured by Stipsicz) of fillings.
%Instead of $e$ and $\sigma$, we want to look at the finiteness for Betti numbers.

\begin{defn}
A contact 3-manifold $(Y,\xi)$ is of {\bf Stein} (resp. {\bf exact}, {\bf strong}) {\bf Betti finite type}
if there are only finitely many possible values of the tuple  $(b_1, b_2, b_3)$ of Betti numbers for all of its
Stein (resp. exact, minimal strong) fillings.
\end{defn}

We recall that Stein fillings are exact fillings, and exact fillings are minimal strong fillings.
Note also that this finiteness of Betti numbers guarantees the finiteness of $e$ and $\sigma$.
Planar contact 3-manifolds (i.e. contact 3-manifolds supported by open books of page genus zero) have  Stein Betti finite type (\cite{P12}, \cite{Ka13}), but there are many other contact manifolds having this property too.

To study this question, we introduce three kinds of caps, namely Calabi-Yau caps, uniruled caps and adjunction caps.
Calabi-Yau caps give surprising new restrictions to exact fillings and in particular, we apply it to study exact fillings of unit cotangent bundles.
Uniruled and adjunction caps unify several known finiteness results  into a single picture.
Along the way, we will give some alternative proofs for these known results and strengthen them.
All contact manifolds in this paper are assumed to be closed, three dimensional and have co-oriented contact structures.

\begin{comment}
A symplectic cap $(P,\omega)$ is said to satisfy the property $(D)$ if it satisfies the following assumption.

\vspace{1em}
{\it(D): There is an embedded symplectic surface $S$ in $P$ such that $[S]=cPD([(\omega,\alpha)])$ for some $c > 0$ and some Louiville one form $\alpha$.}
\vspace{1em}
\end{comment}

\subsection{Calabi-Yau caps}

\begin{defn}\label{d:CY}
 A {\bf Calabi-Yau cap} of a contact 3-manifold $(Y,\xi)$ is a compact symplectic manifold $(P,\omega)$ which is a strong concave filling of $(Y,\xi)$ such that
 $c_1(P)$ is torsion.
\end{defn}

\begin{thm}\label{t:CY}
Suppose $(Y, \xi)$ admits a Calabi-Yau cap $(P,\omega_P)$.
Then $(Y, \xi)$ is of exact Betti finite type.
%and has at most finitely many simply connected Stein fillings up to homemorphism.

If, moreover, $(P,\omega_P)$ cannot be embedded in a uniruled manifold,
then all exact fillings of $(Y,\xi)$ have torsion first Chern class.

\end{thm}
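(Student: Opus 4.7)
The plan is to glue any exact filling $(W,\omega_W=d\lambda)$ to the Calabi-Yau cap $(P,\omega_P)$ along the common boundary $Y$, producing a closed symplectic 4-manifold $(X,\omega_X)$; we then bound the symplectic topology of $X$ uniformly in $W$ and transfer the bounds back via Mayer-Vietoris. The central input is the cohomological identity $c_1(X)\cdot[\omega_X]=0$. Since $\omega_W$ is globally exact on $W$, Stokes' theorem yields $\int_W\alpha\wedge\omega_W=\int_Y\alpha|_Y\wedge\lambda|_Y$ for any closed 2-form $\alpha$ representing $c_1(X)$. Since $c_1(P)$ is torsion, one may choose $\alpha$ so that $\alpha|_P=d\eta$ for some 1-form $\eta$ on $P$, whence $\int_P\alpha\wedge\omega_P=-\int_Y\eta|_Y\wedge\omega_P|_Y$. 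After arranging the Liouville primitives of the two strong fillings to agree on $Y$, a further application of Stokes' theorem on the closed 3-manifold $Y$ shows the latter boundary integral equals $-\int_Y\alpha|_Y\wedge\lambda|_Y$, and the two contributions cancel.

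With $K_X\cdot[\omega_X]=-c_1(X)\cdot[\omega_X]=0$ established, T.-J.~Li's classification of symplectic Kodaira dimension rules out the properly elliptic and general-type cases (both of which would require $K_X\cdot[\omega_X]>0$), leaving only two possibilities: either (i) $X$ is symplectically Calabi-Yau, i.e., $K_X$ is torsion; or (ii) $X$ is uniruled, hence by McDuff's structure theorem symplectomorphic to a blow-up of $\mathbb{CP}^2$ or of an $S^2$-bundle over a Riemann surface. This already yields the second assertion of the theorem: if $(P,\omega_P)$ cannot be embedded in any uniruled manifold, case (ii) never occurs, so $c_1(X)$ is torsion for every exact filling $W$, and restriction gives $c_1(W)=c_1(X)|_W$ torsion.

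For the first assertion (exact Betti finite type), one needs uniform bounds on the Betti numbers of $X$. In case (i), the Betti numbers of a symplectic Calabi-Yau 4-manifold are absolutely bounded by the work of Bauer and T.-J.~Li. In case (ii), the exactness of $\omega_W$ forbids any symplectic $(-1)$-sphere of $X$ from lying entirely in $W$ (else it would carry zero symplectic area), so every exceptional class necessarily meets the fixed piece $P$; a neck-stretching / SFT-style argument then bounds the number of exceptional classes by invariants of $(P,\omega_P)$, while an adjunction argument on a ruling fiber controls the base genus. With $b_i(X)$ uniformly bounded, the Mayer-Vietoris sequence for $X=W\cup_Y P$ yields uniform bounds on $b_i(W)$. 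The main technical obstacle is the uniruled case (ii): the $J$-holomorphic curve analysis required to bound the number of blow-ups and the base genus is where the real work lies, while the Calabi-Yau case and the cohomological identity above are essentially formal.
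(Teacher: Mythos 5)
Your setup coincides with the paper's: glue the exact filling to the cap, use exactness of $\omega_W$ and torsionness of $c_1(P)$ to get $c_1(X)\cdot[\omega_X]=0$, conclude from the Kodaira dimension trichotomy that $X$ is either symplectic Calabi--Yau or uniruled, dispose of the Calabi--Yau case by the Bauer/Li Betti number bounds, deduce the second assertion immediately, and transfer bounds to $W$ by Mayer--Vietoris. All of that is correct and is exactly how the paper proceeds (the paper packages your Stokes computation as a statement about the relative pairing $c_1\cdot[(\omega,\alpha)]$ and a gluing lemma, but the content is the same).

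The gap is in the uniruled case, precisely where you say ``the real work lies'': you assert that a neck-stretching argument ``bounds the number of exceptional classes by invariants of $(P,\omega_P)$,'' but no such argument is given, and as stated it does not work. Knowing that every exceptional sphere must meet $P$ (true, since a $(-1)$-sphere in the exact piece would have zero area) does not by itself bound how many exceptional classes $X$ can have. Moreover, neck-stretching is not needed here at all: because $\omega_W$ is exact, the class of a surface $S\subset P$ Poincar\'e dual to $c[(\omega_P,\alpha_P)]$ is Poincar\'e dual to $c[\omega_X]$ in the closed manifold, so every exceptional class pairs positively with $[S]$ for free. (Neck-stretching is what the paper uses for the uniruled-cap theorem, where fillings are only strong.) What is actually missing is the quantitative step that bounds $b_2^-(X)$, equivalently bounds $c_1(X)^2$ from below by a quantity depending only on $P$. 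The paper's mechanism is: choose $S\subset P$ dual to a large multiple of $[(\omega_P,\alpha_P)]$ so that $[S]^2\ge g(S)-1$ and $-c_1(P)\cdot[S]+[S]^2\ge 0$ (achieved by taking parallel copies and resolving intersections), produce a connected embedded symplectic representative $\widetilde S$ of $[S]$ of positive genus via Li--Li, and then invoke the Li--Zhang inequality $(K_\omega+[\widetilde S])^2\ge 0$, valid for symplectic surfaces of positive genus pairing positively with all exceptional classes. Together with adjunction this yields $c_1(X)^2\ge [S]^2-2(2g(S)-2)$, a bound depending only on $P$, and then $c_1(X)^2=9-4b_1(X)-b_2^-(X)$ (using $b_2^+(X)=1$) finishes the Betti bound. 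Your base-genus control is fine in spirit (the paper bounds it by the genus of $S$ via a degree argument), but without the $(K_\omega+[\widetilde S])^2\ge 0$ step, or some substitute for it, the bound on $b_2(X)$ --- and hence the first assertion of the theorem --- is not established.
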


%Some basic materials for uniruled manifold are reviewed in Section \ref{}
It is worthy to point out that in many situations, there are simple (topological) obstructions for a Calabi-Yau cap $(P,\omega_P)$
to be embedded in a uniruled manifold.
On the other hand, the ingredients in the proof of Theorem \ref{t:CY} can be used to obtain the following surprising consequence.

\begin{thm}\label{t:lagrangian}
Let $Y$ be the unit cotangent bundle of a closed orientable surface $\Sigma_g$ of genus $g$, equipped with the standard contact structure $\xi_{std}$.
Then any exact filling of $(Y,\xi_{std})$ has the same integral homology and intersection form as $T^*\Sigma_g$ and has vanishing first Chern class.
\end{thm}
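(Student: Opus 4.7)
The plan is to combine Theorem \ref{t:CY} with an explicit Calabi-Yau cap and then to classify the closed symplectic $4$-manifold obtained by capping off an arbitrary exact filling. I describe the case $g \geq 2$ in detail; the cases $g = 0, 1$ can be handled by parallel cap constructions (using a $(-2)$-sphere in a K3 surface or a symplectic torus in $T^4$ respectively).

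To build the cap, take a polarized abelian surface $(A, \omega_A)$ and a smooth complex curve $C \subset A$ of genus $g$ with $[C]^2 = 2g - 2$; such a $C$ exists by Bertini applied to a $(1, g-1)$-polarization (the theta divisor works for $g = 2$). Set $P := A \setminus \nu(C)$ with the induced symplectic form. Because $[C]^2 > 0$, the tubular neighborhood $\nu(C)$ is a convex symplectic filling of its boundary, so $P$ is a concave cap; the boundary is the circle bundle over $\Sigma_g$ of the relevant Euler number, agreeing with $Y = ST^*\Sigma_g$ as an oriented contact manifold with the standard contact structure $\xi_{std}$ (via the symplectic neighborhood theorem). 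Since $c_1(A) = 0$, also $c_1(P) = 0$, so $P$ is a Calabi-Yau cap. A separate argument shows that $P$ cannot embed in a uniruled $4$-manifold: the long exact sequence of the pair $(A, \nu(C))$ yields $b_1(P) = 4$, so any uniruled ambient manifold must be an irrational ruled surface over a base of genus $\geq 2$, but any symplectic ruling sphere would then intersect $P$ in a nontrivial sphere class, contradicting the absence of nontrivial sphere classes in the abelian surface $A$.

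With the cap in hand, Theorem \ref{t:CY} gives that every exact filling $W$ of $(Y, \xi_{std})$ has torsion $c_1$. Gluing produces a closed symplectic $4$-manifold $X := W \cup_Y P$ with torsion canonical class, so by T.-J.~Li's classification $X$ is diffeomorphic to a K3 surface, an Enriques surface, or a symplectic $T^2$-bundle over $T^2$. A Mayer-Vietoris computation using $b_1(P) = 4$, $b_2(P) = 2g+1$, $b_1(Y) = 2g$, and $b_2(Y) = 2g$ (all computed from the structure of $A$, $C$, and the circle bundle $Y \to \Sigma_g$) rules out the first two possibilities and pins down the diffeomorphism type of $X$ as a specific torus bundle (typically $T^4$). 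The integral Mayer-Vietoris sequence then determines $H_*(W; \mathbb{Z}) \cong H_*(T^*\Sigma_g; \mathbb{Z})$; since $H^2(W; \mathbb{Z}) \cong \mathbb{Z}$ is torsion-free, the torsion $c_1(W)$ output by Theorem \ref{t:CY} must vanish. The intersection form of $W$ is $(2 - 2g)$, computed from the self-intersection of the generator of $H_2(W) \cong \mathbb{Z}$, which via Mayer-Vietoris is identified with a capped-off version of $C \subset A$, inheriting self-intersection from the torus-bundle structure of $X$ together with $[C]^2 = 2g - 2$ in $A$.

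The main obstacle is upgrading the rational classification of $X$ to integral information: one must rule out torsion in $H_1(W)$ and pin down the intersection form exactly, which requires careful tracking of signature corrections to Novikov additivity arising from the nontrivial rational $H_1(Y)$. A secondary technical point is the non-uniruledness of $P$, which requires delicate positivity arguments about rational curves in any hypothetical uniruled ambient manifold together with the Calabi-Yau structure of $A$.
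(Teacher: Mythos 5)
Your overall strategy (build a Calabi--Yau cap for $(ST^*\Sigma_g,\xi_{std})$, feed it into Theorem \ref{t:CY}, identify the closed manifold obtained by capping an arbitrary exact filling, then run Mayer--Vietoris) is exactly the paper's, but your cap construction has a fatal flaw: you remove a neighborhood of a \emph{symplectic} curve $C$ with $[C]^2=2g-2>0$, and for such a curve everything comes out backwards. By the Gay--Stipsicz criterion (Theorem \ref{criterion} in the paper), a tubular neighborhood of a symplectic surface of \emph{positive} self-intersection is a \emph{concave} neighborhood, i.e.\ it is itself a cap of its boundary; consequently $P=A\setminus\nu(C)$ is a convex (strong) filling of that boundary, not a concave cap, so it cannot be glued onto fillings of $Y$ at all. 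Worse, even as a contact 3-manifold the boundary is wrong: the contact structure induced on the boundary of a disk-bundle neighborhood of a symplectic surface is transverse to the circle fibers (the fibers become Reeb orbits), whereas $\xi_{std}=\ker(p\,dq)$ on $ST^*\Sigma_g$ contains the fiber directions (the fibers are Legendrian). The symplectic neighborhood theorem you invoke identifies neighborhoods of symplectic surfaces; to produce $(ST^*\Sigma_g,\xi_{std})$ as a concave boundary one needs the \emph{Weinstein} neighborhood of a \emph{Lagrangian} genus-$g$ surface, whose complement in a closed manifold is then genuinely a cap. This is precisely why the paper's Lemma \ref{l:constructLag} goes to the trouble of building an embedded Lagrangian $\Sigma_g$ (by Lagrangian surgery on $g$ Lagrangian tori and a Lagrangian sphere in a K3); note that a Lagrangian $L\cong\Sigma_g$ automatically has $[L]^2=-\chi(L)=2g-2$, which is the self-intersection you were trying to engineer holomorphically. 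The same confusion infects your $g=1$ case ("symplectic torus in $T^4$" should be a Lagrangian torus).

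Even if one granted you a correct cap, the endgame is thinner than it needs to be. The delicate part of the paper's proof is integral, not rational: one must pin down $H_2(N;\mathbb{Z})\cong\mathbb{Z}$, show the generator has square exactly $2g-2$ (not $k^2(2g-2)$ for some $k>1$), and deduce $H_1(N;\mathbb{Z})\cong\mathbb{Z}^{2g}$ with no torsion. In the paper this is done by embedding the lattice $-2E_8\oplus 2H\oplus(2-2g)\oplus(0)^{2g}$ into the intersection form of the cap, using the classification of unimodular forms to identify the orthogonal complement of $-2E_8\oplus 2H$ in the homology K3 as $H$, and then extracting $k=1$ from the torsion $\mathbb{Z}/(2g-2)\mathbb{Z}$ in $H_1(Y;\mathbb{Z})$. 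Your sketch acknowledges these issues as "obstacles" but does not resolve them; as written, the torsion in $H_1(W)$ and the exact value of the self-intersection of the generator of $H_2(W)$ are not determined.
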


We remark that when $g=0,1$, $(Y,\xi_{std})$ has a unique exact filling up to symplectic deformation equivalence given by the disk cotangent bundle (See \cite{Hi00}, \cite{We10} and \cite{Sti02}).
However, there has been no good understandings of exact fillings for  $g>1$.
In contrast, no exact/Stein filling of $(Y,\xi_{std})$ that is not diffeomorphic to $T^*\Sigma_g$ has been found.
We heard from Chris Wendl the following conjecture.

\begin{conj}\label{cj:lagrangian}
The diffeomorphism types of Stein/exact fillings of $(Y,\xi_{std})$ is unique and given by the disk cotangent bundle for any $g$.
\end{conj}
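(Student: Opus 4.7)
The plan is to construct a Calabi-Yau cap for $(Y,\xi_{std})$ and feed it into Theorem \ref{t:CY}. For each $g\geq 1$, I would realize $\Sigma_g$ as a Lagrangian surface in a K3 surface $X$: start with a smooth holomorphic curve $C\subset X$ of genus $g$ (the adjunction identity $C\cdot C=2g-2$ on a K3 is consistent with any $g\geq 0$, and such curves exist in suitable K3s) and perform a hyperK\"ahler rotation so that $C$ becomes Lagrangian with respect to a new symplectic form on the underlying smooth K3. By Weinstein's Lagrangian neighborhood theorem, a tubular neighborhood $N$ of $\Sigma_g$ is symplectomorphic to a disk cotangent bundle $DT^*\Sigma_g$ with convex boundary $(Y,\xi_{std})$. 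Setting $P := X\setminus \mathrm{int}(N)$ gives a strong concave filling of $(Y,\xi_{std})$, and since $c_1(X)=0$ the restriction $c_1(P)$ vanishes, so $P$ is a Calabi-Yau cap.

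Next I would verify that $P$ cannot be symplectically embedded in any uniruled symplectic 4-manifold, so that the second half of Theorem \ref{t:CY} applies. A uniruled symplectic 4-manifold has Kodaira dimension $-\infty$; if $P$ embedded symplectically into such a $U$, one could produce, by capping on the concave side with $DT^*\Sigma_g$, a closed symplectic 4-manifold $Z$ that is simultaneously uniruled and contains the Lagrangian $\Sigma_g$. I would derive the required contradiction using Kodaira-dimension additivity under such decompositions, together with the fact that $[\omega]$ pairs to zero with the Lagrangian class while uniruledness forces $c_1\cdot[\omega]>0$, inconsistent with the constraints imposed by a Lagrangian of positive genus.

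Granted these two facts, Theorem \ref{t:CY} implies that every exact filling $W$ of $(Y,\xi_{std})$ has bounded Betti numbers and torsion first Chern class. Given such a $W$, glue it to $P$ along $Y$ to form a closed symplectic 4-manifold $X' = W\cup_Y P$; since $c_1(\xi_{std})$ is torsion in $H^2(Y;\mathbb{Z})$ (because $Y$ is a circle bundle over $\Sigma_g$) and both $c_1(W)$ and $c_1(P)$ are torsion, the class $c_1(X')$ is torsion, so $X'$ is a symplectic Calabi-Yau 4-manifold. By the classification of such manifolds (Bauer, T.-J.~Li, and others), $X'$ has the integral homology of a K3 surface, an Enriques surface, $T^4$, or a torus bundle over $T^2$.

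The main obstacle is the last step: pinning down $X'$ homologically as a K3 and then extracting the topology of $W$. Since $P$ is a very large piece of a K3, a direct Mayer--Vietoris computation using the decomposition $X=N\cup_Y P$ shows that $b_2(P)$ is close to $b_2(X)=22$, which rules out the smaller symplectic Calabi-Yau candidates (Enriques, $T^4$, torus bundles over $T^2$) and forces $X'$ to have the integral homology of K3. Applying Mayer--Vietoris once more to $X'=W\cup_Y P$, and using the integral homology of $Y$ from the Gysin sequence of the circle bundle $Y\to\Sigma_g$, one pins down the integral homology and intersection form of $W$ and matches them with those of $DT^*\Sigma_g$. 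In particular, $H^2(W;\mathbb{Z})$ emerges torsion-free, so the torsion class $c_1(W)$ must in fact vanish, completing the proof.
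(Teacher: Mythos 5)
The statement you are asked to prove is Conjecture \ref{cj:lagrangian}, which is an open conjecture in this paper, not a theorem: the authors attribute it to Wendl, offer Theorem \ref{t:lagrangian} only as ``evidence'' for it, and note that even the Stein case is known (by Sivek--Van Horn-Morris) only up to s-cobordism relative to the boundary. Your argument, even granting every step, terminates with the integral homology, intersection form, and vanishing of $c_1$ for an arbitrary exact filling $W$ --- that is, it reproves (a version of) Theorem \ref{t:lagrangian}. This is very far from the conjecture's assertion that $W$ is \emph{diffeomorphic} to $DT^*\Sigma_g$. Homology and the intersection form do not determine the diffeomorphism type of a compact 4-manifold with boundary; they do not even determine $\pi_1(W)$ (knowing $H_1(W;\mathbb{Z})\cong\mathbb{Z}^{2g}$ does not give $\pi_1(W)\cong\pi_1(\Sigma_g)$), nor the homeomorphism type, nor --- in dimension four especially --- the smooth structure. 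Your closing sentence ``completing the proof'' is therefore unjustified: there is no step anywhere in the proposal that produces a diffeomorphism $W\cong DT^*\Sigma_g$, and no currently known technique supplies one. This is precisely why the statement remains a conjecture.

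Two smaller points on the portion that does track the paper's Theorem \ref{t:lagrangian}. First, your exclusion of uniruled embeddings via ``Kodaira-dimension additivity'' is not an argument; the efficient route is lattice-theoretic: the complement $P$ of the Lagrangian in the K3 carries a sublattice isomorphic to $-2E_8\oplus 2H$, so $b_2^+(P)\geq 2$, whereas every uniruled 4-manifold has $b_2^+=1$. Second, your deduction that $c_1(X')$ is torsion from $c_1(W)$ and $c_1(P)$ being torsion is incomplete: Mayer--Vietoris only places $c_1(X')$ in the image of $H^1(Y;\mathbb{Q})\to H^2(X';\mathbb{Q})$, which need not vanish. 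The paper instead works with the numerical condition $c_1(X')\cdot[\omega]=0$ via Lemma \ref{l:splittingCohomologyPairing} and the Kodaira dimension trichotomy, which sidesteps this issue. These are repairable; the missing diffeomorphism classification is not.
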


Therefore, Theorem \ref{t:lagrangian} gives strong evidence to Conjecture \ref{cj:lagrangian}. After the first version of this paper  appeared
Sivek and Van Horn-Morris in  \cite{SiVHM15} were able to use the ideas of Calabi-Yau caps introduced here to obtain parts of Theorem \ref{t:lagrangian}
and derive  the beautiful result that the Stein version of Conjecture \ref{cj:lagrangian} is true up to s-cobordism relative to boundary.

%When we were finalizing this second version, we learned of the paper \cite{SiVHM15} by Sivek and Van Horn Morris, where part of Theorem \ref{t:lagrangian} is obtained independently,
%also using the idea of Calabi-Yau cap.
%They also derive the beautiful result that the Stein version of Conjecture \ref{cj:lagrangian} is true up to s-cobordism relative to boundary.

As an immediate application of Theorem \ref{t:lagrangian}, we obtain Corollary \ref{c:infiniteFamily}.
This subtle difference between exact Betti finite type and strong Betti finite type is an interesting phenomenon that we do not find in the literature.
It also implies that Theorem \ref{t:CY} cannot be strengthened to strong Betti finite type.

\begin{corr}\label{c:infiniteFamily}
 There exist infinitely many Stein fillable contact 3-manifolds such that each of them is of exact Betti finite type but not of strong Betti finite type.
\end{corr}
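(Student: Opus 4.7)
The plan is to take as the infinite family the unit cotangent bundles $\{(Y_g, \xi_{std})\}_{g \geq 2}$, where $Y_g = ST^*\Sigma_g$. These 3-manifolds are pairwise non-diffeomorphic, distinguished by their fundamental groups, and each is Stein fillable via the disk cotangent bundle $DT^*\Sigma_g$. By Theorem \ref{t:lagrangian}, every exact filling of $(Y_g, \xi_{std})$ has the integral homology and intersection form of $DT^*\Sigma_g$, so each $(Y_g, \xi_{std})$ is of exact Betti finite type.

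To show that no $(Y_g, \xi_{std})$ is of strong Betti finite type, I would construct for each $g \geq 2$ an infinite sequence of minimal strong symplectic fillings $W_{g,k}$ with $b_2(W_{g,k}) \to \infty$. The key observation is that if $X$ is a closed minimal symplectic 4-manifold containing a symplectic genus-$g$ surface $S$ of self-intersection $2g-2$, then $X \setminus \nu(S)$ is a minimal strong filling of $(Y_g, \xi_{std})$: the normal circle bundle of $S$, viewed with the convex-filling orientation, is precisely $ST^*\Sigma_g$ equipped with $\xi_{std}$, and minimality of the complement follows because any exceptional sphere in $X \setminus \nu(S)$ remains one in $X$.

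A natural source of seeds is the $K3$ surface: by adjunction (since $c_1(K3) = 0$), any smooth symplectic genus-$g$ curve $S_0 \subset K3$ has self-intersection exactly $2g-2$, and such curves exist for every $g \geq 2$ by taking a genus-$g$ polarized $K3$. To produce a family of increasing $b_2$, I would iteratively perform symplectic fiber sums of $(K3, S_0)$ with additional copies of $K3$ (or other minimal symplectic 4-manifolds such as $E(n)$) along a symplectic torus $T \subset K3$ chosen disjoint from $S_0$; such a torus can be obtained either as a fiber of a suitably chosen elliptic fibration, or as a perturbation of a Lagrangian torus lying in $K3 \setminus \nu(S_0)$. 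By Usher's minimality theorem the resulting $X_k$ remain minimal while $b_2(X_k) \to \infty$, and $S_0 \subset X_k$ persists as a symplectic surface of self-intersection $2g-2$, so $W_{g,k} = X_k \setminus \nu(S_0)$ yields the desired family. The main obstacle is the construction of a symplectic torus in $K3 \setminus \nu(S_0)$ disjoint from $S_0$ usable as the sum locus, which can be arranged by appropriate choice of the $K3$ and its polarization. Combined with Theorem \ref{t:lagrangian}, this gives the corollary.
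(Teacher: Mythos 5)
Your choice of the family $\{(Y_g,\xi_{std})\}_{g\geq 2}$, the Stein fillability via $DT^*\Sigma_g$, and the deduction of exact Betti finiteness from Theorem~\ref{t:lagrangian} all match the paper. The problem is your ``key observation.'' It is true that $\partial\nu(S)$ is diffeomorphic to $ST^*\Sigma_g$ and that a symplectic genus-$g$ surface of square $2g-2>0$ admits a concave neighborhood, so $X\setminus\nu(S)$ is a strong filling of \emph{some} contact structure on $ST^*\Sigma_g$; but you give no argument that this contact structure is $\xi_{std}$, and in fact it is not. Your own proposed seed refutes it: take a smooth ample genus-$g$ curve $S_0$ in a polarized $K3$ with $PD[S_0]$ proportional to $[\omega]$. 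Then $K3\setminus S_0$ is an affine variety, hence a Stein (in particular exact) filling of the contact structure induced on $\partial\nu(S_0)$, and it has $b_2=21$. If that contact structure were $\xi_{std}$, this would contradict Theorem~\ref{t:lagrangian}, which forces every exact filling of $(ST^*\Sigma_g,\xi_{std})$ to have $b_2=1$. The point is that the concave boundary of a neighborhood of a \emph{symplectic} surface of genus $g$ and square $2g-2$ carries the contact structure supported by Gay's open book $(\Sigma_g^{2g-2}, t_{\delta_{2g-2}}\circ\cdots\circ t_{\delta_1})$, whereas $\xi_{std}=\ker(p\,dq)$ is the convex boundary of a Weinstein neighborhood of a \emph{Lagrangian} genus-$g$ surface; this is precisely why the Calabi--Yau cap in the proof of Theorem~\ref{t:lagrangian} is built from a Lagrangian surface in $K3$, not a symplectic one. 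So your construction produces strong fillings of the wrong contact structure, and the second half of the argument collapses; the fiber-sum/Usher step, while fine in itself, cannot repair this.

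For comparison, the paper's route to the failure of strong Betti finiteness is entirely different: for $g>1$, McDuff \cite{Mc91} produced a strong semifilling $W_g$ of $(Y_g,\xi_{std})$ with \emph{disconnected} contact-type boundary; capping off the other boundary component with caps of arbitrarily large $b_2^+$ (which exist for any contact 3-manifold by \cite{EtHo02}) and blowing down yields minimal strong fillings of $(Y_g,\xi_{std})$ with arbitrarily large $b_2^+$. If you want to salvage a surface-complement argument, you would need to produce symplectic hypersurfaces of contact type inducing $\xi_{std}$ itself, which essentially forces you back to Lagrangian/Weinstein neighborhoods or to McDuff-type cobordisms.
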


\subsection{Uniruled/Adjunction caps}

For any symplectic cap (symplectic concave filling) $(P,\omega)$ of $(Y,\xi)$, there is Liouville vector field $V$ defined near $Y$ pointing inward along $Y$.
The induced one form $\alpha=\iota_V \omega$ is a contact one form on $\partial P$ such that $(\partial P, \ker (\alpha))$ is contactomorphic to $(Y,\xi)$.
For any choice of $V$, we call the induced one form $\alpha$ a {\bf Liouville one form}.
Given a Liouville one form $\alpha$, $[(\omega,\alpha)]$ is a relative cohomology class in $H^2(P ,\partial P ,\mathbb{R})$.

\begin{defn}\label{d:uniruled}
 A {\bf uniruled cap}  of a contact 3-manifold $(Y,\xi)$ is a symplectic concave filling $(P,\omega)$ of $(Y,\xi)$ such that $c_{1}(P)\cdot[(\omega,\alpha)]>0$
 for some Liouville one form $\alpha$.
\end{defn}

Since $[(\omega,\alpha)]$ is a relative class, $c_{1}(P)\cdot[(\omega,\alpha)]$ is well-defined and it is further explained in Subsection \ref{ss:RelativePairing}.
%Uniruled caps are never Calabi-Yau.
We call a contact 3-manifold admitting  a uniruled cap a {\bf uniruled contact manifold} (See Proposition \ref{prop:uniruled} for its relation with uniruled symplectic manifolds).
 A contact 3-manifold that is strong symplectic cobordant to a uniruled contact manifold is also uniruled (see Lemma \ref{stable under strong cobordism}).
 The class of uniruled contact manifolds is strictly larger than the planar class (see Lemma~\ref{rmk: uniruled-planar} and note that $(T^3,\xi_{std})$ is non-planar but admits a uniruled cap).

If $(Y,\xi)$ admits a uniruled cap, then we can derive  restrictions on fillings that are strictly stronger than when $(Y, \xi)$ admits a Calabi-Yau cap.

\begin{thm}\label{t:uniruled}
 Suppose a contact 3-manifold $(Y,\xi)$ admits a uniruled cap $(P,\omega_P)$.
 Then $(Y, \xi)$ is of strong Betti finite type.
\end{thm}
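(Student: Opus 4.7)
The plan is to close up any minimal strong filling by $P$, invoke the classification of closed symplectic $4$-manifolds of Kodaira dimension $-\infty$, and then bound the Betti numbers of the filling via Mayer--Vietoris.

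Given a minimal strong filling $(W,\omega_W)$ of $(Y,\xi)$, I would first rescale $\omega_W$ (equivalently, adjust its Liouville collar) so that the contact form it induces on $Y$ matches $\alpha$, and then form the closed symplectic $4$-manifold $(X,\omega_X)=(W,\omega_W)\cup_Y(P,\omega_P)$. Additivity of the relative cup pairing on a closed manifold split along its boundary gives
\[
c_1(X)\cdot[\omega_X]\;=\;c_1(W)\cdot[(\omega_W,\alpha)]+c_1(P)\cdot[(\omega_P,\alpha)].
\]
The second summand is a fixed positive number by the uniruled cap hypothesis, and rescaling $\omega_W$ in the collar can make the first summand arbitrarily small in absolute value, so we may arrange $c_1(X)\cdot[\omega_X]>0$.

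By the theorems of A.-K.~Liu, Ohta--Ono, and Taubes, a closed symplectic $4$-manifold with $c_1\cdot[\omega]>0$ has symplectic Kodaira dimension $-\infty$, and is therefore a symplectic blow-up of $\mathbb{CP}^2$ or of an $S^2$-bundle over some closed oriented surface $\Sigma_h$; write $X=X_{\min}\#k\,\overline{\mathbb{CP}^2}$. Since $W$ is minimal, no exceptional sphere of $X$ lies in $W$; by McDuff's theorem on exceptional classes, applied with a tame $J$ for which $Y$ is $J$-convex, the $k$ exceptional classes of $X$ are all represented by $J$-holomorphic spheres that can be isotoped into $P$. Blowing them down inside $P$ produces a minimal $X_{\min}=W\cup_Y P_{\min}$, with $k\le b_2(P)$.

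The Betti numbers of $X_{\min}$ depend only on the base genus $h$ (and are trivial for $X_{\min}=\mathbb{CP}^2$). A further $J$-holomorphic argument using the ruling $X_{\min}\to\Sigma_h$, positivity of intersections between generic fibers and $P_{\min}$, and the adjunction identity $c_1(X_{\min})\cdot F=2$ for the fiber class $F$ should produce a cap-only bound $h\le C(P)$. Once $h$ and $k$ are bounded in terms of $P$ alone, the Mayer--Vietoris sequence for $X=W\cup_Y P$ gives
\[
b_i(W)\;\le\;b_i(X)+b_i(Y)\;=\;b_i(X_{\min})+k\,\delta_{i,2}+b_i(Y),
\]
which is uniformly bounded, so $(Y,\xi)$ is of strong Betti finite type. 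The principal obstacle is the last step: producing a uniform bound on $h$ from only the data of $P$, independent of the filling $W$---this requires careful bookkeeping of $J$-holomorphic ruling fibers against the fixed topology of $P_{\min}$, using that any such fiber must intersect $P_{\min}$ and that the relative $\omega$-pairing on $P$ is a fixed positive quantity.
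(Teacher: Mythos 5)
Your overall skeleton (glue the cap to the filling, arrange $c_1(X)\cdot[\omega_X]>0$ by rescaling so that $X$ is uniruled, then bound the topology of $X$ and apply Mayer--Vietoris) coincides with the paper's, and the rescaling step is fine --- the paper scales the cap by a large factor $t$ in Lemma \ref{l:splittingCohomologyPairing} rather than shrinking $\omega_W$, which amounts to the same thing. The first genuine gap is your assertion that all exceptional classes of $X$ are represented by $J$-holomorphic spheres that ``can be isotoped into $P$'' (whence $k\le b_2(P)$). $J$-convexity of $Y$ from the $W$ side does not prevent a closed $J$-holomorphic sphere from crossing $Y$ into $W$; the maximum principle only excludes curves trapped in a collar, and McDuff's results give no such localization for free. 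The paper deliberately proves something weaker: every exceptional class $e$ pairs \emph{positively} with the Lefschetz dual $[D]$ of $c[(\omega_P,\alpha_P)]$ (Corollary \ref{c:neck}), and even this requires a genuine SFT neck-stretching argument --- stretch along $Y$, extract the top building in the completion of $P$, and show each component contributes positively to $[D]\cdot e$ via a Stokes computation at the asymptotic Reeb orbits. That positivity is the hypothesis of Lemma \ref{l:weiyi}, which yields $(-c_1(X)+[\widetilde S])^2\ge 0$ for a symplectic representative $\widetilde S$ of $[S]=cPD[(\omega_P,\alpha_P)]$, hence a lower bound on $c_1(X)^2$ depending only on $P$, hence (via $c_1(X)^2=9-4b_1(X)-b_2^-(X)$, using $b_2^+(X)=1$) an upper bound on $b_2^-(X)$. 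Your route of literally blowing down inside $P$ would need a much stronger localization statement than anything you cite.

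The second gap is the one you flag yourself as the ``principal obstacle'': bounding the base genus $h$ by data of $P$ alone. Here no bookkeeping of $J$-holomorphic ruling fibers is needed; the paper's Lemma \ref{l:genusUpperBound} does it purely homologically. Choose a closed embedded surface $S\subset P$ dual to $c[(\omega_P,\alpha_P)]$, so that $[S]^2>0$ by Lemma \ref{l:relCohProperties}. In a blown-up $S^2$-bundle over $\Sigma_h$, any class spanned by the fiber and the exceptional classes has non-positive square, so $[S]$ must have non-zero section component; that component is the degree of the composite $S\hookrightarrow X\to\Sigma_h$, which forces $h\le g(S)$. The same surface $S$ gives $b_2^+(P)=1$ and $b_2^+(N)=0$. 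In short, the two places where your proposal is incomplete are precisely the two technical inputs of the paper's proof: the neck-stretching positivity of exceptional classes against the cap's relative symplectic class, and the elementary genus bound of Lemma \ref{l:genusUpperBound}.
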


%Stein fillings of a non-prime contact 3-manifold always split as the boundary connected sum of Stein fillings of its prime pieces (\cite{CiEl12}, \cite{El90}).
%Therefore, it is sufficient to consider prime contact 3-manifolds for the study of the finiteness properties of Stein fillings.
\begin{rmk}
We would like to point out that to the best of our knowledge, most previously known exact/strong Betti finite type contact 3-manifolds have a uniruled cap.
%For example, any planar contact 3-manifold admits a uniruled cap (see Corollary~\ref{rmk: uniruled-planar}).
\end{rmk}

We also introduce another type of caps which we call {\bf adjunction caps}.
It is based on an observation that existence of a {\bf smoothly} embedded surface in a closed symplectic manifold with sufficiently large
self-intersection number relative to the genus  implies that the symplectic manifold is uniruled (Proposition \ref{prop:adjunction}).
Adjunction caps have similar properties as uniruled caps, including Theorem~\ref{t:uniruled}.
Similarly, every planar contact 3-manifold also has an adjunction cap (see \cite{Et04}, cf. Lemma \ref{rmk: uniruled-planar} )
and we call a contact manifold admitting an adjunction cap an {\bf adjunction contact manifold}.

%For uniruled/adjunction caps we conjecture that the Stein condition could be removed in
%Corollaries \ref{cor 2} and \ref{cor:homeo}.

As far as the examples we will see in this paper, all uniruled caps are adjunction and vice versa.
It is possible that these are in fact  the same notion.
For topologists, adjunction caps are often easier to find than uniruled caps.

\subsection{Genus invariants}

  We define  genus type invariants for uniruled/adjunction caps, which describe the range of possible genera of the base of uniruled symplectic manifolds that can be obtained from the caps.
  These invariants induce a complexity invariant for uniruled/adjunction contact manifolds, which is similar to the Seifert genus of knots.
  All strongly fillable planar contact 3-manifolds are uniruled and adjunction of complexity zero. We illustrate in Remark \ref{rmk: unifying picture} that many previously known Betti finite type examples have complexity zero.

We can give explicit bounds of the topological invariants in Theorem  \ref{t:uniruled} in terms of the genus invariants of the caps.
These bounds give an alternative proof of the following obstruction of planarity and generalize it to a larger class of contact 3-manifolds,
namely, uniruled/adjunction contact manifolds with complexity zero (See Corollary \ref{lem: strong extension of Wand}).

\begin{corr}[Wand~\cite{Wa12}]\label{cor:Wand}
For a given planar contact 3-manifold, $e+\sigma$ of any two  strong fillings of the contact 3-manifold are equal to each other.
\end{corr}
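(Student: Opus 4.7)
The plan is to deduce Corollary~\ref{cor:Wand} from Theorem~\ref{t:uniruled} by producing, for every planar $(Y,\xi)$, a uniruled (or adjunction) cap of complexity zero and then tracking $e+\sigma$ explicitly through the capping construction. The arithmetic behind the statement is that $e+\sigma$ is additive under boundary-connected capping (because $e(Y)=0$ for a closed 3-manifold and Novikov additivity holds for $\sigma$), and simultaneously $e+\sigma$ is invariant under symplectic blow-ups of a closed symplectic 4-manifold. So if one can force the capped-off manifold to lie in a fixed deformation class of closed symplectic 4-manifolds with a common value of $e+\sigma$, then $e(W)+\sigma(W)$ is forced to take a single value.

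First I would invoke (or re-derive, as the paper indicates) Lemma~\ref{rmk: uniruled-planar}: every strongly fillable planar contact 3-manifold $(Y,\xi)$ admits a uniruled cap $(P,\omega_P)$ of complexity zero, the construction coming from the Lefschetz-fibration-over-the-disk presentation of the planar open book, capped off by a symplectic $S^2$-fibration over $S^2$ minus a neighborhood of a fiber. Complexity zero here is the statement that after gluing any strong filling the resulting closed manifold carries a uniruling whose base has genus zero.

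Next, for an arbitrary minimal strong filling $(W,\omega_W)$ of $(Y,\xi)$, form the closed symplectic 4-manifold $X=W\cup_Y P$. By the complexity zero condition, $X$ is a symplectically uniruled manifold with rational base, hence $X$ is a (possibly blown-up) rational 4-manifold, that is, either a blow-up of $\mathbb{CP}^2$ or of an $S^2$-bundle over $S^2$. On all such closed symplectic 4-manifolds one has
\begin{equation*}
e(X)+\sigma(X)=4,
\end{equation*}
because this holds for $\mathbb{CP}^2$ and for $S^2\times S^2$ and is preserved under blow-up (which changes $e$ by $+1$ and $\sigma$ by $-1$). Using $e(Y)=0$ and Novikov additivity,
\begin{equation*}
e(W)+\sigma(W)=\bigl(e(X)+\sigma(X)\bigr)-\bigl(e(P)+\sigma(P)\bigr)=4-e(P)-\sigma(P),
\end{equation*}
which depends only on the fixed cap $P$, not on $W$. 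Since blowing up $W$ preserves $e+\sigma$, this extends from minimal strong fillings to all strong fillings, giving the conclusion for any two strong fillings.

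The main obstacle is step one, i.e.\ verifying that planar genuinely implies the existence of a cap of complexity zero in the sense of the paper. Merely producing a uniruled cap is not enough: one has to control the base genus of the resulting uniruling for every choice of filling, so that $X$ is rational rather than ruled over a positive-genus base (where $e+\sigma=4-4g$ would still be determined but only after knowing the base genus is intrinsic to the cap, not to $W$). Once this is in hand, the rest is the bookkeeping above; the arguments establishing the genus invariants in the previous subsection are exactly what makes this rigidity available, and it is this ingredient that also yields the strengthened statement in Corollary~\ref{lem: strong extension of Wand} for complexity-zero uniruled/adjunction contact structures.
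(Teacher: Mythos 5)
Your proposal is correct and follows essentially the same route as the paper: it is the specialization of Corollary~\ref{lem: strong extension of Wand} obtained by combining Lemma~\ref{rmk: uniruled-planar} (the planar cap from \cite{Et04} has $g_s(P)=0$, hence $g_{\max}(P)=g_{\min}(P)=0$) with the bound of Lemma~\ref{cor:strong:bound}, i.e.\ $(e+\sigma)(N)=4-(e+\sigma)(P)$ via uniruledness of the glued manifold and Novikov additivity. The one ingredient you flag as the "main obstacle" --- pinning the base genus of the uniruling to zero independently of the filling --- is exactly what Lemma~\ref{l:genusUpperBound} applied to the positive-square sphere in the cap supplies, so there is no gap.
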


 On the other hand, we  construct various uniruled/adjunction contact manifolds with positive complexity.
For many of them, we also construct the open books supporting the concave boundary.
Contrary to Corollary \ref{cor:Wand}, $e+\sigma$ of fillings of some of these examples is not  constant even for Stein fillings
(see Example~\ref{ex:e+sign}).
Similar to the fact that planar contact manifolds are uniruled and adjunction of complexity zero,
we suspect that spinal planar contact manifolds \cite{LVhmW}, where the base genus of the  spine is allowed to be positive,  have a close relation to uniruled/adjunction contact manifolds with positive complexity.
On the other hand, it would be  interesting to explore the relation between uniruled/adjunction contact manifolds and  the results in \cite{We13}, \cite{We13-2} and \cite{We14}.

The rest of the paper is organized as follows.
Section \ref{s:Preliminary} reviews some knowledge of uniruled/CY manifolds, the relative de Rham theory and $J$-holomorphic neck-stretching.
Theorem \ref{t:CY} and Theorem \ref{t:lagrangian} are proved in Section \ref{s:CY}.
We also construct many Calabi-Yau caps with explicit open book description as well as some plumbing Calabi-Yau caps.
The proof of Theorem \ref{t:uniruled} together with its adjunction analogue are contained in Section \ref{s:UniAdj}.
We introduce genus invariants and the complexity invariant for uniruled/adjunction caps as well as illustrating its relation to planar contact manifolds and give
explicit topological bounds for fillings based on these invariants in Subsection \ref{ss:genusBound}.
We end with some explicit constructions of  uniruled/adjunction caps.

\subsection*{Acknowledgements}
 The authors are grateful to R. Inanc Baykur for his interest in our work and sharing with us his results in \cite{BaMoVHM14} with Naoyuki Monden and Jeremy Van Horn-Morris.
They also thank Anar Akhmedov for helpful discussions, Chris Wendl for informing them about Conjecture \ref{cj:lagrangian}
and an anonymous referee for a careful reading of the first version.
The third author would like to thank Selman Akbulut for fruitful conversations.
He also thanks Michigan State university and University of Minnesota, where this work was partly done, for their hospitality.
This is a substantially revised version of the one posted on arXiv in December, 2014.
The  second author gave a talk about the new results in this version at the Harvard Gauge theory seminar and
appreciates the discussion with Daniel Cristofaro-Gardiner and Peter Kronheimer.
Finally, the authors are very grateful to the referee whose numerous suggestions have greatly improved the presentation of this paper.

\section{General discussion}\label{s:Preliminary}

\subsection{Uniruled manifolds and Calabi-Yau manifolds}\label{ss:uniruledCY}

Let $X$ be a closed, oriented smooth 4-manifold.
Let ${\mathcal E}_X$ be the set of cohomology
classes whose Poincar\'e duals are represented by smoothly embedded
spheres of self-intersection $-1$. $X$ is said to be (smoothly)
minimal if ${\mathcal E}_X$ is the empty set.
% \end{definition}
Equivalently, $X$ is minimal if it is not the connected sum of
another manifold  with $\overline{\mathbb {CP}^2}$.

\medskip
Suppose $\omega$ is a symplectic form compatible with the orientation.
$(X,\omega)$  is said to be (symplectically) minimal if ${\mathcal
E}_{\omega}$ is empty, where
$${\mathcal E}_{\omega}=\{E\in {\mathcal
E}_X|\hbox{ $E$ is represented by an embedded $\omega-$symplectic
sphere}\}.$$
We say that $(Z, \tau)$
is a minimal model of $(X, \omega)$ if $(Z, \tau)$ is  minimal and $(X, \omega)$ is
a symplectic blow up of $(Z, \tau)$.
A basic fact proved using Taubes' SW theory (\cite{Taubes}, \cite{LiLiu}, \cite{Li99})  is:
  ${\mathcal E}_{\omega}$ is
empty if and only if ${\mathcal E}_X$ is empty. In other words, $(X,
\omega)$ is symplectically minimal if and only if $X$ is smoothly
minimal.

%\begin{definition}\label{sym Kod'}
For minimal $(X,\omega)$,  the Kodaira dimension of
$(X,\omega)$
 is defined in the following way in \cite{Li06s} (see also \cite{McSa96} \cite{Le97}):

$$
\kappa^s(X,\omega)=\begin{cases} \begin{array}{lll}
-\infty & \hbox{ if $K_{\omega}\cdot [\omega]<0$ or} & K_{\omega}\cdot K_{\omega}<0,\\
0& \hbox{ if $K_{\omega}\cdot [\omega]=0$ and} & K_{\omega}\cdot K_{\omega}=0,\\
1& \hbox{ if $K_{\omega}\cdot [\omega]> 0$ and} & K_{\omega}\cdot K_{\omega}=0,\\
2& \hbox{ if $K_{\omega}\cdot [\omega]>0$ and} & K_{\omega}\cdot K_{\omega}>0.\\
\end{array}
\end{cases}
$$
%\end{definition}
Here $K_{\omega}$ is defined as the first Chern class of the
cotangent bundle for any almost complex structure compatible with
$\omega$.

The invariant $\kappa^s$ is well defined  since there does not
exist a minimal $(X, \omega)$ with $$K_{\omega}\cdot [\omega]=0,
\quad\hbox{and}\quad  K_{\omega}\cdot K_{\omega}>0.$$
This again follows from Taubes' SW theory \cite{Li06s}.
Moreover,
 $\kappa^s$ is independent of $\omega$, so it is an oriented diffeomorphism invariant of $X$.

 The Kodaira dimension of a non-minimal manifold is defined to be
that of any of its minimal models.
This definition is well-defined and independent of choice of minimal model so $\kappa^s(X, \omega)$ is well-defined for any $(X, \omega)$ (cf. \cite{Li06s}).

\begin{defn}
 Let $(X,\omega)$ be a not necessarily minimal closed symplectic four manifold.
 We call $(X,\omega)$ a {\bf symplectic Calabi-Yau surface} (resp. {\bf symplectic uniruled manifold}) if $\kappa^s(X, \omega)=0$ (resp. $\kappa^s(X, \omega)= -\infty$).
\end{defn}

We sometimes simply call a symplectic Calabi-Yau surface a Calabi-Yau surface and a symplectic uniruled manifold a uniruled manifold.
A minimal symplectic Calabi-Yau surface has torsion first Chern class.
The first author proved in \cite{Li06} the following theorem for symplectic Calabi-Yau surfaces (cf. also \cite{Ba08}).

\begin{thm}[Theorem 1.1 of \cite{Li06}]\label{t:CYhomologyBound}
 If $(X,\omega)$ is a minimal symplectic Calabi-Yau surface, then its rational homology is the same as that of the $K3$ surface, the Enriques surface or a torus bundle over torus.
\end{thm}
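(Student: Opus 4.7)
The plan is to combine the defining numerical conditions of a symplectic Calabi--Yau surface with Taubes' identification of Seiberg--Witten invariants and Gromov invariants to pin down the rational cohomology of $X$ and match it against the listed models.

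First I would show that $c_1(X)$ is rationally trivial.  Since $X$ is minimal and $\kappa^s(X,\omega)=0$, the definition gives $K_\omega^2=0$ and $K_\omega\cdot[\omega]=0$.  Because $[\omega]^2>0$, Poincar\'e duality gives a non-degenerate intersection form on $H^2(X;\mathbb{R})$, and the Hodge index theorem (light cone lemma) implies that the intersection form is negative definite on the $\omega$-orthogonal complement; any class lying in that complement with vanishing self-intersection must vanish.  Applied to $K_\omega$, this shows $c_1(X)=-K_\omega$ is torsion.  Combined with Hirzebruch's identity $c_1^2=2e+3\sigma$, this yields the relation $2e+3\sigma=0$, which in Betti numbers becomes $b^-=5b^+-4b_1+4$.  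Hence the rational homology of $X$ is determined by the pair $(b_1,b^+)$.

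It therefore remains to bound $b_1$ and to enumerate the admissible values of $b^+$.  The canonical spin-c structure $\mathfrak{s}_{\mathrm{can}}$ has torsion first Chern class, so its Seiberg--Witten moduli space has virtual dimension zero.  By Taubes' theorem, $SW(\mathfrak{s}_{\mathrm{can}})=\pm 1$ when $b^+\ge 2$, with a wall-crossing correction when $b^+=1$.  The $H^1(X;\mathbb{Z})$-action on the SW moduli space, combined with nonvanishing of the SW count on a zero-dimensional moduli space, forces $b_1(X)\le 4$ (following the structural arguments of Taubes--Li--Liu in the Kodaira dimension zero regime).  Parity constraints on $1-b_1+b^+$, together with the nonnegativity $b^-=5b^+-4b_1+4\ge 0$, cut the remaining possibilities down to a short list of pairs $(b_1,b^+)$.

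Finally I would perform the case analysis.  For $b_1=0$, the pair $b^+=3$ gives the rational homology of $K3$ and $b^+=1$ gives that of the Enriques surface; larger values of $b^+$ are excluded by the SW nonvanishing and parity.  For $b_1\in\{2,3,4\}$, a short argument shows $e=0$, hence $\sigma=0$ and $b^+=b_1-1$, matching the rational homology of a torus bundle over a torus.  The main obstacle is the bound $b_1\le 4$, which requires a delicate analysis of the $H^1(X;\mathbb{Z})$-action on the SW moduli space, and, in the $b^+=1$ regime, an accurate wall-crossing computation to keep track of signs; with these in hand, the remaining case-by-case matching is largely bookkeeping.
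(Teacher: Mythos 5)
The paper does not actually prove this statement: it is quoted verbatim as Theorem 1.1 of \cite{Li06} (see also \cite{Ba08}), so there is no internal proof to compare against. Judged on its own, your outline has the right skeleton (torsion $c_1$, the relation $2e+3\sigma=0$ reducing everything to the pair $(b_1,b^+)$, then an enumeration), but the two steps carrying all the weight do not work as stated. The light-cone argument for $c_1$ being torsion is only valid when $b^+=1$: the $\omega$-orthogonal complement in $H^2(X;\mathbb{R})$ has signature $(b^+-1,b^-)$ and is negative definite only in that case, and the K3 surface itself has $b^+=3$, so this is not a peripheral issue. For $b^+\ge 2$ one instead uses Taubes' $SW=Gr$: $Gr(K_\omega)=\pm 1$ forces $K_\omega$ to be represented by a $J$-holomorphic curve, and $K_\omega\cdot[\omega]=0$ then forces that curve to be empty, so $K_\omega$ is torsion.

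The more serious gap is the enumeration. Parity of $1-b_1+b^+$ together with $b^-=5b^+-4b_1+4\ge 0$ leaves infinitely many admissible triples, e.g. $(b_1,b^+,b^-)=(0,5,29)$, $(1,2,10)$, $(2,3,11)$, none of which is excluded by ``SW nonvanishing and parity''; and your claim that for $b_1\in\{2,3,4\}$ a short argument gives $e=0$ already fails for $(2,3,11)$, which has $e=12$ and satisfies $2e+3\sigma=0$. Ruling out these cases --- equivalently, proving $b^+\le 3$ together with the correct coupling between $b_1$ and $b^+$ --- is precisely the content of \cite{Li06} and \cite{Ba08}: it rests on the quaternionic ($Pin(2)$-equivariant) structure of the Seiberg--Witten equations for a spin-c structure with torsion first Chern class, a Furuta-type argument, not on the ordinary $H^1(X;\mathbb{Z})$-action on the moduli space or on wall-crossing bookkeeping. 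As written, your proposal assumes the hard part rather than proving it.
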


\begin{rmk}\label{r:integralK3}
If the first integral homology of a rational homology $K3$ symplectic Calabi-Yau surface is not trivial, it admits a finite cover which is also a symplectic Calabi-Yau surface but with Euler characteristic larger than that of the $K3$ surface. Hence, Theorem \ref{t:CYhomologyBound} implies that a  symplectic Calabi-Yau surface  with the  rational homology of the  $K3$ surface is an integral homology $K3$ surface.
\end{rmk}

For uniruled manifolds, Liu and independently Ohta and Ono proved the following smooth classification.
For the symplectic classification, see also \cite{Mc90}.

\begin{thm}[\cite{Liu96} or \cite{OhOn96}]\label{t:Uniruled}
 If $(X,\omega)$ is a minimal uniruled manifold, $X$ is $\mathbb {CP}^2, S^2\times S^2$ or an $S^2-$bundle over a Riemann surface
of positive genus.
\end{thm}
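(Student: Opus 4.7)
The plan is to reduce the classification to McDuff's theorem that any closed symplectic 4-manifold containing a symplectically embedded sphere of non-negative self-intersection is a (possibly blown up) $\mathbb{CP}^2$, $S^2\times S^2$, or $S^2$-bundle over a Riemann surface. Since the minimality hypothesis rules out any $\overline{\mathbb{CP}^2}$ summand, the stated list of diffeomorphism types would follow immediately once such a sphere is produced. So the main task is to exhibit an embedded symplectic sphere of non-negative self-intersection inside a minimal $(X,\omega)$ with $\kappa^s(X,\omega)=-\infty$.

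My first step is to show $b^+(X)=1$. Indeed, if $b^+(X)\geq 2$, Taubes' theorem gives $SW_X(K_\omega)=\pm 1$, so by the Taubes SW $=$ Gr equivalence the class $K_\omega$ is represented by a $J$-holomorphic curve $C$; then $K_\omega\cdot[\omega]=\int_C\omega\geq 0$, and minimality together with $b^+\geq 2$ also forces $K_\omega^2\geq 0$ (this is exactly the fact cited in the paper to show $\kappa^s$ is well-defined). Both inequalities together contradict $\kappa^s=-\infty$. Hence $b^+(X)=1$.

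In the $b^+=1$ case, I would invoke the Li-Liu wall-crossing formula for Seiberg-Witten / Gromov invariants. Working in the chamber determined by $[\omega]$, the hypothesis $K_\omega\cdot[\omega]<0$ (respectively $K_\omega^2<0$) combined with wall-crossing yields a class $A\in H_2(X;\mathbb{Z})$ with non-vanishing Gromov invariant and $A\cdot[\omega]>0$. Gromov compactness represents $A$ by a (possibly reducible) pseudo-holomorphic curve; positivity of intersections, the adjunction inequality, and the minimality hypothesis (which forbids $(-1)$-sphere components) then single out an irreducible component that is an embedded symplectic sphere of non-negative self-intersection. McDuff's classification applied to this sphere identifies $X$ with one of the three listed types.

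The main obstacle is the careful wall-crossing analysis in the $b^+=1$ case: the SW invariants live in two chambers separated by the wall $c_1\cdot[\omega]=0$, and one must pin down which chamber $[\omega]$ occupies relative to $\pm K_\omega$, and then convert a non-vanishing Gromov invariant into an \emph{irreducible embedded} symplectic sphere component (using positivity of intersections to control the configuration and the adjunction inequality to rule out higher-genus components). Once such a sphere is in hand, McDuff's rational/ruled classification delivers the diffeomorphism type stated in the theorem.
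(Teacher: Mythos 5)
The paper does not prove this statement: it is imported wholesale from Liu \cite{Liu96} and Ohta--Ono \cite{OhOn96} (building on McDuff \cite{Mc90}), so there is no internal proof to compare against. Your outline does follow the strategy of those references --- reduce to McDuff's classification of manifolds containing an embedded symplectic sphere of non-negative square, kill the $b^+\geq 2$ case via Taubes (so that $SW(K_\omega)=\pm1$ forces $K_\omega\cdot[\omega]\geq 0$, and minimality with $b^+\geq 2$ forces $K_\omega^2\geq 0$; note the latter is a separate Taubes-type theorem, not the fact the paper quotes to make $\kappa^s$ well-defined), and then run wall-crossing in the $b^+=1$ case. That skeleton is sound.

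The gap sits exactly in the step you flag as ``the main obstacle,'' and it is more than a technicality. You never specify which class $A$ the wall-crossing formula is applied to, and the natural first candidate $A=-K_\omega$ cannot work as described: by adjunction an irreducible embedded representative of $-K_\omega$ has genus $1+\bigl((-K_\omega)^2+K_\omega\cdot(-K_\omega)\bigr)/2=1$, i.e.\ is a torus, not a sphere (for $\mathbb{CP}^2$ the anticanonical curves are cubics). So the claim that ``positivity of intersections, adjunction and minimality single out an irreducible embedded sphere component of non-negative square'' is false for a generic class with non-vanishing Gromov invariant; producing a sphere requires the specific choices of spin-c structures and dimension counts in Liu's argument, which moreover differ between the two cases $K_\omega\cdot[\omega]<0$ and $K_\omega^2<0$ (in the latter one must ultimately locate the square-zero fiber class $F$ with $K_\omega\cdot F=-2$). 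In addition, Liu's \emph{general} wall-crossing formula is needed precisely because $b_1$ may be positive (the ruled surfaces over positive-genus bases), so the chamber difference is not simply $\pm1$. As written, your proposal is a correct road map to the proof in \cite{Liu96}, but the actual content of that proof --- the class selection, the chamber bookkeeping for $b_1>0$, and the extraction of the embedded sphere --- is left unexecuted.
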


For a minimal uniruled manifold $(X,\omega)$, the base genus is defined to be zero if $X$ is $\mathbb {CP}^2$ or $S^2\times S^2$, otherwise it is defined to be the genus of the base
as an $S^2$-bundle.
For a general uniruled manifold $(X,\omega)$, the base genus is defined to be the base genus of any of its minimal models.

In some sense, Calabi-Yau manifolds and uniruled manifolds capture most of the known rigidity results for closed symplectic four manifolds.
This point of view motivates the definitions of Calabi-Yau caps and uniruled caps to obtain rigidity results for fillings in this paper.

We end this subsection with the following lemmas.

\begin{lemma}\label{l:genusUpperBound}
 Let $(X,\omega)$ be a uniruled manifold and $u:\Sigma_g \to X$ be a continuous map from an oriented surface of genus $g$ to $X$ with $(u_*[\Sigma_g])^2>0$.
 Then the base genus of $X$ is less than or equal to $g$.
\end{lemma}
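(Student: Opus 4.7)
The plan is to first reduce to the case when $X$ is minimal, and then use the explicit classification of minimal uniruled manifolds provided by Theorem \ref{t:Uniruled}.

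For the reduction, let $(Z,\tau)$ be a minimal model of $(X,\omega)$, so $X$ is obtained from $Z$ by a sequence of symplectic blow-ups, with exceptional classes $E_1, \dots, E_k$. There is a continuous collapsing map $\pi : X \to Z$ that sends each exceptional sphere to a point. With respect to the orthogonal decomposition $H_2(X;\mathbb{Z}) = H_2(Z;\mathbb{Z}) \oplus \bigoplus_i \mathbb{Z}\langle E_i \rangle$, write $u_*[\Sigma_g] = \alpha + \sum_i n_i E_i$ with $\alpha \in H_2(Z)$. Since $E_i^2 = -1$ and the decomposition is orthogonal, $\alpha^2 = (u_*[\Sigma_g])^2 + \sum_i n_i^2 > 0$, while $(\pi \circ u)_*[\Sigma_g] = \alpha$. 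Since the base genus of $X$ equals the base genus of $Z$ by definition, we may replace $(X,u)$ with $(Z,\pi \circ u)$ and henceforth assume $X$ is minimal.

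Now apply Theorem \ref{t:Uniruled}. If $X$ is $\mathbb{CP}^2$ or $S^2 \times S^2$, the base genus is zero and the inequality is trivial. Otherwise $X$ is an $S^2$-bundle $p : X \to \Sigma_h$ over a surface of genus $h \geq 1$. Then $H_2(X)$ has rank two, generated by the fiber class $F$ (with $F^2 = 0$) and a section class $S$ (with $F \cdot S = 1$). Write $u_*[\Sigma_g] = aF + bS$; the condition $(u_*[\Sigma_g])^2 = 2ab + b^2 S^2 > 0$ forces $b \neq 0$. Since $p_* F = 0$ and $p_* S = [\Sigma_h]$, the composition $p \circ u : \Sigma_g \to \Sigma_h$ has nonzero degree $b$.

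To conclude, I invoke the classical fact that any continuous map of nonzero degree $\Sigma_g \to \Sigma_h$ with $h \geq 1$ satisfies $g \geq h$: the induced pullback on $H^1$ preserves the cup-product pairing up to multiplication by the degree, so it is injective since the pairing on $H^1(\Sigma_h)$ is nondegenerate, giving $2g \geq 2h$. I do not anticipate any serious obstacle; the main point to verify carefully is the behavior of $\pi_*$ on $H_2$ under the blow-down reduction and the orthogonality of the decomposition, both of which are standard properties of symplectic blow-ups.
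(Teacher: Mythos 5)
Your proof is correct and follows essentially the same strategy as the paper's: positivity of the self-intersection forces $u_*[\Sigma_g]$ to have nonzero component in the section direction, that component is precisely the degree of the composition with the projection to the base, and the classical fact that a nonzero-degree map $\Sigma_g \to \Sigma_h$ requires $g \ge h$ finishes the argument. The only cosmetic difference is the direction of the reduction: you blow down to a minimal model and invoke the classification in Theorem \ref{t:Uniruled}, while the paper arranges for $X$ to be non-minimal and uses that a non-minimal irrational ruled surface is diffeomorphic to a blow-up of the product $S^2 \times \Sigma_h$, so that the projection to the base is manifest.
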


\begin{proof}
 Without loss of generality, we can assume that $X$ is not of genus $0$.
 We further  assume that $X$ is not minimal since blowing-down does not change the base genus.
 Since $X$ is not minimal,  it can be obtained from $r$-times blow-ups of a product manifold $S^2 \times \Sigma_h$ for some genus $h$ surface \cite[Section 1.2.1]{FrMo}.
 Let $f$ be the class of $[S^2 \times p]$ for some $p \in \Sigma_h$ and $s$ the class of $[p \times \Sigma_h]$ for some $p \in S^2$.
  Notice that,  as a smooth manifold $X$ admits a smooth projection $\pi: X \to \Sigma_h$ with spherical fibers in the class $f$.
 We will show that  the induced  projection $\pi \circ u: \Sigma_g \to \Sigma_h$ has non-zero degree and hence $g \ge h$.

 By abuse of notation, $H_2(X,\mathbb{Z})$ is generated by $f,s,e_1,\dots,e_r$, where the $e_i$ are the classes of the exceptional spheres.
 Since $(a_0 f+ a_1 e_1+\dots+a_re_r)^2 \le 0$ for all $a_i \in \mathbb{Z}$,
 $u_*[\Sigma_g]$ has non-zero coefficient in $s$ when written as a linear combination over the basis $\{f,s,e_1,\dots,e_r\}$.
 By observing that  this non-zero coefficient is precisely the degree of
 the map $\pi \circ u$, we conclude that $h \le g$.

 %Finally, when $X$ is minimal, $X$ is a sphere bundle over a closed Riemann surface.
 %The argument works similarly.
\end{proof}

\begin{lemma}[Proposition $3.14$ of \cite{LiZh09}]\label{l:weiyi}
 Let $(X,\omega)$ be a non-minimal uniruled manifold and $D$ a symplectic submanifold with positive genus.
 If $[D] \cdot e > 0$ for all exceptional classes $e$ in $X$, then $(K_{\omega}+[D])^2 \ge 0$.
\end{lemma}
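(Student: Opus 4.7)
The plan is to reduce the inequality to a computation on the minimal model of $(X,\omega)$. Fix a sequence of symplectic blow-downs $\pi : (X, \omega) \to (X_{\min}, \omega_{\min})$ collapsing exceptional classes $E_1, \dots, E_r$; by Theorem~\ref{t:Uniruled}, $X_{\min}$ is either $\mathbb{CP}^2$, $S^2 \times S^2$, or an $S^2$-bundle over a surface $\Sigma_h$. Using the standard formulas $K_\omega = \pi^* K_{\min} + \sum_i E_i$ and $[D] = \pi^*[D_{\min}] - \sum_i m_i E_i$, where $m_i := [D]\cdot E_i$ is strictly positive by hypothesis, a direct computation using $E_i \cdot E_j = -\delta_{ij}$ and the orthogonality of $\pi^*$-classes with the $E_i$ yields the key identity
\begin{equation*}
(K_\omega + [D])^2 \;=\; (K_{\min} + [D_{\min}])^2 - \sum_i (m_i-1)^2.
\end{equation*}
Hence it suffices to prove $(K_{\min} + [D_{\min}])^2 \ge \sum_i (m_i-1)^2$ on the minimal model.

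The crucial remaining ingredient is the full strength of the positivity hypothesis: it applies to \emph{every} exceptional class in $X$, not merely the chosen $E_i$. Proper transforms of lines through pairs of blow-up points in the rational case (for example $H - E_i - E_j$ on a blow-up of $\mathbb{CP}^2$) or of fibers through a blow-up point in the ruled case ($f - E_i$) are themselves exceptional classes of $X$, and evaluating positivity against them translates into lower bounds such as $[D_{\min}]\cdot H > m_i + m_j$ or $[D_{\min}]\cdot f > m_i$ in terms of the multiplicities $m_i$.

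The proof then proceeds by case analysis on $X_{\min}$. In each model one expands $[D_{\min}]$ in the standard basis $(H$ for $\mathbb{CP}^2$; $s, f$ for $S^2\times S^2$ or the ruled case$)$ and combines the positivity bounds above with the adjunction formula applied to $D$, which after the substitution contributes the nonnegative term $4(g(D)-1)$; this is precisely where the hypothesis $g(D) \ge 1$ enters the argument. The main obstacle I anticipate is the $S^2$-bundle case with base genus $h \ge 2$, where $K_{\min}^2 = 8-8h$ is very negative. There one must use the positivity of $[D]$ against \emph{all} fiber-like exceptional classes simultaneously, in combination with the positive-genus contribution of $D$, in order to dominate $\sum_i (m_i-1)^2$ together with the negativity of $K_{\min}^2$; this is the delicate quantitative step of the argument.
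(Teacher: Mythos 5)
The paper does not actually prove Lemma \ref{l:weiyi}: it is imported verbatim as Proposition 3.14 of \cite{LiZh09}, so there is no internal argument to measure your proposal against and it has to stand on its own.

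Your reduction is set up correctly: the blow-up formulas give the identity $(K_\omega+[D])^2=(K_{\min}+[D_{\min}])^2-\sum_i(m_i-1)^2$, the hypothesis applied to the $E_i$ gives $m_i\ge 1$, and applied to the remaining exceptional classes it gives the constraints you list (e.g.\ $[D_{\min}]\cdot f>m_i$ from $f-E_i$ in the ruled case). The genuine gap is that the inequality you reduce to, $(K_{\min}+[D_{\min}])^2\ge\sum_i(m_i-1)^2$, \emph{is} the entire content of the lemma, and you never prove it: you describe which ingredients should enter and then explicitly defer the ``delicate quantitative step'' in the case you yourself identify as hardest. As written, the argument establishes nothing beyond the standard blow-up identity. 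The plan does appear to be completable --- for instance, over a trivial $S^2$-bundle on $\Sigma_h$ with $h\ge 1$, writing $[D_{\min}]=as+bf$ with $s^2=f^2=0$ and $s\cdot f=1$, the constraints $1\le m_i\le a-1$ give $\sum_i(m_i-1)^2\le(a-2)\sum_i(m_i-1)$, and substituting the adjunction identity $2b(a-1)+(2h-2)a=(2g-2)+\sum_i m_i(m_i-1)$ into $(K_{\min}+[D_{\min}])^2=2(a-2)(b+2h-2)$ yields the required bound after a short manipulation, so the irrational ruled case you worried about actually closes cleanly. The rational case needs a separate and more intricate verification because of the larger supply of exceptional classes $H-E_i-E_j$, $2H-E_{i_1}-\cdots-E_{i_5}$, and so on. Until those computations are actually carried out, what you have is an outline rather than a proof; the honest alternative is to do what the paper does and cite \cite{LiZh09}.
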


%\begin{proof}
% If $[D]$ represents a section class and $[D] \cdot e > 0$ for all exceptional classes $e$ in $X$, then $X$ is minimal.
% Therefore, we can assume $[D]$ is not a section class.
% The result follows from Proposition $3.14$ of \cite{LiZh09}.
%\end{proof}

\subsection{Relative cohomology pairing}\label{ss:RelativePairing}

In this subsection, we recall the relative de Rham theory and illustate the well-definedness of several pairings (see eg. \cite{BottTu}).

Given a smooth manifold with boundary $X$, the relative cochains $\mathcal{C}_k$ consist of pairs $(\beta,\alpha)$, where $\beta$ is a $k$-form on $X$ and $\alpha$ is a $(k-1)$-form on $\partial X$.
The differential $d$ is defined as $d(\beta,\alpha)=(d\beta,\beta|_{\partial X}-d\alpha)$.
It is easy to see that $d \circ d=0$ and it forms a cohomology isomorphic to the usual relative cohomology $H^*(X,\partial X; \mathbb{R})$.

This formulation of relative cohomology can be translated to compactly supported cohomology of $X-\partial X$.
For $(\beta,\alpha)$ a cochain in $\mathcal{C}_k$, we consider a collar $(0,1] \times \partial X$ of $\partial X$ in $X$.
We choose a cutoff function $\chi : (0,1] \times \partial X \to \mathbb{R}$.
We want $\chi(r,x)=\chi(r)$ for $(r,x) \in (0,1] \times \partial X$ such that $\chi(r)=0$ near $r=0$ and $\chi(r)=1$ near $r=1$.
Extending by $0$, $\chi \alpha$ is a $k-1$-form on $X$ which we also denote as $\alpha^c$.
Then $(\beta,\alpha)-d(\chi \alpha,0)=(\beta-d(\chi \alpha),0)$ is another chain level representative of $[(\beta,\alpha)]$ which has compact support in $X-\partial X$.
One can show that this translation induce an isomorphism from relative cohomology $H^k(X,\partial X, \mathbb{R})$ to the compactly supported cohomology $H^k_{cpt}(X-\partial X, \mathbb{R})$.

We assume that $X$ has dimension $4$ and is connected, oriented.
Consider the following pairing:
$$H^2(X;\mathbb{R})\times H^2(X,\partial X; \mathbb{R})\to H^4(X,\partial X; \mathbb{R})=\mathbb R$$
$$([A],    [(B, b)])\to  \int_X A \wedge B -\int_{\partial X} A \wedge b,$$
where the integral on $\partial X$ is taken with the Stokes boundary orientation.
To see it is independent of $A$, we check that by Stokes Theorem applied to $X$,
$$\int_X du \wedge B -\int_{\partial X} du  \wedge b= \int_X d(du\wedge b)-\int_{\partial X} du \wedge b =0.$$
To see it is independent of $(B, b)$, we check that by Stokes Theorem applied to both $X$ and $\partial X$,
$$\int_X A\wedge d\beta-\int_{\partial X} A \wedge (\beta-d\alpha)=\int_{\partial X} A\wedge d\alpha= \int_{\partial X} d(A\wedge \alpha)=0.$$
Notice that, this pairing is translated from the usual pairing bewteen
cohomololgy and compactly supported cohomology (see eg. \cite{BottTu}).
By Lefschetz duality, this pairing is non-degenerate.

Consider the following portion of the long exact sequence
\begin{equation} \label{rel seq} \cdots H^1(\partial X;\mathbb R)\stackrel{\partial} {\to} H^2(X,\partial X; \mathbb{R})\stackrel{f}{\to} H^2(X;\mathbb R)\stackrel{r}{\to}
H^2(\partial X;\mathbb R)\cdots,
\end{equation}
where at the form level,  $f$ sends $(A, a)$ to $A$,  and $r$ sends $B$ to $B|_{\partial X}$. Via the forgetful map $f$, we also have
the pairing
$$H^2(X,\partial X; \mathbb{R})\times H^2(X,\partial X; \mathbb{R})\to H^4(X,\partial X; \mathbb{R})=\mathbb R.$$
However, this pairing in general has a kernel.

\begin{lemma}\label{rel rel}
The pairing $[(A, a)] \cdot [(B, b)]$ is independent  of $a$ and $b$.
\end{lemma}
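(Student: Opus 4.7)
The plan is to exploit the explicit pairing formula derived just above in Subsection~\ref{ss:RelativePairing}, namely
$$[(A,a)] \cdot [(B,b)] \;=\; [A] \cdot [(B,b)] \;=\; \int_X A \wedge B \;-\; \int_{\partial X} A \wedge b,$$
and to verify directly that neither $a$ nor $b$ affects the resulting number, using only that both pairs are closed cochains in $\mathcal{C}_2$.

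First, independence of $a$ is automatic, since $a$ does not appear in the formula at all.

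Second, for independence of $b$, I would compare two closed representatives $(B,b)$ and $(B,b')$ with the same top form $B$. Closedness of both pairs forces $db = B|_{\partial X} = db'$, so $c := b - b'$ is a closed $1$-form on $\partial X$. The difference of the two pairing values reduces to $-\int_{\partial X} A \wedge c$. Using closedness of $(A,a)$, which gives $A|_{\partial X} = da$, together with $dc=0$, I would rewrite
$$\int_{\partial X} A \wedge c \;=\; \int_{\partial X} da \wedge c \;=\; \int_{\partial X} d(a \wedge c) \;=\; 0$$
by Stokes' theorem on the closed $3$-manifold $\partial X$.

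The essential point is that the two closedness conditions interlock perfectly: $(A,a)$ being closed produces an exact boundary restriction for $A$, while $(B,b)$ and $(B,b')$ both being closed with the same $B$ forces any admissible variation of $b$ to be closed on $\partial X$. These are exactly the inputs needed to convert the offending boundary integrand into an exact form on $\partial X$ and apply Stokes. No substantive obstacle arises beyond this bookkeeping; the lemma is a direct consequence of how the closed-pair conditions match up under the boundary pairing.
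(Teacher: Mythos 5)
Your proof is correct and follows essentially the same route as the paper's: both reduce to the observation that $b-b'$ is closed and $A|_{\partial X}=da$, so $\int_{\partial X} A\wedge(b-b')=\int_{\partial X} d(a\wedge(b-b'))=0$ by Stokes on $\partial X$. Your explicit remark that $a$-independence is immediate because $a$ does not appear in the formula is a slightly cleaner way of phrasing what the paper dismisses with "the same argument applies."
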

\begin{proof} If $b'$ is another primitive of $B$, then $b-b'$ is closed. Hence
$$\int_{\partial X}  A \wedge (b-b')=\int_{\partial X} da \wedge (b-b')=0.$$
The same argument applies to the choice of $a$.
\end{proof}

The kernel is contained in the image of the boundary homomorphism $\partial: H^1(\partial X;\mathbb R)\to H^2(X,\partial X; \mathbb{R})$ by the non-degeneracy of the pairing $H^2(X;\mathbb{R})$ and $H^2(X,\partial X; \mathbb{R})$.
Actually, the kernel is exactly the image because it is easy to see that it pairs everything to be zero.

From now on, we use $P$ or $N$ instead of $X$ to denote a manifold with boundary, depending on whether it is a cap or a filling.
The following simple lemma is the key to relate the caps with the closed manifold.

\begin{lemma}\label{l:splittingCohomologyPairing}
 Let $(P,\omega_P)$ be a symplectic cap of $(Y,\xi)$ with a Liouville one form $\alpha_P$,
 and $(N,\omega_N)$ be a symplectic filling of $(Y,\xi)$ with a Liouville  one form $\alpha_N$.
 Let $X=N \cup_Y P$ which is a closed manifold.
 Then for sufficiently large $t>0$, there is a symplectic form $\omega$ on $X$
 such that $c_1(X) \cdot \omega = c_1(N)\cdot[(\omega_N,\alpha_N)]+c_1(P)\cdot t[(\omega_P,\alpha_P)]$
 and $\omega|_N=\omega_N$.
\end{lemma}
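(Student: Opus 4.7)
The plan is to build $\omega$ on $X=N\cup_Y P$ by inserting a ``neck'' $[0,L]\times Y$ modeled on the symplectization of $(Y,\xi)$ between $N$ and $P$. Using the Liouville vector fields of $N$ and $P$, I first identify collars $(-\epsilon,0]\times Y\subset N$ on which $\omega_N=d(e^s\alpha_N|_Y)$ and $[0,\epsilon)\times Y\subset P$ on which $\omega_P=d(e^s\alpha_P|_Y)$. Since $\alpha_N|_Y$ and $\alpha_P|_Y$ both define $\xi$ as positive contact forms, I can write $\alpha_P|_Y=h\,\alpha_N|_Y$ for a strictly positive smooth function $h$ on $Y$.

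On the neck, I set $\omega_t=d(u(s,y)\alpha_N|_Y)$ for a positive smooth function $u$ with $\partial_s u>0$, chosen so that $u=e^s$ near $s=0$ and $u=t\,h(y)\,e^{s-L}$ near $s=L$. With these matching conditions, $\omega_t$ glues smoothly with $\omega_N$ on $N$ and with $t\omega_P$ on $P$. A short computation using $d\alpha_N|_Y\wedge d\alpha_N|_Y=0$ gives
$$(d(u\alpha_N|_Y))^2=2u(\partial_s u)\,ds\wedge\alpha_N|_Y\wedge d\alpha_N|_Y,$$
which is a positive volume form precisely when $u>0$ and $\partial_s u>0$. The required interpolation exists as soon as $t\cdot\min_Y h>e^{2\delta}$ for a small cutoff buffer $\delta>0$, i.e.\ for all $t$ sufficiently large; this produces the desired symplectic form $\omega_t$ on $X$ with $\omega_t|_N=\omega_N$ and $\omega_t|_P=t\omega_P$.

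To verify the cohomological identity, I choose an $\omega_t$-compatible almost complex structure on $X$ whose restrictions to $N$ and $P$ are compatible with $\omega_N$ and $t\omega_P$, and use its Chern--Weil form $c$ as a representative of $c_1(X)$. Splitting $\int_X c\wedge\omega_t$ into contributions from $N$, the neck, and $P$: by the defining formula for the relative pairing, the $N$-integral equals $c_1(N)\cdot[(\omega_N,\alpha_N)]+\int_{\partial N}c\wedge\alpha_N$ and the $P$-integral equals $t\,c_1(P)\cdot[(\omega_P,\alpha_P)]+\int_{\partial P}c\wedge t\alpha_P$, while Stokes' theorem applied to $\int_{\text{neck}}c\wedge d(u\alpha_N|_Y)$ gives $\int_Y c\wedge t\alpha_P-\int_Y c\wedge\alpha_N$ with suitable orientations. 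An orientation check shows that the two components $\{0\}\times Y$ and $\{L\}\times Y$ of the neck boundary carry orientations opposite to those of $\partial N$ and $\partial P$ respectively, so the four boundary contributions cancel in pairs, leaving $c_1(N)\cdot[(\omega_N,\alpha_N)]+t\,c_1(P)\cdot[(\omega_P,\alpha_P)]$ as required.

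The substantive technical work is arranging the interpolating function $u$, where the dependence of $h$ on $y$ forces one to take $t$ large in order to accommodate the variation within a single conformal factor; the cohomological identity is then a Stokes' theorem exercise whose only subtle step is bookkeeping the three induced boundary orientations consistently.
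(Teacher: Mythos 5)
Your proof is correct and follows essentially the same route as the paper's: both glue $N$ to $(P,t\omega_P)$ for large $t$ through a neck modeled on the symplectization of $(Y,\alpha_N)$ (your interpolating conformal factor $u$ is just a reparametrization of the paper's region $\{1\le e^r\le f_{t\alpha_P}(x)\}$), and both reduce the identity $c_1(X)\cdot[\omega]=c_1(N)\cdot[(\omega_N,\alpha_N)]+c_1(P)\cdot t[(\omega_P,\alpha_P)]$ to a cancellation of boundary terms over $Y$. The only cosmetic difference is in the final bookkeeping: the paper subtracts $d$ of a cutoff of the global primitive $\alpha_{SY}$ on the neck, obtaining compactly supported representatives of the two relative classes, whereas you apply Stokes' theorem on the neck directly and track the three induced orientations; both are valid.
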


\begin{proof}
 We identify $(\partial N, \ker (\alpha_N))$ and $(\partial P, \ker (\alpha_P))$ by a contactomorphism $\Phi$.
 There is a global positive function $f_{\alpha_P}:\partial N \to \mathbb{R}$ such that $\Phi^* \alpha_P =f_N \alpha_N$.
 When $t >0$ is large, $\Phi^* t\alpha_P =f_{t \alpha_P} \alpha_N$ is such that $f_{t \alpha_P}(x) >1$ for all $x \in \partial N$.
 We fix such a choice of $t$.
 Consider the symplectization $(\mathbb{R} \times \partial N, d(e^r\alpha_N), e^r\alpha_N)$ where $r \in \mathbb{R}$.
 Let $SY=\{(r,x) \in \mathbb{R} \times \partial N| 1 \le e^r \le f_{t \alpha_P}(x) \}$
 We equip it with the restricted symplectic form and one form and call it $(SY,\omega_{SY},\alpha_{SY})$.
 We can glue $(N,\omega_N,\alpha_N)$ and $(P,t\omega_P,t\alpha_P)$ by inserting
 $(SY,\omega_{SY},\alpha_{SY})$, see \cite{Et98}.
 Notice that $\alpha_{SY}$ is a globally defined primitive of $\omega_{SY}$ on $SY$ and coincides with $\alpha_N$ and $t\alpha_P$ on its two boundary components, respectively.
Let the resulting manifold be $(X,\omega)$, which is the union of $N$, $SY$ and $P$.
By multiplying a cutoff function and by abuse of notation, we can extend $\alpha_{SY}$ to be a one form supported in a neighborhood of $SY \subset X$ such
that $\alpha_{SY}|_N=\alpha_N^c$ and $\alpha_{SY}|_P=t\alpha_P^c$, where $\alpha_N^c$ and $t\alpha_P^c$ are defined as in the third paragraph of this subsection.
Therefore, we have
\begin{eqnarray*}
c_1(X)\cdot[\omega] & =& c_1(X)\cdot[\omega-d\alpha_{SY}]\\
& =&c_1(X)\cdot[\omega_N-d\alpha_N^c]+c_1(X)\cdot t[\omega_P-d\alpha_P^c] \\
& =&c_1(N)\cdot[(\omega_N,\alpha_N)]+c_1(P)\cdot t[(\omega_P,\alpha_P)]
\end{eqnarray*}
which is simply the sum of the pairings from the cap and the filling.
This is clearly true for any $t$ sufficiently large.
It completes the proof.
\end{proof}

The following properties are also useful.

\begin{lemma}\label{l:relCohProperties}
 Let $(P,\omega)$ be a symplectic cap and $\alpha$ a choice of Liouville one form.
 Let $\Sigma \subset P$ be a compact embedded surface with boundary $\partial \Sigma \subset \partial P$.
 Then the followings are true.
 \begin{itemize}
  \item $[(\omega,\alpha)]^2 > 0$,
  \item $[\Sigma] \cdot PD([(\omega,\alpha)])=\int_{\Sigma} \omega -\int_{\partial \Sigma} \alpha$, where $\partial \Sigma$ is equipped with the Stokes orientation.
  \item if $c_1(P) \cdot [(\omega,\alpha)] =0$ and $c_1(P) \neq 0 \in H^2(P,\mathbb{R})$, then there is a small perturbation $(\omega',\alpha')$ of $(\omega,\alpha)$
  such that $(P,\omega')$ is a cap, $\alpha'$ is a Liouville one form of $\omega'$ and $c_1(P) \cdot [(\omega',\alpha')] >0$.
 \end{itemize}
\end{lemma}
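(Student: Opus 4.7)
The proof separates into three parts. For the first bullet, the plan is to apply the forgetful map and compute $[(\omega,\alpha)]^2 = [\omega] \cdot [(\omega,\alpha)] = \int_P \omega \wedge \omega - \int_{\partial P} \omega \wedge \alpha$. Since $\alpha$ is Liouville near $\partial P$, one has $d\alpha = \omega$ in a collar, so the boundary integrand equals $\alpha \wedge d\alpha$. The key observation is its sign: because the Liouville vector field $V$ of a cap points inward, the outward normal is $-V$, and from $\iota_V(\omega^2/2) = \alpha \wedge \omega = \alpha \wedge d\alpha$ one sees that the Stokes (outward-first) volume form on $\partial P$ equals $-\alpha \wedge d\alpha$. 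Hence $\int_{\partial P} \alpha \wedge d\alpha < 0$, and positivity of $[(\omega,\alpha)]^2$ follows.

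For the second bullet, the intersection pairing of $[\Sigma] \in H_2(P, \partial P)$ with $PD([(\omega,\alpha)]) \in H_2(P)$ is Lefschetz-dual to the cup pairing of $PD([\Sigma]) \in H^2(P)$ against $[(\omega,\alpha)] \in H^2(P, \partial P)$. Representing $PD([\Sigma])$ by a Thom form of $\Sigma$ and applying the formula from Subsection \ref{ss:RelativePairing} yields $\int_P PD([\Sigma]) \wedge \omega - \int_{\partial P} PD([\Sigma])|_{\partial P} \wedge \alpha$. Naturality of Poincar\'e duality identifies $PD_P([\Sigma])|_{\partial P}$ with $PD_{\partial P}([\partial \Sigma])$, and the two terms reduce to $\int_\Sigma \omega$ and $\int_{\partial \Sigma} \alpha$ respectively.

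For the third bullet, the strategy is to perturb only $\omega$ and leave $\alpha$ unchanged. Non-degeneracy of the pairing $H^2(P;\mathbb{R}) \times H^2(P, \partial P;\mathbb{R}) \to \mathbb{R}$ combined with the hypothesis $c_1(P) \neq 0$ furnishes a class $[(B,b)] \in H^2(P, \partial P;\mathbb{R})$ with $c_1(P) \cdot [(B,b)] \neq 0$. Extending $b$ into a collar as some $\tilde b$ and subtracting the coboundary $d(\tilde b, 0)$, one obtains a cohomologous representative $(B', 0)$ with $B'|_{\partial P} = 0$; a further exact correction supported in a collar of $\partial P$ (using the Poincar\'e lemma applied to $B'$ in the collar, which is closed and restricts to $0$ on $\partial P$) produces a representative that vanishes in an entire neighborhood of $\partial P$. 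After replacing $B'$ by $-B'$ if necessary so that $\int_P c_1 \wedge B' > 0$, set $\omega' := \omega + \epsilon B'$ and $\alpha' := \alpha$ for small $\epsilon > 0$. Then $\omega'$ is symplectic by openness, and because $\omega' = \omega$ in a collar of $\partial P$, the one-form $\alpha$ remains a Liouville primitive of $\omega'$ inducing the same contact structure on $\partial P$; thus $(P, \omega')$ is still a cap of $(Y,\xi)$ and $c_1(P) \cdot [(\omega',\alpha')] = \epsilon \int_P c_1 \wedge B' > 0$.

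The main technical obstacle lies in the third part: one cannot simply translate the cohomology-level perturbation into an abstract change of $[(\omega,\alpha)]$, but must realize it by forms that preserve the cap structure. The decisive step is to arrange $B'$ to vanish in a collar of $\partial P$ rather than only on $\partial P$ itself, because otherwise $\alpha$ would no longer be a Liouville primitive of $\omega + \epsilon B'$ near the boundary. Once this relative de Rham reduction is carried out, the remainder of the argument is just openness of the symplectic and contact conditions.
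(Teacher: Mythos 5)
Your proposal is correct and follows essentially the same route as the paper: the first bullet is the identical orientation-comparison argument (contact orientation from the inward Liouville field versus the Stokes orientation), the second is the same duality computation the paper dismisses as following from the definition, and the third uses the same idea of perturbing by a small multiple of a relative class that pairs nontrivially with $c_1(P)$, found via non-degeneracy of the Lefschetz pairing. Your only real addition is in the third bullet, where you carefully choose a representative vanishing in a collar of $\partial P$ (i.e.\ pass to the compactly supported model of $H^2(P,\partial P;\mathbb{R})$) so that $\alpha$ remains a Liouville primitive verbatim --- a point the paper's one-line ``add $c(A,a)$'' leaves implicit --- and this is a worthwhile clarification rather than a different proof.
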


\begin{proof}
 For the first bullet, we have $[(\omega,\alpha)]^2=\int_{P} \omega^2 -\int_{\partial P} \alpha \wedge \omega$.
 The first term is positive because the orientation of $P$ is always chosen to be compatible with $\omega^2$.
 On the other hand, $\int_{\partial P} \alpha \wedge \omega < 0$ because the orientation of $\partial P$ as a contact manifold is determined by $\iota_V (\omega^2)=2 \alpha \wedge \omega$
 for an inward pointing vector field $V$,
 while the Stokes orientation of $\partial P$ is determined by $\iota_{V_{Stokes}} (\omega^2)$ for an outward pointing vector field $V_{Stokes}$.
 Therefore, $[(\omega,\alpha)]^2 >0$.

 The second bullet follows from  definition.

 For the last bullet, if $c_1(P) \neq 0$ we can find a relative cohomology class $[(A,a)]$ pairs positively with $c_1(P)$ by
 the non-degeneracy of the pairing between absolute cohomology and relative cohomology.
 Let $(A,a)$ be a chain level representative of $[(A,a)]$.
 The result follows by adding $c(A,a)$ to $(\omega,\alpha)$ for some small $c>0$.
\end{proof}

\begin{rmk}
We could have defined Calabi-Yau cap by the equation $c_1(P) \cdot [(\omega,\alpha)] =0$ instead of $c_1(P) = 0 \in H^2(P,\mathbb{R})$.
In this case, the third bullet of Lemma \ref{l:relCohProperties} implies that if $c_1(P) $ is not torsion, then we can deform the cap to a uniruled cap
which gives  stronger restrictions to fillings.
Therefore, we stick to our defintion of Calabi-Yau caps.
\end{rmk}

\subsection{Neck-stretching basic}

Let $(X,\omega)$ be a closed symplectic four manifold and $\mathcal{J}$ the space of $\omega$-compatible almost complex structures.
We recall some basic Gromov-Witten theory and neck-stretching techniques (See \cite{ElGiHo00}, \cite{BEHWZ03} and \cite{McL14} for more comprehensive account).
For any $J \in \mathcal{J}$ and a (connected) tree $T$ with $|T|$  (finite) vertices, we call $u=(u_i)_{i=1}^{|T|}$ a {\bf closed genus $0$ nodal} $J$-holomorphic map modeled on $T$ if
for each vertex $v_i$ of $T$, there exists a $J$-holomorphic map $u_i: \mathbb{C}P^1 \to (X,\omega,J)$
such that the intersection pattern of $\{u_i(\mathbb{C}P^1)\}$ is given by the tree $T$
(i.e. an edge joining two vertices corresponds to an intersection between the corresponding $u_i(\mathbb{C}P^1)$).
A closed genus $0$ nodal $J$-holomorphic map $u$ is one that modeled on some tree $T$.
In this case, we also call $u$ a closed genus $0$ nodal $J$-holomorphic representative for the homology class $[u]=\sum u_{i*}[\mathbb{C}P^1]$.
The following proposition is well-known and readers are referred to \cite{Mc90}, \cite{McL14} and the references therein.

\begin{prop}\label{p:GWexceptional}
 Let $e$ be an exceptional class of $(X,\omega)$.
 Then for any $J \in \mathcal{J}$, there is a closed genus $0$ nodal $J$-holomorphic representative $u$ of $e$.
\end{prop}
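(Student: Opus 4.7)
The plan is to deduce this from Taubes' Seiberg-Witten=Gromov correspondence together with Gromov compactness. The starting point is that an exceptional class $e \in \mathcal{E}_X$, normalized so that $\omega(e) > 0$, satisfies $\mathcal{E}_\omega = \mathcal{E}_X$ (the fact already cited in Subsection~\ref{ss:uniruledCY}), and moreover Taubes' theorem identifies the Gromov invariant $Gr(e) = \pm 1$ with a suitable Seiberg-Witten invariant; in particular $Gr(e) \neq 0$. Consequently, for a \emph{generic} $\omega$-compatible almost complex structure $J'$ the moduli space of $J'$-holomorphic representatives of $e$ is nonempty and (by the adjunction formula applied to a class with $e^2 = -1$ and $c_1(e) = 1$) consists of embedded rational curves.

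To pass from generic $J'$ to an arbitrary $J \in \mathcal{J}$, the idea is to approximate. First I would fix $J \in \mathcal{J}$ and choose a sequence $J_n \in \mathcal{J}$ of generic almost complex structures with $J_n \to J$ in $C^\infty$. For each $n$, pick an embedded $J_n$-holomorphic sphere $u_n : \mathbb{C}P^1 \to X$ representing the class $e$; such a $u_n$ exists by the previous paragraph.

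Next I would apply Gromov compactness. The symplectic area $\int_{\mathbb{C}P^1} u_n^* \omega = \omega \cdot e$ is a positive constant independent of $n$, which gives the uniform energy bound needed. Since the domains all have genus $0$, Gromov compactness yields a subsequence converging (in the Gromov sense) to a stable $J$-holomorphic map $u_\infty$ whose domain is a nodal genus $0$ curve, i.e. a tree of $\mathbb{C}P^1$'s, and whose total homology class is $e$. Labelling the vertices of the dual tree $T$ by the components $u_i : \mathbb{C}P^1 \to (X,\omega,J)$ and the edges by the nodes gives exactly a closed genus $0$ nodal $J$-holomorphic representative of $e$ in the sense of the definition preceding the proposition.

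The only step that carries genuine content is invoking Taubes' theorem to guarantee nonvanishing of the Gromov invariant for every exceptional class; once that is in hand, the compactness argument is essentially formal. A minor subtlety is ensuring the approximating sequence $J_n$ can indeed be chosen generic so that embedded rational representatives of $e$ exist for each $J_n$; this follows from the fact that genericity in $\mathcal{J}$ is a $C^\infty$-dense condition (the regular values form a Baire subset of $\mathcal{J}$). No bubbling off of multiply covered components needs to be excluded for the conclusion, because the statement only claims a nodal (not embedded) representative.
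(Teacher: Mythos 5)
Your argument is correct and is precisely the standard proof of this well-known fact: the paper itself does not prove Proposition \ref{p:GWexceptional} but defers to \cite{Mc90}, \cite{McL14} and the references therein, where the nonvanishing of the Gromov--Taubes invariant of an exceptional class, existence of embedded rational representatives for generic $J$, and Gromov compactness along a sequence of generic $J_n\to J$ constitute exactly the argument you give. The one point worth making explicit is that the relevant $X$ may have $b_2^+=1$, so the identification $Gr(e)=\pm 1$ requires the chamber-sensitive version of SW$\Rightarrow$Gr due to Li--Liu \cite{LiLiu} rather than Taubes' original $b_2^+>1$ statement, but this is already part of the circle of results the paper cites in Subsection \ref{ss:uniruledCY}.
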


Let $(Y,\xi)$ be a separating contact hypersurface in $(X,\omega)$.
Here, we mean that there is a Liouville vector $V$ defined near $Y$ such that $V$ is transversal to $Y$.
We denote the Liouville one form $\alpha$ and $\xi=\ker(\alpha)$.
Let $(P,\omega|_P)$ be the cap and $(N,\omega|_N)$ be the filling of $Y$ obtained by cutting $X$ along $Y$.
Let $Int(P)$ and $Int(N)$ be the corresponding interiors.
We call an $\omega|_P$-compatible almost complex structure $J^{\infty}$ is {\bf cylindrical} if there is a collar
$([0,\epsilon) \times Y, \omega=d(e^r \alpha),\alpha)$ of $Y$ in $P$ such that $J^{\infty}(\xi)=\xi$, $J^{\infty}(\partial_r)=\partial_{Reeb}$, $J^{\infty}(\partial_{Reeb})=-\partial_r$
and $J^{\infty}|_{\xi}$ is translational invariant with respect to $r$,
where $r \in [0,\epsilon)$ and $\partial_{Reeb}$ is the Reeb vector field of $\alpha$.
Any cylindrical $J^{\infty}$ can be extended to a cylindrical compatible almost complex structure on the symplectic completion of $P$ and
we denote this extension as $J^{\infty}$ by abuse of notation.
For a cylindrical $J^{\infty}$, we call $u^{\infty}=(u^{\infty}_i)$ a
{\bf genus $0$ top building} if the $u^{\infty}_i$ are $J^{\infty}$-holomorphic maps (finitely many and possibly empty) from genus $0$ puntured Riemann surfaces
$\Sigma_i$ to the symplectic completion of $P$ with puntures aysmptotic to Reeb orbits of $Y$ on its negative end.
Similarly, we can think of $u^{\infty}_{i*}([\Sigma_i]) \in H_2(P,\partial P)$ and we denote $\sum u^{\infty}_{i*}([\Sigma_i])$ as $[u^{\infty}]$.
%In this case, a sequence of $\omega$-compatible almost complex structure $J_i$ neck-stretching along $Y$ has the following consequence.

\begin{prop}\label{p:neck}(See \cite{ElGiHo00}, \cite{BEHWZ03} and cf. \cite{McL14})
Let $[D]$ be a homology class in $H_2(P,\mathbb{Z})$.
For a tubular neighborhood $\mathcal{N}$ of $Y$ in $X$,
there is a sequence $J^k \in \mathcal{J}$ and an $\omega|_P$-compatible cylindrical almost complex structure $J^{\infty}$
on $P$ such that the following holds.
\begin{itemize}
 \item $J|_{P-\mathcal{N}}=J^{\infty}|_{P-\mathcal{N}}=J^k|_{P-\mathcal{N}}$ for all $k$,
 \item if, for all $k$, $u^k=(u^k_i)$ is a closed genus $0$ nodal $J^k$-holomorphic map representing the same homology class in $X$,
 then there is a genus $0$ top building $u^{\infty}=(u^{\infty}_i)$ to the symplectic completion of $P$, and
 \item $[D] \cdot [u^k]= [D] \cdot [u^{\infty}]$, where the first pairing takes place in $H_2(X)$ (ie. $[D]$ represents its image to $H_2(X)$)
 and the second is the pairing between $H_2(P)$ and $H_2(P, \partial P)$.
\end{itemize}

\end{prop}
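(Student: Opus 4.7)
The plan is to build the almost complex structures by neck stretching along $Y$, apply SFT compactness to extract a limit, and then identify the top level of the limit with the desired genus $0$ top building.

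First I would choose a Liouville vector field $V$ transverse to $Y$ and a collar $(-\epsilon, \epsilon) \times Y$ on which $\omega = d(e^r \alpha)$. On the negative half I construct a cylindrical $\omega|_N$-compatible $J^{\infty}_N$ on $N$; on the positive half I construct the required cylindrical $J^{\infty} := J^{\infty}_P$ on $P$. I then form $J^k$ on $X$ by inserting a symplectization neck $([-k,k]\times Y, d(e^r\alpha))$ between $N$ and $P$, endowed with a translation invariant almost complex structure that matches $J^{\infty}_N$ and $J^{\infty}_P$ at the two ends, and reparametrizing back onto $X$. By construction $J^k|_{P\setminus \mathcal{N}} = J^{\infty}|_{P\setminus \mathcal{N}}$ for all $k$, giving the first bullet, provided the tubular neighborhood $\mathcal{N}$ is chosen to contain the collar where the interpolation happens.

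Next, given a sequence $u^k$ of closed genus $0$ nodal $J^k$-holomorphic representatives of a fixed homology class, I apply the SFT compactness theorem of \cite{BEHWZ03}. After passing to a subsequence, the $u^k$ converge to a holomorphic building whose levels are, from top to bottom, a punctured $J^{\infty}_P$-holomorphic map into the symplectic completion of $P$, finitely many translation invariant levels in $\mathbb{R}\times Y$, and a $J^{\infty}_N$-holomorphic map into the symplectic completion of $N$. The matching condition along Reeb orbits guarantees the components fit together, and genus additivity (combined with the fact that each $u^k$ has arithmetic genus $0$) forces every component of every level to have genus $0$. Taking $u^{\infty}$ to be the collection of components in the top level yields the required genus $0$ top building with punctures asymptotic to Reeb orbits of $Y$.

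The remaining task is the intersection equality $[D]\cdot[u^k] = [D]\cdot [u^{\infty}]$. Since $[D] \in H_2(P, \mathbb{Z})$, I may represent $[D]$ by a cycle supported in the interior of $P$, disjoint from $\mathcal{N}$; by perturbation I may further arrange that its image in $X$ meets every $u^k$ and every top level component of $u^{\infty}$ transversally. Outside $\mathcal{N}$ the almost complex structures $J^k$ and $J^{\infty}$ agree, and SFT convergence is $C^{\infty}_{\mathrm{loc}}$ away from the necks, so the algebraic count of intersection points of $u^k$ with $D$ stabilizes to that of the top building with $D$ for large $k$. The pairing on the right is then exactly this algebraic count, viewed via the pairing $H_2(P) \otimes H_2(P, \partial P) \to \mathbb{Z}$ after pushing the punctured components into $H_2(P,\partial P)$.

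The main obstacle will be the intersection count step: one needs to rule out intersection points of $u^k$ with $D$ escaping into the neck region as $k \to \infty$. This is handled by choosing the cycle representative of $[D]$ to lie a positive distance away from $\mathcal{N}$ from the start, so that the $C^{\infty}_{\mathrm{loc}}$ convergence of $u^k$ on $P\setminus \mathcal{N}$ to the top building is all that is needed. The SFT compactness itself is invoked as a black box from \cite{BEHWZ03}, as is standard in this setting (cf. \cite{McL14}).
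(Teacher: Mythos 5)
Your proposal is correct and follows essentially the same route as the paper: the first two bullets are exactly the SFT compactness theorem of \cite{BEHWZ03} applied to a neck-stretching family along $Y$, and the intersection equality holds because $[D]$ can be represented by a cycle in the interior of $P$ away from the neck, so only the top level of the limit building contributes to the pairing. The paper states this in two sentences; your write-up just fills in the standard details.
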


\begin{proof}
 The first two bullets are part of the compactness result in symplectic field theory.
 The last bullet is true because the other buildings at the SFT limit of $u^k$ do not contribute to the intersection pairing with $[D]$.
\end{proof}

\begin{corr}\label{c:neck}
 Let $(P,\omega_P)$ be a symplectic cap of $(Y,\xi)$.
 Assume that $[(\omega,\alpha)]$ is a rational class and let $[D]\in H_2(P,\mathbb{Z})$ be the Lefschetz dual of $c[(\omega,\alpha)]$ for some $c >0$.
 Then for any minimal strong symplectic filling $N$ of $(Y,\xi)$ and any exceptional class $e$ in $X$, we have $[D] \cdot e >0$.
\end{corr}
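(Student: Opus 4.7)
The strategy is to glue $N$ and $P$ along $Y$ using Lemma \ref{l:splittingCohomologyPairing} to obtain a closed symplectic four-manifold $(X,\omega_X)$, produce $J$-holomorphic representatives of $e$ along a neck-stretching family, and apply SFT compactness to reduce $[D]\cdot e$ to a pairing involving only curves in the completion of the cap.

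I would first form $(X,\omega_X)$ and, for each $k$ in a neck-stretching family $\{J^k\}$ along $Y$, invoke Proposition \ref{p:GWexceptional} to obtain a closed genus $0$ nodal $J^k$-holomorphic representative $u^k$ of $e$. Passing to the SFT limit via Proposition \ref{p:neck} yields a genus $0$ top building $u^\infty=(u^\infty_i)$ in the completion $P^c$ of $P$ satisfying $[D]\cdot e=[D]\cdot [u^\infty]$, where $[u^\infty]\in H_2(P,\partial P)$ is the image of $e$ under the excision isomorphism $H_2(X,N)\cong H_2(P,\partial P)$.

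Next I would evaluate this pairing componentwise via the second bullet of Lemma \ref{l:relCohProperties}. Truncating each $u^\infty_i$ deep inside the cylindrical end at a level $r=-R$ and pushing the boundary back to $Y$ by the Liouville flow gives a relative representative of $[u^\infty_i]\in H_2(P,\partial P)$, and
\[
 [u^\infty_i]\cdot [D] \;=\; c\Bigl(\int u^{\infty*}_i\omega_P - \int u^{\infty*}_i\alpha_P\Bigr).
\]
Using $\omega_P=d(e^r\alpha_P)$ on the cylindrical end together with Stokes' theorem, the flux through the asymptotic Reeb orbit boundary is weighted by $e^{-R}$ and vanishes as $R\to\infty$, leaving $[u^\infty_i]\cdot [D]$ equal to $c$ times the $\omega_P$-symplectic area of the image of $u^\infty_i$ inside $P$ --- a nonnegative quantity, strictly positive whenever the component has any non-trivial image in $P$. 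Summing, $[D]\cdot [u^\infty]\geq 0$, with equality precisely when every top-level component is trivial.

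The main obstacle is to exclude the degenerate case $[u^\infty]=0$. Were it to occur, every top-level component would be trivial; since the top would then carry no asymptotic ends, the puncture-matching condition in the SFT compactness forces both the symplectization levels and the positive ends of the bottom-level components to be absent. The bottom level then supplies a closed genus $0$ nodal $J^\infty$-holomorphic representative of $e$ in the completion $N^c$, which the maximum principle at the convex end places inside $N$ itself. Because $e\cdot e=-1$, the standard Taubes--Gromov positivity-of-intersection and adjunction arguments extract from this configuration an embedded $\omega_N$-symplectic sphere in $N$ of self-intersection $-1$, contradicting the minimality of $N$. Hence $[u^\infty]\neq 0$ and $[D]\cdot e>0$.
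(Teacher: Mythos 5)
Your overall strategy is the same as the paper's: glue, neck-stretch, pass to the SFT limit in the completion of $P$, and evaluate $[D]\cdot[u^{\infty}]$ component by component via the second bullet of Lemma \ref{l:relCohProperties}. However, your central positivity computation contains an error. When you truncate $u^{\infty}_i$ at level $r=-R$ and cap off with Liouville cylinders, the resulting quantity is
$\int_{\{r\ge -R\}}u^{\infty*}_i\widehat{\omega}-e^{-R}\int_{C_{-R}}u^{\infty*}_i\alpha$, so what survives as $R\to\infty$ is $c$ times the \emph{total} $\widehat{\omega}$-area of $u^{\infty}_i$ over the whole completion $\widehat{P}$ (including the cylindrical end), not the $\omega_P$-area of the part of the image lying inside $P$. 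This matters: with your identification, a non-constant component contained entirely in the end would contribute zero, and your subsequent inference that ``every top-level component is trivial $\Rightarrow$ the top carries no asymptotic ends'' is then false -- such a component can certainly carry punctures. With the corrected identification every non-constant component contributes strictly positively, which is exactly what the paper obtains by a different bookkeeping: it keeps the boundary term and observes that near each puncture one can choose a circle $C$ on which $(\pi_Y\circ u^{\infty}_i)^*\alpha$ is negative for the Stokes orientation, so $-\int_{\partial\Sigma}\alpha>0$ while $\int_{\Sigma}\omega\ge 0$. So the gap is localized and repairable, but as written the key step does not close.

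On the other hand, your explicit treatment of the degenerate case $[u^{\infty}]=0$ is a genuine addition: the paper's proof tacitly assumes the top building is nontrivial, whereas you reduce the empty-top case to a closed nodal genus $0$ representative of $e$ confined to $N$ and invoke minimality. That is the right idea, but note that extracting an embedded symplectic $(-1)$-sphere in $N$ from such a configuration is not a one-line consequence of positivity of intersections and adjunction: one must decompose $e=\sum m_jA_j$ into somewhere-injective sphere classes, find $A_j$ with $c_1(A_j)\ge 1$, and separately rule out the possibility $A_j^2\ge 0$ (e.g.\ using $b_2^+(N)=0$, which itself comes from $[(\omega_P,\alpha_P)]^2>0$ and $b_2^+(X)=1$). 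As stated this step is only asserted, so it should either be carried out or attributed to a precise reference.
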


\begin{proof}
 By Proposition \ref{p:GWexceptional}, there is a closed genus $0$ nodal representative $u$ for any exceptional class $e$ and any $J \in \mathcal{J}$.
 We apply Proposition \ref{p:GWexceptional} to the choice of $J^k$ in Proposition \ref{p:neck}, then we get a $J^{\infty}$ genus $0$ top building $u^{\infty}=(u^{\infty}_i)$.
 Notice that, the $u^{\infty}_i$ are $J^{\infty}$-holomorphic for all $i$ and they have punctures asymptotic to Reeb orbits.
 Near each puncture of $\Sigma_i$, the domain of $u_i^{\infty}$, we have a cylindrical-like conformal coordinate from which we can find a small circle $C$ sufficiently close to the puncture
 such that  $(\pi_Y \circ u_i^{\infty})^*\alpha|_C < 0$ with respect
 to the Stokes orientation of $C$ induced by $\Sigma_i$.
 Here, $\pi_Y$ is the projection from the negative end of the completion of $P$ to the factor $Y$.
 This is because we can find $C$ such that $\pi_Y \circ u_i^{\infty}(C)$ is close to a Reeb orbit of $Y$ and the Stokes orientation of $C$ is different from the orientation induced by $(\pi_Y \circ u_i^{\infty})^*\alpha$.
 Therefore, as in the calculation of first bullet of Lemma \ref{l:relCohProperties} and
 by the second bullet of Lemma \ref{l:relCohProperties}, we must have $u^{\infty}_{i*}[\Sigma_i] \cdot [D] > 0$.
 Therefore, Proposition \ref{p:neck} implies $[D] \cdot e >0$.

\end{proof}

\subsection{A general property for Betti finiteness}

For any contact 3-manifold of Stein,  exact, or strong Betti finite type, we have the following restriction for simply connected fillings.

\begin{prop}\label{l:finiteHomeo}
 If $(Y, \xi)$ is of Stein, exact, or strong Betti finite type, then $(Y, \xi)$
 has at most finitely many simply connected Stein, exact or minimal strong, respectively,  fillings up to homeomorphism.
\end{prop}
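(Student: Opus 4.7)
The plan is to reduce the finiteness of homeomorphism types to the finiteness of isomorphism classes of integer symmetric bilinear forms of bounded rank and bounded discriminant, and then to apply Boyer's homeomorphism classification of simply-connected topological $4$-manifolds with a prescribed boundary.

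First I would pin down the homological data. For any simply-connected filling $N$ of $(Y,\xi)$, $H_1(N;\mathbb{Z})=0$, so $H_2(N;\mathbb{Z})\cong\mathbb{Z}^{b_2(N)}$ is free. Combining the long exact sequence of $(N,\partial N)$ with Poincar\'e--Lefschetz duality and universal coefficients (using $H_1(N)=0$) yields an exact sequence
\[
H_2(Y;\mathbb{Z})\longrightarrow H_2(N;\mathbb{Z})\stackrel{Q_N}{\longrightarrow} \mathrm{Hom}(H_2(N;\mathbb{Z}),\mathbb{Z})\longrightarrow H_1(Y;\mathbb{Z})\longrightarrow 0,
\]
where $Q_N$ is the intersection form. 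A rank count shows that $\ker Q_N$ has rank exactly $b_1(Y)$, so $Q_N$ splits (over $\mathbb{Z}$, since $H_2(N)/\ker Q_N$ is free) as $0^{b_1(Y)}\oplus\bar{Q}_N$, where $\bar{Q}_N$ is nondegenerate on $\mathbb{Z}^{b_2(N)-b_1(Y)}$ and its discriminant is controlled by the torsion of $H_1(Y;\mathbb{Z})$.

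Next, the Betti finite type hypothesis forces $b_2(N)$ to take only finitely many values, and because $(Y,\xi)$ is fixed, both $b_1(Y)$ and $|\mathrm{tors}\,H_1(Y;\mathbb{Z})|$ are fixed constants. Thus the rank and discriminant of $\bar{Q}_N$ are uniformly bounded. By the classical finiteness theorem for integral symmetric bilinear forms of bounded rank and bounded discriminant (going back to Minkowski--Siegel reduction theory), the isomorphism class of $\bar{Q}_N$, and hence of $Q_N$, ranges over a finite set as $N$ runs over all simply-connected fillings of the given type.

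Finally, I would invoke Boyer's homeomorphism classification of oriented simply-connected topological $4$-manifolds with fixed closed oriented $3$-manifold boundary: such a manifold is determined up to homeomorphism (even rel boundary) by the isomorphism class of its intersection form, its compatibility with the linking form on $\mathrm{tors}\,H_1(Y;\mathbb{Z})$, and the Kirby--Siebenmann invariant. For each fixed $Q_N$ the first of these is given and the latter two take only finitely many values. Combining with the previous step, we get only finitely many homeomorphism types of simply-connected fillings. The main obstacle is Step~3, where one must ensure that once $Q_N$ is fixed, the auxiliary discrete data appearing in Boyer's classification contribute at most finitely many possibilities; the algebraic finiteness in Step~2 and the exact sequence manipulation in Step~1 are routine.
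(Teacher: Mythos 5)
Your proof is correct and follows essentially the same route as the paper: identify the cokernel of the intersection form with $H_1(Y;\mathbb{Z})$, use the Betti finite type hypothesis to bound the rank, deduce finiteness of the possible intersection forms, and conclude with Boyer's finiteness theorem for simply connected $4$-manifolds with prescribed boundary and intersection form. Your direct splitting $Q_N=0^{b_1(Y)}\oplus\bar{Q}_N$ with $|\det\bar{Q}_N|=|\mathrm{tors}\,H_1(Y;\mathbb{Z})|$ is just a one-step version of the paper's inductive Lemma~\ref{lem: bilinear form}, resting on the same classical finiteness for nondegenerate forms of bounded rank and determinant.
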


To prove this proposition, we introduce necessary definitions and a lemma.
For an integral symmetric bilinear form $Q: \mathbb{Z}^n\times \mathbb{Z}^n\to \mathbb{Z}$, let  $M_Q$ be a matrix presentation of $Q$,
and let $G_Q$ be the group presented by the matrix $M_Q$ (i.e. the cokernel of the homomorphism $\mathbb{Z}^n\to \mathbb{Z}^n$ given by $M_Q$).
Note that $G_Q$ is independent of the choice of $M_Q$. Let $r_{Q}$, $d_Q$ be the rank of  $G_Q$ and the number of elements of $\textnormal{Tor}\, (G_Q)$, respectively.

 Though the lemma below might be known to experts, we give a proof since we could not find any reference.
\begin{lemma}\label{lem: bilinear form}
For any finitely generated abelian group $G$ and any positive integer $n$, there exist at most finitely many isomorphism types of integral symmetric bilinear forms such that their matrix presentations present $G$ and have the size $n$.
%For any positive integer $s$ and any non-negative integers $r,d$ with $r+d=s$, there exist at most finitely many isomorphism types of integral symmetric bilinear forms $Q$ with $r_{Q}=r$, $d_Q=d$, $s_Q=s$.
\end{lemma}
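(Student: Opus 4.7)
The plan is to reduce the lemma to the classical finiteness theorem of Hermite for non-degenerate integral symmetric bilinear forms, which asserts that for each fixed rank, signature, and determinant there are only finitely many isomorphism classes of such forms (see, for instance, Cassels, \emph{Rational Quadratic Forms}). The key preliminary step is to strip off the degenerate part of $Q$ over $\mathbb{Z}$ and show that the remaining non-degenerate form has rank, signature, and determinant all controlled by $G$ and $n$.

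Concretely, I would first introduce the radical $R = \{x \in \mathbb{Z}^n : Q(x,y) = 0 \text{ for all } y\}$, which coincides with $\ker(M_Q : \mathbb{Z}^n \to \mathbb{Z}^n)$. Since the kernel of a homomorphism between free abelian groups is a saturated sublattice, I can choose a $\mathbb{Z}$-basis of $\mathbb{Z}^n$ adapted to $R$. In such a basis the matrix of $Q$ takes the block form
\[
\begin{pmatrix} \bar M & 0 \\ 0 & 0 \end{pmatrix},
\]
with $\bar M$ a symmetric, non-degenerate integer matrix of size $n - \mathrm{rank}\,R$. In other words, $Q$ decomposes over $\mathbb{Z}$ as an orthogonal direct sum $\bar Q \perp 0$, where $\bar Q$ is non-degenerate on $\mathbb{Z}^{n-r}$ for some $r$. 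Comparing cokernels of the two block matrices immediately forces $r = r_G$ (the free rank of $G$) and $\mathrm{coker}(\bar M) \cong \mathrm{Tor}(G)$; in particular $|\det \bar M| = d_G$ depends only on $G$.

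The signature $(p,q)$ of $\bar Q$ satisfies $p + q = n - r_G$, so it has only finitely many possibilities; the determinant is pinned down to $\pm d_G$. For each choice of these invariants, Hermite's theorem yields finitely many isomorphism classes of non-degenerate integral symmetric bilinear forms realizing them. Adjoining the trivial summand of rank $r_G$ preserves this finiteness, so only finitely many isomorphism types of $Q$ can occur with matrix presentation of size $n$ and cokernel $G$.

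The main obstacle is justifying the \emph{integral} orthogonal splitting $Q \cong \bar Q \perp 0$ (not merely a splitting over $\mathbb{Q}$) and reading off the determinant of $\bar Q$ from $\mathrm{Tor}(G)$; once saturation of the radical and the Smith normal form calculation are in place, the rest is a direct invocation of classical reduction theory.
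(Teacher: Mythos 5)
Your proposal is correct and takes essentially the same route as the paper: both arguments split off the radical of $Q$ as an integral orthogonal summand and then invoke the classical finiteness theorem for non-degenerate integral symmetric bilinear forms of fixed size and determinant (Cassels, Ch.~9, Thm.~1.1). The only difference is organizational --- the paper peels off one primitive radical vector at a time by induction on $\mathrm{rank}\,G$, whereas you extract the whole radical in one step using the saturation of $\ker M_Q$.
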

\begin{proof}Let $d$ denote the number of elements of $\textnormal{Tor}\,(G)$, and put $r=\textnormal{rank}\, G$. We prove the claim by induction on the number $r\geq 0$. The $r=0$ case follows from the finiteness of isomorphism types of intersection forms with non-zero determinant. For this fact, see Theorem 1.1 in Chapter 9 of \cite{Cas78}. Note that $d_Q=\textnormal{det}(M_Q)$ in this case.

Assuming the $r=k\geq 0$ case, we prove the $r=k+1$ case.  The condition $r\geq 1$ implies $\textnormal{det}(M_Q)=0$ for any intersection form $Q$ with $G_Q\cong G$. Therefore, there exist integral square matrixes $A, B$ with size $n$ and $|\textnormal{det}(A)|=|\textnormal{det}(B)|=1$ such that $AM_QB$ is a diagonal matrix which has a zero in a diagonal component. Using this fact, we easily see that there exists a primitive element $x\in \mathbb{Z}^n$ satisfying $Q(x, y)=0$ for any $y\in \mathbb{Z}^n$. As a consequence, $Q$ has the orthogonal sum decomposition $Q=Q|_{\langle x \rangle}\oplus Q|_H$ for some subgroup $H$ of $\mathbb{Z}^n$. Since $G_Q\cong G_{Q|_{\langle x \rangle}}\oplus G_{Q|_{H}}$, we see $r_{Q|_H}=k$. Therefore, the assumption on the induction shows the $r=k+1$ case.
\end{proof}

\begin{proof}[Proof of Proposition \ref{l:finiteHomeo}]Let $(Y,\xi)$ be a contact 3-manifold of Stein Betti finite type (resp.\ exact or strong Betti finite type). The intersection form $Q$ of any simply connected compact 4-manifold with the boundary $Y$ satisfies $G_{Q}\cong H_1(Y;\mathbb{Z})$ (cf.\ \cite{GS99}). By the assumption, there are only finitely many possible values of $b_2$ for Stein (resp.\ exact or minimal strong) fillings of $(Y,\xi)$. Therefore, according to  Lemma~\ref{lem: bilinear form}, there are only finitely many possible intersection forms of such Stein (resp.\ exact or minimal strong) fillings. According to a theorem of Boyer (Corollary 0.4 in \cite{Bo86}), for a given connected oriented closed 3-manifold and  intersection form, there are at most finitely many topological types of simply connected 4-manifolds which realize the given boundary and the intersection form. Therefore the desired claim follows.
\end{proof}

\section{Calabi-Yau caps and exact fillings} \label{s:CY}

\subsection{Theorem \ref{t:CY} and Theorem \ref{t:lagrangian}}

We prove Theorem \ref{t:CY}, Theorem \ref{t:lagrangian} and Corollary \ref{c:infiniteFamily} in this subsection.

\begin{proof} [Proof of Theorem \ref{t:CY}]
Let $(P,\omega_P)$ be a Calabi-Yau cap with a Liouville contact form $\alpha_P$ for the contact manifold $(Y, \xi)$. We must now establish that $(Y, \xi)$ is of exact Betti finite type.

 For an exact symplectic filling $(N,\omega_N)$ of $(Y,\xi)$, we also have a Liouville contact form $\alpha_N$ on $\partial N$ making
$c_1(N) \cdot [(\omega_N,\alpha_N)] =0$.
Since  $c_1(P) \cdot [(\omega_P,\alpha_P)] =0$,
   by Lemma \ref{l:splittingCohomologyPairing}, the glued closed symplectic manifold $(X,\omega)$ has $c_1(X) \cdot [\omega]=0$.
If $X$ is not minimal, we can blow down the exceptional spheres to obtain a minimal model.
Since blow-down increases $c_1(X) \cdot [\omega]$, we must have $X$ being non-minimal uniruled or minimal symplectic Calabi-Yau.

In either case we will establish uniform bounds of Betti numbers of $X$ which depend only on $P$.
It then follows from the Mayer-Vietoris sequence there are uniform bounds of Betti numbers of $N$
depending only on $P$.

Let us  start with the case that  $X$ is non-minimal uniruled.
We will first bound  $b_1(X)$ and $b_3(X)$.
Since $[(\omega_P,\alpha_P)]^2>0$ by Lemma \ref{l:relCohProperties} and $b_2^+(X)=1$, we have $b_2^+(P) = 1$ and $b_2^+(N)=0$.
On the other hand, we can find a closed oriented smoothly  embedded surface $S$ in $P$ whose homology class is the Lefschetz dual of $c\,[(\omega_P,\alpha_P)]$ for some constant $c$,
by possibly perturbing $[(\omega_P,\alpha_P)]$ to a rational class (cf. \cite{GS99}, Remark 1.2.4).
Then $[S]^2 > 0$ and one should think that $S$ is chosen before gluing with $N$.
Let $g(S)$ denote the genus of $S$.
By Lemma \ref{l:genusUpperBound}, the base genus of $X$ is less than or equal to $g(S)$ and this gives an upper bound for $b_1(X)$ and hence $b_3(X)$ depending only on $P$.

To bound $b_2(X)=b_2^+(X)+b_2^-(X)=1+b_2^-(X)$, it suffices to bound
$b_2^-(X)$. And to bound $b_2^-(X)$ it suffices to bound $c_1(X)^2$ from below. This simply follows from
the just established bound on $b_1(X)$ and the relation
$$c_1(X)^2=2e(X)+3\sigma(X)=4-4b_1(X) +5b_2^+(X)-b_2^-(X)=9-4b_1(X)-b_2^-(X),$$
where we again used the fact $b_2^+(X)=1$ in the last equality.
%the number of exceptional spheres in $X$.

To bound $c_1(X)^2$ we need to choose the surface $S$ more carefully.
We observe that $S$ can be chosen such that $ -c_1(P)\cdot[S]+[S]^2  \geq 0$ by  choosing a larger
$c$.
We further assume that
$[S]^2 \ge g(S)-1$, by possibly choosing an even  larger $c$.
The reason is, once $S$ is chosen as above, we consider $\nu$ distinct copies of embedded surfaces representing $[S]$ by local perturbation of $S$.
We assume each pair of these $\nu$ copies are  intersecting transversally and positively.
After resolving all the positive  intersection points for these $\nu$ copies, we get an embedded surface of genus $\nu g(S)+\frac{(\nu-1)\nu}{2}[S]^2-(\nu-1)$ with self-intersection $\nu^2[S]^2$.
When $\nu$ is large, we get an embedded surface with homology class being a positive multiple of $cPD[(\omega_P,\alpha_P)]$ such that
the self-intersection number is greater than the genus.
In summary,  the surface $S$ is assumed to satisfy  $[S]^2 \ge g(S)-1$ and $ -c_1(P)\cdot[S]+[S]^2  \geq 0$.

Since $\omega_N$ is exact,  $\omega_P-d\alpha_P^c$ in Lemma \ref{l:splittingCohomologyPairing} viewed as a closed two form on $X$ represents the same cohomology class as $\omega$.
Therefore, $S$ is the Poincar\'e dual of $c[\omega]$ and any exceptional class pairs positively with $[S]$ in $X$.
By \cite{LiLi02}, there is a symplectic surface $\widetilde{S} \subset X$ representing $[S]$ and the genus
$\widetilde{g}$ of $\widetilde{S}$ is less than or equal to $g(S)$ according the genus minimizing property of symplectic surfaces (\cite{KrMr94}, \cite{MoSzTa96}). Notice that $\tilde g$ is determined by
the adjunction formula:
$$2\tilde g -2 = -c_1(X)\cdot [S]+[S]^2=-c_1(P)\cdot[S]+[S]^2.$$
Thus we have $\widetilde{g} > 0$.
We may assume that $X$ is not minimal, otherwise there is nothing to prove.
By Lemma \ref{l:weiyi}, we have $(-c_1(X)+[\widetilde{S}])^2 \ge 0$.
Notice that
$$(-c_1(X)+[\widetilde{S}])^2=c_1(X)^2 + 2(-c_1(X) \cdot [S]+[S]^2)-[S]^2=c_1(X)^2+2(2\widetilde{g}-2)-s,$$ where the second equality is by adjunction and $s=[S]^2$.
Therefore, $$c_1(X)^2 \ge s-2(2\widetilde{g}-2) \ge s-2(2g(S)-2).$$
Since $g(S)$ and $s$  only depend on $S \subset P$, the lower bound of $c_1(X)^2$ is independent of $N$.
As a result, $b_2(X)$ is bounded.

%To bound $b_1(N),b_3(N),(e+\sigma)(N)$, we first observe that $(e+\sigma)(N)$ is bounded by Novikov additivity.
%On the other hand, we have $(b_1+b_3)(N)=-(e+\sigma)(N)+1+2b_2^+(N)+b_2^{0}(N)$.
%Therefore, it suffices to bound $b_2^{0}(N)$.
%Notice that $b_2^{0}(Z) \leq b_1(\partial Z)$ for any compact connected oriented 4-manifold $Z$ with boundary,
%where $b_2^0(Z)$ denotes the maximal dimension of subspaces of $H_2(Z;\mathbb{R})$ whose intersection from is represented by the zero matrix
%(This can be seen from the long exact sequence for the pair $(Z, \partial Z)$ as follows. By the argument of Solution of Exercise 5.3.13(f) in \cite{GS99},
%it follows that $b_2^0(Z)$ is the rank of the kernel of the homomorphism $H_2(Z;\mathbb{R})\to H_2(Z,\partial Z;\mathbb{R})$.
%The exact sequence thus gives $b_2^0(Z)\leq b_2(\partial Z)=b_1(\partial Z)$.).

Next, we assume $X$ is a minimal symplectic Calabi-Yau surface, Theorem \ref{t:CYhomologyBound} give a uniform bound on the Betti numbers of $X$ (which are actually  independent of $P$).
Hence, $b_1(N), b_2(N),b_3(N)$ are uniformly  bounded by the algebraic topology of $P$.
It finishes the proof of the first statement.
The second statement is straightforward  because $X$ must be a minimal symplectic Calabi-Yau surface.

%We observe that, from what we have shown, if we can show that $b_2^-(X)$ is bounded, then the Betti numbers of $N$ are all bounded.
%If, further more, $N$ is Stein.
%Then Theorem 1.4 of \cite{Sti03} gives a lower bound of $2e(N)+3\sigma(N)$ and hence a bound on $b_2^-(X)$.
%Thus $Y$ is Stein Betti finite type.

%Now, we do not want to impose the Stein assumption on $N$.
%If $P$ cannot embed in a uniruled manifold, then $X$ is Calabi-Yau and hence $b_2^-(X)$ is bounded.
%It implies $Y$ is of exact Betti finite type.
%Alternatively, if $P$ satisfies the property $(D)$, then there is a symplectic surface $D$ of genus $g$ and self-intersection $s$ in $(P,\omega_P)$.
%Corollary \ref{c:neck} implies that $D$ intersects all exceptional classes in $X$.
%By Lemma \ref{l:weiyi}, we have $(-c_1(X)+[D])^2 \ge 0$.
%Notice that $(-c_1(X)+[D])^2=c_1(X)^2 + 2(-c_1(X) \cdot [D]+[D]^2)-[D]^2=c_1(X)^2+2(2g-2)-s$, where the second equality is by adjunction.
%Therefore, $c_1(X)^2 \ge s-2(2g-2)$.
%Since the base genus of $X$ is bounded $g$ (Lemma \ref{l:genusUpperBound}), a lower bound on $c_1(X)^2$ gives an upper bound of $b_2^-(X)$ which only
%depend on the surface $D$ in $P$.
%Therefore, $Y$ is of exact Betti finite type.

\end{proof}

We now study the homology and cohomology of exact fillings of the unit cotangent bundles of surfaces.
We start with a lemma.

\begin{lemma}\label{l:constructLag}
There is a symplectic $K3$ surface which has $g$ disjoint copies of embedded Lagrangian tori representing the same homology class and each of which intersects  an embedded Lagrangian sphere transversally at one  point.
\end{lemma}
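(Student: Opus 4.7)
The plan is to realize the required configuration as a hyperkähler rotation of a standard complex elliptic K3 surface equipped with a holomorphic section.

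First, I would take a complex elliptic K3 surface $\pi : X \to \mathbb{CP}^1$ admitting a holomorphic section $s : \mathbb{CP}^1 \to X$; any generic Weierstrass elliptic K3 provides such a model. Pick $g$ distinct regular values $p_1, \ldots, p_g \in \mathbb{CP}^1$ and set $T_i := \pi^{-1}(p_i)$ and $S := s(\mathbb{CP}^1)$. By construction, the $T_i$ are $g$ pairwise disjoint, smoothly embedded 2-tori, all representing the common class $[F]$ of a regular fiber; $S$ is a smoothly embedded 2-sphere (a smooth rational curve on $X$, of self-intersection $-2$); and $S$ meets each $T_i$ transversally in the single point $s(p_i)$.

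Next, I would invoke the hyperkähler structure on the K3. The given complex structure, call it $J$, belongs to a hyperkähler triple $(I, J, K)$ with Kähler forms $(\omega_I, \omega_J, \omega_K)$, and the holomorphic symplectic form for $J$ is $\Omega_J = \omega_I + i \omega_K$. On any $J$-complex curve in $X$ the restriction of $\Omega_J$ must vanish, since it is a holomorphic 2-form on a complex 1-dimensional submanifold; in particular $\omega_I$ restricts to zero on every $J$-complex curve. Applied to the $T_i$ and $S$ (which are all $J$-complex curves), this says that in the symplectic K3 surface $(X, \omega_I)$ the $T_i$ are disjoint Lagrangian tori representing the same homology class $[F]$, and $S$ is a Lagrangian sphere transversally meeting each $T_i$ at exactly one point. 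Disjointness, transverse single-point intersections, and homology classes are topological data, hence unchanged when the symplectic form $\omega_J$ is replaced by $\omega_I$ on the same underlying smooth 4-manifold.

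The only step that requires care is the hyperkähler rotation, which for K3 surfaces is entirely standard and can simply be cited rather than reproved; I do not anticipate a substantive obstacle.
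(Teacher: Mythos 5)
Your construction is correct, but it takes a genuinely different route from the paper's. The paper builds the symplectic $K3$ as a Kummer surface: it starts from $(T^4,\omega_T)$ with the involution $(z,w)\mapsto(-z,-w)$, takes the $g$ Lagrangian tori to be $xu$-tori avoiding the sixteen fixed points, and obtains the Lagrangian sphere as the lift of a $yv$-torus passing through four fixed points; the bulk of that proof is an explicit local computation showing that the symplectic resolution of the orbifold points can be arranged so that the $yv$-orbifold sphere lifts to an embedded Lagrangian sphere. Your route --- a Weierstrass elliptic $K3$ with a holomorphic section, followed by hyperk\"ahler rotation so that the $J$-holomorphic fibers $T_i$ and section $S$ become Lagrangian for $\omega_I=\mathrm{Re}\,\Omega_J$ --- is considerably shorter and replaces the explicit resolution model with an appeal to Yau's theorem and the standard vanishing of the holomorphic $2$-form on $J$-complex curves. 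The homological data match the paper's exactly ($[T_i]^2=0$, $[S]^2=-2$, $[T_i]\cdot[S]=1$), and disjointness and transverse single-point intersections are unaffected by changing the symplectic form on the fixed smooth manifold, so your output feeds into the proof of Theorem \ref{t:lagrangian} without modification. What the paper's approach buys is self-containedness within soft symplectic geometry (no Ricci-flat metrics are invoked, everything is an explicit coordinate computation); what yours buys is brevity, at the cost of citing the existence of hyperk\"ahler metrics on $K3$. The only point you should make explicit when writing this up is the normalization of $\Omega_J$ against the Ricci-flat K\"ahler form guaranteeing that $\mathrm{Re}\,\Omega_J$ is itself a K\"ahler form for a complex structure in the hyperk\"ahler family (hence closed and nondegenerate); this is standard but is the step on which the whole argument rests.
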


\begin{proof}
We first recall a construction of a $K3$ surface \cite{GS99}.
Let $(T^4,\omega_T)$ be the four torus equipped with the quotient Kahler form induced by the standard Kahler form on $\mathbb{C}^2$ quotient by $\mathbb{Z}^4$.
There is an involution $I: T^4 \to T^4$ defined by $I(z,w)=(-z,-w)$ which has $16$ fixed points.
The quotient $X'=T^4/((z,w) \sim I(z,w))$ has $16$ singular points.
After resolving these $16$ singular points, which results in $16$ spheres of self-intersection $-2$, we get our $K3$ surface $(X,\omega_X)$.

We can write $\omega_T$ as $dx \wedge dy +du \wedge dv$.
Then there is a family of Lagrangian tori corresponding to $x,u$ coordinates (ie. tangent space spanned by $\partial_x,\partial_u$)
and another family corresponding to $y, v$ coordinates.
We call them $xu$-tori and $yv$-tori.
%and defined similarly for $xv$-tori, $yu$-tori, $xy$-tori and $uv$-tori.
%Note that $xy$-tori and $uv$-tori are symplectic, while the other are Lagrangian.

For any $g >1$, pick $g$ disjoint $xu$-tori.
One can choose these $g$ Lagrangian $xu$-tori in a way that they avoids the $16$ fixed points and such that they
descend to disjoint embedded Lagrangian tori in the quotient $X'$.
These $16$ tori in $X'$ can then be lifted to $X$.

We consider a $yv$-torus that passes through $4$ of the $16$ fixed points.
When descended to $X'$, the image of this Lagrangian torus becomes a orbifold Lagrangian sphere with four orbifold points.
We call the orbifold sphere $yv$-orbifold sphere.
We claim that we can resolve the $16$ orbifold points of $X'$ in a way that $yv$-orbifold sphere lifts to an embedded Lagrangian sphere in $X$.
Once we established the claim, it is clear that this Lagrangian sphere together with the $g$ Lagrangian tori are the Lagrangians in $X$ we want.

To prove the claim, it suffices to understand the local model for the symplectic resolution at the orbifold points.
It turns out that the resolution is symplectically the same as replacing a neighborhood of an orbifold point and gluing back a neighborhood of zero section of
$T^*S^2$, where the $yv$-orbifold sphere near the orbifold point is identified with a fiber of $T^*S^2$ after the gluing.
Hence, the $yv$-orbifold sphere can be lifted to $X$ by extending the fibers across the zero section of $T^*S^2$.

After explaining the effect of the resolution and why the claim follows from it, we now explain how the surgery goes, which turns out to be a routine calculation.
We start with a model for $T^*S^2$.
Let $U_1=U_2=\mathbb{C}$ and $\phi_{12}:U_1\backslash \{0\} \to U_2\backslash \{0\}$ be $\phi_{12}(z)=\frac{1}{z}$.
Then $\Phi_{12}=(\phi_{12}^*)^{-1}:T^*(U_1\backslash \{0\}) \to T^*(U_2\backslash \{0\})$ is given by $\Phi_{12}(z,w)=(\frac{1}{z},-\overline{z}^2w)$,
where $z \in U_1$ and $w \in \mathbb{C}$.
The standard Liouville one form on $T^*U_1$ is given by $\lambda_1=p_1dq_1+p_2dq_2=\frac{1}{2}(wd\bar{z}+\bar{w}dz)$, where the identification is $z=q_1+iq_2,w=p_1+ip_2$.
The Liouville form on $T^*U_2$ is similar.
These give the description of $T^*S^2$.

We define a double covering $\rho_j: U_j \times (\mathbb{C} \backslash \{0\}) \to (T^*U_j\backslash U_j)$ by
$$\rho_j(\widetilde{z},\widetilde{w})=(\widetilde{z},i \overline{\widetilde{w}^2})$$
for $j=1,2$.
This double covering can be globalized using the transition map $\widetilde{\Phi}_{12}: (U_1 \backslash \{0\}) \times (\mathbb{C} \backslash \{0\}) \to (U_2 \backslash \{0\}) \times (\mathbb{C} \backslash \{0\})$ given by
$$\widetilde{\Phi}_{12}(\widetilde{z},\widetilde{w})=(\frac{1}{\widetilde{z}},i\widetilde{z}\widetilde{w})$$
in the sense that $\rho_2 \circ \widetilde{\Phi}_{12}=\Phi_{12} \circ \rho_1$ over $(U_1 \backslash \{0\}) \times (\mathbb{C} \backslash \{0\})$.
Clearly, $U_j \times (\mathbb{C} \backslash \{0\})$ together with $\widetilde{\Phi}_{12}$ are charts and transition function of the $O(1)$ bundle over $\mathbb{CP}^1$ away from the zero section.
Moreover, $\rho_j$ determines a double covering to $T^*S^2 \backslash S^2$.
The pull-back one form is given by
$$\widetilde{\lambda}_1=\rho_1^*\lambda_1=\frac{1}{2}(i (\overline{\widetilde{w}})^2d\overline{\widetilde{z}}-i\widetilde{w}^2d\widetilde{z})$$

We define diffeomorphisms $\Psi_1: (\mathbb{C}\backslash \{0\}) \times \mathbb{C} \to U_1 \times (\mathbb{C} \backslash \{0\})$ by
$$\Psi_1(\hat{z},\hat{w})=(\frac{\hat{w}}{\hat{z}},\hat{z})$$
and $\Psi_2: \mathbb{C} \times (\mathbb{C} \backslash \{0\}) \to U_2 \times (\mathbb{C} \backslash \{0\})$ by
$$\Psi_2(\hat{z},\hat{w})=(\frac{\hat{z}}{\hat{w}},i\hat{w})$$
which satisfy $\widetilde{\Phi}_{12} \circ \Psi_1=\Psi_2$ over $(\mathbb{C}\backslash \{0\}) \times (\mathbb{C}\backslash \{0\})$.
Hence $\Psi_1,\Psi_2$ together give a diffeomorphism from $\mathbb{C}^2 \backslash \{0\}$ to $O(1)$-bundle of $\mathbb{CP}^1$ away from zero section.
The differential of the pull-back one form is given by
$$d(\Psi_1^*\widetilde{\lambda}_1)=-i(d\hat{z} \wedge d\hat{w}-d\overline{\hat{z}} \wedge d\overline{\hat{w}})$$
By letting $\hat{z}=u+ix,\hat{w}=y+iv$, we get $d(\Psi_1^*\widetilde{\lambda}_1)=2(dx \wedge dy +du \wedge dv)$ which is the standard symplectic form up to a constant multiple.
In particular, it coincide with the $\omega_T$ on $T^4$ near a fixed point.
The $yv$-torus near a fixed point corresponds to $\hat{z}=0$, which can be identified as a fiber of $U_2 \times (\mathbb{C} \backslash \{0\})$
under $\Psi_2$.
Finally, note that the involution on $T^4$ satisfies $\rho_j \circ \Psi_j \circ I=\rho_j \circ \Psi_j$ for both $j=1,2$, which tells us that
the $yv$-orbifold sphere near a orbifold point correspond to a fiber of $T^*U_2\backslash U_2$ as claimed.

\end{proof}

\begin{proof}[Proof of Theorem \ref{t:lagrangian}]
By Lemma \ref{l:constructLag}, we have $g$ Lagrangian tori representing the same class $A$ such that
each  transversally intersects a Lagrangian sphere in a symplectic $K3$ surface $X$.
Let the homology class of the sphere be $B$.
We smooth out the intersection points by local Lagrangian surgery \cite{Pol} and result in an embedded Lagrangian genus $g$ surface $L$.

Let $U$ be the unit cotangent disk bundle and identify it with a Weinstein neighborhood of $L$.
Then, the complement of the interior of $U$ gives a Calabi-Yau cap $P$ for $Y$.
Notice that $[L]$ is in the span of  $A$ and $B$ and the intersection form restricted to the subspace spanned by $A$ and $B$ is given by
\[ \left( \begin{array}{cc}
0 & 1 \\
1 & -2
\end{array} \right) \]
which is equivalent to
\[ H=\left( \begin{array}{cc}
0 & 1 \\
1 & 0
\end{array} \right). \]

As a result, the orthogonal complement of this subspace has intersection matrix $-2E_8 \oplus 2H$.
On the other hand, $(g-2)A-B$ is orthogonal to $[L]=gA+B$.
In other word, the bilinear form $-2E_8\oplus2H\oplus(2-2g)$ embeds into the intersection form of $P$, where $2-2g$ corresponds to the direction spanned by the class $(g-2)A-B$, which has self-intersection $2-2g$.

Note that $H_2(Y;\mathbb{Z})=H^1(Y;\mathbb{Z})=\mathbb{Z}^{2g}$.
As a circle bundle, the generators of $H_2(Y;\mathbb{Z})$ are given by a loop from the base $L$ times the circle fibers.
It is the boundary of the same loop of the base times the disk fiber in $U$, which means that $H_2(Y;\mathbb{Z}) \to H_2(U;\mathbb{Z})$ is a zero map.
From the long exact sequence
$$0 \to H_2(Y;\mathbb{Z}) \to H_2(U;\mathbb{Z}) \oplus H_2(P;\mathbb{Z}) \to H_2(X;\mathbb{Z})=\mathbb{Z}^{22}$$
we see that $H_2(Y;\mathbb{Z}) \to H_2(P;\mathbb{Z})$ is an injection.
Since $H_2(X;\mathbb{Z})$ has no torsion and $H_2(Y;\mathbb{Z})$ is free, both $H_2(U;\mathbb{Z})$ and $H_2(P;\mathbb{Z})$ do not have torsion.
Hence we know that $H_2(P;\mathbb{Z})=\mathbb{Z}^{2g+21}$
and the intersection matrix of $P$ is given by
$-2E_8\oplus2H\oplus(2-2g) \oplus (0)^{2g}$, where $(0)^{2g}$ corresponds to the subspace spanned by the image of $H_2(Y;\mathbb{Z})$.

From the intersection form of $P$, we see that $P$ cannot embed into any uniruled manifold or minimal symplectic Calabi-Yau surface
 other than a homology $K3$ surface.
Let $N$ be any exact filling of $Y$,
the glued symplectic manifold $P \cup N$ has to be a minimal integral homology $K3$ surface denoted as $\overline{X}$, by the first paragraph in the proof of Theorem \ref{t:CY} and Remark \ref{r:integralK3}.
It implies that $N$ has Euler characteristic $e(N)=2-2g$ and signature $\sigma(N)=1$.
In particular, $b_2(N) \ge 1$.

By the long exact sequence
$$H_4(\overline{X};\mathbb{Z})\to H_3(Y;\mathbb{Z}) \to H_3(N;\mathbb{Z}) \oplus H_3(P;\mathbb{Z}) \to 0$$
and the fact that the first map is always an isomorphism, we have $H_3(N;\mathbb{Z})=H_3(P;\mathbb{Z})=0$.
Next the long exact sequence
$$0 \to H_2(Y;\mathbb{Z}) \to H_2(N;\mathbb{Z}) \oplus H_2(P;\mathbb{Z}) \to \mathbb{Z}^{22}$$
tell us that $H_2(N;\mathbb{Z})=\mathbb{Z}$ or $H_2(N;\mathbb{Z})=0$ because $H_2(P;\mathbb{Z})=\mathbb{Z}^{2g+21}$ and $H_2(Y;\mathbb{Z})=\mathbb{Z}^{2g}$.
The latter one is ruled out by the fact that $b_2(N) \ge 1$ so $H_2(N;\mathbb{Z})=\mathbb{Z}$.

Note that the $-2E_8\oplus2H$ lattice from $P$ embed into the intersection form of $\overline{X}$.
Also, both the generator $[S_N]$ of  $H_2(N;\mathbb{Z})$ and the $(g-2)A-B$ class in $P$ lies in the orthogonal complement of
$-2E_8\oplus2H$ in $\overline{X}$.
By the classification of unimodular bilinear form, the orthogonal complement of $-2E_8\oplus2H$ in $\overline{X}$ is $H$.
This together with the fact that $[S_N]$ is orthogonal to $(g-2)A-B$ in $H$ implies that $[S_N]$ has self-intersection $k^2(2g-2)$ for some positive integer $k$
(because the primitive class orthogonal to $(g-2)A-B$ has self-intersection $2g-2$).

Finally, we want to determine $H_1(N;\mathbb{Z})$ using the fact that $[S_N]^2=k^2(2g-2)$.
From the long exact sequence
$$ 0 \to H^3(N;\mathbb{Z}) \oplus H^3(P;\mathbb{Z}) \to H^3(Y;\mathbb{Z}) \to H^4(\overline{X};\mathbb{Z})$$
and the fact that the last morphism is an isomorphism, we have $H_1(N,Y;\mathbb{Z})=H^3(N;\mathbb{Z})=0$.
Since we already know $H_2(N;\mathbb{Z})$ and $H_3(N;\mathbb{Z})$, the Euler characteristic of $N$ implies
the rank of $H_1(N;\mathbb{Z})$ is $2g$.
On the other hand, since $H_2(N;\mathbb{Z})=\mathbb{Z}$ is of rank one, we have that $H_2(N,Y;\mathbb{Z})$ is of rank one.
In the long exact sequence
$$H_2(N;\mathbb{Z}) \to H_2(N,Y;\mathbb{Z}) \to H_1(Y;\mathbb{Z}) \to H_1(N;\mathbb{Z}) \to 0$$
the map
$f:H_2(N;\mathbb{Z}) \to H_2(N,Y;\mathbb{Z})$ is given by multiplication by $[S_N]^2=k^2(2g-2)$ on the free generators of $H_2(N;\mathbb{Z})$ and $H_2(N,Y;\mathbb{Z})$.
The cokernal of $f$ contributes  a $\mathbb{Z}/(k^2(2g-2))\mathbb{Z}$  to $H_1(Y;\mathbb{Z})$.
Since $H_1(Y;\mathbb{Z})=\mathbb{Z}^{2g} \oplus \mathbb{Z}/(2g-2)\mathbb{Z}$, the torsion $\mathbb{Z}/(2g-2)\mathbb{Z}$ comes purely from the cokernal of $f$ and $k=1$.
It also implies that  $H_2(N,Y;\mathbb{Z})=\mathbb{Z}$ and $H_1(N;\mathbb{Z})=\mathbb{Z}^{2g}$.
We have now established the integral homology type as well as the intersection form for any exact filling $N$ of $Y$ and hence finished the proof.
\end{proof}

\begin{proof}[Proof of Corollary \ref{c:infiniteFamily}]
Let $Y_g$ be the standard unit cotangent bundle of an orinetable surface of genus $g$.
We note that $Y_g$ admits a Weinstein filling (See Example 11.12 (2) of \cite{CiEl12})
and hence a Stein (See Theorem 13.5 of \cite{CiEl12}) filling.

However, $Y_g$ has a semi-filling for $g>1$, call it $W_g$, with disconnected contact boundaries (Theorem 1.1 in \cite{Mc91}).
We can cap off the other boundary of $W_g$ by caps with arbitrarily large $b^+_2$ (See e.g. \cite{EtHo02}).
Hence, after blowing down the exceptional spheres, $W_g$ and these various caps can be glued together to give minimal strong fillings of $Y_g$ with arbitrarily large $b_2^+$.
Thus, $Y_g$ is not of strong Betti finite type.
\end{proof}

\begin{comment}
One should compare the proof above with the argument in \cite{OhOn08}, where they also use a semi-filling to construct infinitely many strong fillings.
It is not hard to see that the proof also implies that $Y$ does not have a uniruled cap, otherwise it will contradict to Theorem \ref{t:uniruled}.
It would be interesting to find more contact manifolds that have contact embedding into a closed symplectic Calabi-Yau surface ($c_1$ being torsion) but do not have a uniruled cap.
\end{comment}

\subsection{Calabi-Yau caps via Lefschetz fibration}

For practical use, we want to have families of Calabi-Yau cap examples with concrete descriptions in terms of Lefschetz fibration and open book decompositions.
We refer to \cite{GS99} and \cite{OS04} for the basics of Lefschetz fibrations, open books and their relations to Stein fillings and contact structures.
For constructions of caps, the readers can consult \cite{Ozb06}.

Throughout the whole paper, we use the following notation. Let $\Sigma_g^k$ be a compact connected oriented surface of genus $g$ with $k$ boundary components, and let $\textnormal{Map}(\Sigma_g^k)$ be the mapping class group of $\Sigma_g^k$, i.e. the set of isotopy classes of self-diffeomorphisms of $\Sigma_g^k$ which preserve orientations
and fix the boundary $\partial \Sigma_g^k$ pointwise. We put $\Sigma_g=\Sigma_g^0$.
We denote by $\delta_1,\delta_2,\dots, \delta_k$ the boundary parallel curves of $\Sigma_g^k$. For a curve $C$ in $\Sigma_g^k$, let us denote the positive (i.e.\ right handed) Dehn twist along $C$ by $t_{C}$.

We recall how to construct a closed Lefschetz fibration of genus $g$ over $S^2$ by attaching some smooth 4-manifold to a Lefschetz fibration over $D^2$ with bounded fiber (cf.\ \cite{GS99}). Let $X$ be a Lefschetz fibration over $D^2$ with fiber $\Sigma_g^k$ whose induced open book on the boundary $\partial X$ is $(\Sigma_g^k, t_{\delta_k}^{i_k}\circ t_{\delta_{k-1}}^{i_{k-1}}\circ\dots \circ t_{\delta_{1}}^{i_{1}})$ $(i_1,i_2,\dots, i_k\geq 1)$.
By attaching a 2-handle to each binding component of the open book with page framing $0$, we get a Lefschetz fibration $X'$ over $D^2$ with closed fiber $\Sigma_g$.
By gluing $\Sigma_g\times D^2$ to $X'$, we obtain a closed Lefschetz fibration $\widetilde{X}$ of genus $g$ over $S^2$.

In terms of handles, the last gluing corresponds to the following operation. We first attach a 2-handle to $X'$ along a meridian of the attaching circle of the
each 2-handle attached to $X$. The Seifert framings of these 2-handles are $-i_1, -i_2, \dots, -i_k$, respectively. We then attach $2g+k-1$ 3-handles and one 4-handle to the resulting 4-manifold.

Let $LF_{g,k,I}$ be the smooth 4-manifold $\widetilde{X}- \textnormal{int}\, X$ for $I=(i_1,i_2,\dots, i_k)$, and put $LF_{g,k}=LF_{g,k,(1,1,\dots,1)}$. This is the neighborhood of a regular fiber and pairwise disjoint sections of $\widetilde{X}$. In particular, $LF_{g,k,I}$ is a plumbing of $\Sigma_{g}\times D^2$ and $D^2$-bundles over $S^2$ with Euler numbers $-i_1, -i_2,\dots, -i_k$.

Let $(Y_{g,k,I},\xi_{g,k,I})$ be the contact 3-manifold supported by the open book decomposition
$(\Sigma_{g}^k, t_{\delta_k}^{i_k}\circ t_{\delta_{k-1}}^{i_{k-1}}\circ\dots \circ t_{\delta_{1}}^{i_{1}})$ of the boundary of $X$, and put
$(Y_{g,k},\xi_{g,k})=(Y_{g,k,(1,1,\dots,1)},\xi_{g,k,(1,1,\dots,1)})$. We see a symplectic structure on $LF_{g,k}$ compatible with this contact structure using Gay's cap.

\begin{lemma}[cf.\ Gay~\cite{Ga03}, \cite{Ga03c}]\label{lem: LF cap} For any $g\geq 0$ and $k\geq 1$, $LF_{g,k}$ admits a symplectic cap structure such that
the contact structure on the concave boundary is isomorphic to $(Y_{g,k},\xi_{g,k})$. Furthermore, the suface of genus $g$ and the spheres in the plumbing are
symplectic submanifolds.
\end{lemma}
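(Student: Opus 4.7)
The plan is to derive the symplectic cap structure from the fact that $\widetilde{X}$ is a closed Lefschetz fibration, combined with the complementary fact that $X$---whose monodromy is a product of positive Dehn twists along boundary-parallel curves---admits a Stein structure inducing $\xi_{g,k}$ on $\partial X$.

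First I would invoke the Thurston--Gompf theorem for Lefschetz fibrations (cf.\ \cite{GS99}): since $\widetilde{X} \to S^{2}$ is a closed genus-$g$ Lefschetz fibration, and the fiber class together with each section class admits a cohomology class pairing positively with them, $\widetilde{X}$ carries a symplectic form $\omega$ for which a regular fiber and the $k$ sections of self-intersection $-1$ are simultaneously symplectic submanifolds. On the $X$ side, since the monodromy $t_{\delta_{k}}\circ\cdots\circ t_{\delta_{1}}$ is a product of positive Dehn twists, the standard correspondence between positive Lefschetz fibrations over $D^{2}$ with bounded fiber and Stein fillings produces a Stein structure on $X$ whose induced contact structure on $\partial X$ is supported by the open book $(\Sigma_{g}^{k}, t_{\delta_{k}}\circ\cdots\circ t_{\delta_{1}})$, i.e.\ is $\xi_{g,k}$. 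After arranging compatibility so that the Liouville vector field of $X$ extends the restriction of a Liouville field for $\omega$ to a collar of $\partial X$, the hypersurface $\partial X = -\partial LF_{g,k}$ becomes convex as seen from $X$ and therefore concave as seen from $LF_{g,k}$. Thus $(LF_{g,k},\, \omega|_{LF_{g,k}})$ is a symplectic cap of $(Y_{g,k},\xi_{g,k})$, and the fiber and sections of $\widetilde{X}$, which lie in $LF_{g,k}$ by construction of the plumbing, remain symplectic.

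The main technical obstacle is this compatibility: arranging the global Thurston--Gompf form on $\widetilde{X}$ to agree, near $\partial X$, with a Liouville form coming from the Stein structure on $X$. The cleanest way to bypass it is to invoke Gay's direct construction in \cite{Ga03} and \cite{Ga03c}, which builds symplectic caps associated to configurations of symplectic surfaces by attaching symplectic round $2$-handles along binding components of an open book. Applied to the plumbing graph consisting of a central genus-$g$ vertex of self-intersection $0$ joined by single edges to $k$ spherical vertices of self-intersection $-1$, one verifies that Gay's numerical hypotheses on the self-intersections relative to the edge count are satisfied, and his recipe then delivers a symplectic cap structure on $LF_{g,k}$ whose concave boundary carries precisely $\xi_{g,k}$ and in which each vertex of the plumbing graph is realized as a symplectic submanifold. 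This second route circumvents the compatibility issue entirely, since the cap is assembled intrinsically from the configuration data rather than extracted from a global form on $\widetilde{X}$.
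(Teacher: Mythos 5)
Your first route founders on exactly the compatibility issue you name --- a Thurston--Gompf form on $\widetilde{X}$ has no reason to make $\partial X$ a hypersurface of contact type, let alone to induce $\xi_{g,k}$ there --- so everything rests on your second route, and that is where the genuine gap lies. The quantity governing Gay's construction in \cite{Ga03} is, for each surface $S_i$ in the configuration, its self-intersection number plus the number of its intersection points with the other surfaces; this is the number of binding components that $S_i$ contributes to the boundary open book, and Gay's theorem requires it to be strictly positive at every vertex. For the plumbing graph of $LF_{g,k}$ it equals $k>0$ at the central genus-$g$ vertex but $-1+1=0$ at each of the $k$ sphere leaves, so the ``numerical hypotheses'' you claim to verify in fact fail, and Gay's theorem does not apply off the shelf to this configuration. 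The later Gay--Stipsicz/Li--Mak criterion (quoted in the paper as Theorem \ref{criterion}) is satisfied here --- take $z=(2,1,\dots,1)$ --- but it only yields concavity of some neighborhood; it does not identify the boundary open book as $(\Sigma_g^k, t_{\delta_k}\circ\cdots\circ t_{\delta_1})$, and that identification is the substantive content of the lemma, needed later to glue $LF_{g,k}$ onto $W'$ in Proposition \ref{LF: CY}.

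The paper's proof is designed precisely to dodge this. It applies Gay's construction only to a single genus-$g$ symplectic surface of self-intersection $+k$, where the positivity hypothesis holds trivially and the boundary open book is exactly $(\Sigma_g^k, t_{\delta_k}\circ\cdots\circ t_{\delta_1})$; this produces the cap $G_{g,k}$. It then blows up $G_{g,k}$ at $k$ points of the symplectic surface --- an interior operation leaving the concave boundary and its contact structure untouched --- to create the genus-$g$ surface of square $0$ together with the $k$ symplectic $(-1)$-spheres, and finally slides the original $2$-handles over the $(-1)$-framed unknots to identify $G_{g,k}\#k\overline{\mathbb{C}\mathbb{P}^2}$ with $LF_{g,k}$ relative to the boundary. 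To salvage your approach you would need either to reproduce this blow-up-and-handle-slide argument, or to supply a separate verification that the concave structure produced by the weaker criterion on this degenerate configuration is supported by the stated open book.
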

\begin{proof}Gay (See \cite{Ga03}, \cite{Ga03c}) constructed a symplectic cap structure on the $D^2$-bundle over $\Sigma_g$ such that the boundary contact structure
is supported by the open book $(\Sigma_{g,k}, t_{\delta_k}\circ t_{\delta_{k-1}}\circ\dots \circ t_{\delta_{1}})$. Furthermore, it contains a symplectic surface of
genus $g$ with the self-intersection number $k$. From his construction, it is easy to see that Gay's cap (say $G_{g,k}$) is obtained from $Y_{g,k}\times [0,1]$ by
attaching a 2-handle to $Y_{g,k}\times \{1\}$ along each binding component of the open book with page framing $+1$ and then attaching 3-handles and a 4-handle to
the resulting boundary. By blowing up this cap at $k$ points, we obtain a plumbing of symplectic surface of genus $g$ with self-intersection $0$ and $k$ spheres
with self-intersection $-1$. By sliding each of the aforementioned  2-handles of $G_{g,k}$ over the $-1$-framed unknot corresponding to the 2-handle of each
$\overline{\mathbb{C}\mathbb{P}^2}$, we easily see that $G_{g,k}\#k\overline{\mathbb{C}\mathbb{P}^2}$ has the same handle decomposition as $LF_{g,k}$, since 3-
and 4-handles are attached uniquely.
\end{proof}

\begin{rmk}This proof tells that 2-handles of $LF_{g,k}$ and $G_{g,k}\#_k\overline{\mathbb{C}\mathbb{P}^2}$ are attached to $Y_{g,k}$ along the same framed link.
\end{rmk}

A simple family of examples of Calabi-Yau caps are the $D^2$-bundle over $\Sigma_{g}$ with Euler number $2g-2$ $(g\geq 2)$
which is  Gay's cap $G_{g, 2g-2}$, and the concave boundary contact structure is supported by the Stein filable open book $(\Sigma_g^{2g-2}, t_{\delta_{2g-1}}\circ\dots \circ t_{\delta_{1}})$. Indeed the adjunction formula immediately tells that this cap is Calabi-Yau, since $G_{g,2g-2}$ contains a symplectic surface of genus $g$ with the self-intersection number $2g-2$.
%This follows from the adjunction formula since $G_{g,2g-2}$ contains a symplectic surface of genus $g$ with the self-intersection number $2g-2$.

The next proposition gives us many examples of Calabi-Yau caps $P$ (with $b_1(P)\leq 1$).
One can construct many examples of Stein fillable open books satisfying the assumption of the proposition. For example, consider the chain relation in $\textnormal{Map}(\Sigma_2^2)$ (see Lemma 21 in \cite{Waj99}). This relation gives rise to a monodromy factorization of a genus-$2$ Lefschetz fibration on $K3\#2\overline{\mathbb{C}{P}^2}$ (cf.\ P.201 in \cite{Sa13} and \cite{GS99}). Modifying the factorization by the Hurwitz equivalence, one can construct various sets of curves in $\Sigma_2^2$ satisfying the assumption below.

\begin{prop}\label{LF: CY}Let $(Y,\xi)$ be a contact 3-manifold supported by an open book $(\Sigma_g^{k}, \varphi)$ with $g\geq 1$ and $k\geq 1$,
where $\varphi=t_{C_n}\circ t_{C_{n-1}}\circ \dots \circ t_{C_1}$ for some homotopically non-trivial simple closed curves $C_1,\dots, C_n$ in $\Sigma_g^k$. Suppose that there
exist (possibly empty set of) homotopically non-trivial simple closed curves $C_{n+1}, C_{n+2},\dots, C_m$ $(m\geq n)$ in $\Sigma_g^{k}$ satisfying the following conditions.
\begin{itemize}
 \item $t_{C_m}\circ t_{C_{m-1}}\circ \dots \circ t_{C_1}=t_{\delta_{k}}\circ t_{\delta_{k-1}}\circ\dots \circ t_{\delta_{1}}$.
 \item The genus $g$ Lefschetz fibration $Z$ over $S^2$ with monodromy factorization $t_{C_m}\circ t_{C_{m-1}}\circ \dots \circ t_{C_1}$ is diffeomorphic
 to a symplectic Calabi-Yau surface blown up at $k$ points. Here we regard $C_1,\dots, C_m$ as curves in $\Sigma_g$ through the natural inclusion $\Sigma_g^{k}\hookrightarrow \Sigma_g$.
\end{itemize}
Then $(Y,\xi)$ admits a Calabi-Yau cap whose fundamental group is the quotient group $\pi_1(\Sigma_g)/\langle C_{n+1}, C_{n+2}, \dots, C_m \rangle$, where $\langle C_{n+1}, C_{n+2}, \dots, C_ m\rangle$ is the normal closure of the subgroup generated by these curves (More precisely, a base point of $\Sigma$ is fixed, and these curves are isotoped so that they pass through the base point.).
\end{prop}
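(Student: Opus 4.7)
The plan is to realize the desired cap as the complement of a Stein filling inside a closed Lefschetz fibration produced by the hypothesis, and then symplectically blow down its exceptional sections to make $c_1$ torsion.

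Let $\widetilde{X}$ be the Lefschetz fibration over $D^2$ with fiber $\Sigma_g^{k}$ and ordered vanishing cycles $C_1,\dots,C_m$. The first bullet identifies the boundary open book of $\widetilde{X}$ with $(\Sigma_g^k, t_{\delta_k}\circ\cdots\circ t_{\delta_1})$, which supports $(Y_{g,k},\xi_{g,k})$; hence by Lemma~\ref{lem: LF cap} we obtain a closed symplectic 4-manifold $Z = \widetilde{X} \cup LF_{g,k}$ in which a regular fiber and the $k$ sections inside $LF_{g,k}$ are symplectic submanifolds of self-intersection $0$ and $-1$ respectively. Decompose $\widetilde{X} = X \cup M$, where $X$ is the Lefschetz sub-fibration with vanishing cycles $C_1,\dots,C_n$---so its boundary open book is $(\Sigma_g^k,\varphi)$ and $X$ is a Stein filling of $(Y,\xi)$---and $M$ is a Lefschetz fibration over an annulus with vanishing cycles $C_{n+1},\dots,C_m$. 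Set $P := M \cup LF_{g,k} = Z \setminus \textnormal{int}\, X$; this is a strong symplectic concave filling of $(Y,\xi)$.

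Next, I would upgrade $P$ to a Calabi--Yau cap by symplectically blowing down the $k$ symplectic $(-1)$-sphere sections (all contained in the interior of $P$), producing a cap $P'$ inside a closed symplectic 4-manifold $Z'$ with $b_2(Z') = b_2(Z) - k$. By the second bullet, $Z$ is diffeomorphic to a symplectic Calabi--Yau surface $Z_0$ blown up at $k$ points, so $b_2(Z') = b_2(Z_0)$, and $\kappa^s(Z) = \kappa^s(Z_0) = 0$ since $\kappa^s$ is an oriented-diffeomorphism invariant (Subsection~\ref{ss:uniruledCY}). Kodaira dimension is preserved under symplectic blow-down, so $\kappa^s(Z') = 0$. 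If $Z'$ were not minimal, Taubes' SW theory would supply a symplectic $(-1)$-sphere in $Z'$ whose blow-down would yield a symplectic manifold with $b_2 < b_2(Z_0)$; but the minimal model of any further blow-down of $Z$ is still $Z_0$ (which is non-uniruled, hence has a unique minimal model), and every minimal model realizes the smallest $b_2$ in its blow-down equivalence class, contradiction. Therefore $Z'$ is a minimal symplectic Calabi--Yau surface with torsion $c_1$, and thus $c_1(P') = c_1(Z')|_{P'}$ is torsion. Since the blow-downs take place away from the contact boundary, $(Y,\xi)$ remains the concave boundary of $P'$, so $P'$ is a Calabi--Yau cap of $(Y,\xi)$.

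For the fundamental group, symplectic blow-downs of $(-1)$-spheres preserve $\pi_1$, so $\pi_1(P') = \pi_1(P)$. Viewing $P$ as a Lefschetz fibration over a disk (the outer disk in $S^2$) with closed fiber $\Sigma_g$---obtained by regarding the $\Sigma_g \times D^2$ piece of $LF_{g,k}$ as the regular-fiber neighborhood---and vanishing cycles $C_{n+1},\dots,C_m$, the standard handle decomposition yields
\[
\pi_1(P) = \pi_1(\Sigma_g)/\langle C_{n+1}, C_{n+2},\dots,C_m\rangle,
\]
because each Lefschetz 2-handle along $C_j$ kills $[C_j]$ while the $(-1)$-sphere sections of $LF_{g,k}$ contribute nothing to $\pi_1$. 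The minimality argument in the second paragraph is the subtlest step; it is precisely what the second bullet of the hypothesis is tailored to enforce, by pinning down the correct $b_2$-count at the minimal level so that the blown-down total space is forced to be minimal Calabi--Yau.
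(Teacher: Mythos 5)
Your proof is correct and follows essentially the same route as the paper: realize the cap as the complement of the sub-Lefschetz-fibration with vanishing cycles $C_1,\dots,C_n$ inside the closed total space $Z=\widetilde{X}\cup LF_{g,k}$, blow down the $k$ symplectic $(-1)$-sections, and use the diffeomorphism invariance of $\kappa^s$ together with the uniqueness of minimal models (pinning down $b_2$) to force the blown-down closed manifold to be a minimal symplectic Calabi--Yau, hence to have torsion $c_1$, which restricts to a torsion $c_1$ on the cap; the $\pi_1$ computation via the handle decomposition also matches. The one step you leave implicit --- that $Z\setminus \textnormal{int}\, X$ really is a \emph{strong concave} filling inducing exactly $(Y,\xi)$ on its boundary --- is exactly what the paper's construction supplies: it builds the cap from the inside out by attaching symplectic $2$-handles along Legendrian realizations of $C_{n+1},\dots,C_m$ to the convex end of the symplectization of $(Y,\xi)$ and then gluing $LF_{g,k}$ via Lemma~\ref{lem: LF cap}, so the contact-type boundary condition is automatic rather than an appeal to compatibility of the ambient symplectic form with the Stein subdomain.
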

\begin{proof}

By the assumption, the genus $g$ Lefschetz fibration $Z$ over $S^2$ has  $k$ sections with self-intersection $-1$. Let $X$ denote the blow down
of $Z$ along these sections. Since $Z$ is diffeomorphic to the blow up of a symplectic Calabi-Yau surface, and sections are a symplectic submanifold of
some symplectic structure on $Z$, the uniqueness of minimal model shows that $c_1(X)$ of the symplectic structure on $X$ is torsion.

Here we construct a Calabi-Yau cap. Let $(W=Y\times [0,1], \omega)$ be a symplectization of $(Y,\xi)$. We note that $Y\times \{0\}$ and $Y\times \{1\}$ are concave and convex boundary of $W$, respectively. By the Legendrian realization principle and folding, we may assume that $C_{n+1}$ is a Legendrian knot in a page of the open book $(\Sigma_g^k,\varphi)$
of $Y\times \{1\}$ (see Section 8.6 of \cite{A14} and \cite{OS04AMS}). We then attach a symplectic 2-handle to $W$ along $C_{n+1}$ with the framing $-1$ relative to the page framing. The resulting convex boundary is supported by the open book $(\Sigma_g^k, t_{C_{n+1}}\circ \varphi)$. Repeating this process, we obtain a compact symplectic 4-manifold $W'$ such that the concave boundary is $(Y,\xi)$ and that the convex boundary $(Y',\xi')$ is supported by the open book $(\Sigma_g^k, t_{C_{m}}\circ \dots \circ t_{C_{n+1}}\circ \varphi)$.
By Lemma~\ref{lem: LF cap} and the assumption $t_{C_{m}}\circ \dots \circ t_{C_{n+1}}\circ \varphi=t_{\delta_{k}}\circ\dots \circ t_{\delta_{1}}$, we can glue the cap $LF_{g,k}$ to $W'$ so that the resulting symplectic manifold $W''$ is a cap of $(Y,\xi)$. By blowing down  $k$ sections with self-intersection $-1$ contained in $LF_{g,k}\subset W''$, we obtain a symplectic cap $P$. Since $c_1(X)$ is torsion, $c_1(P)$ is also torsion. Hence $P$ is a Calabi-Yau cap of $(Y,\xi)$. By the proof of Lemma~\ref{lem: LF cap} and this construction, it is easy to see that $W''$ is obtained from $LF_{g,k}$ by attaching 2-handles along each vanishing cycle
$C_i$ $(n+1\leq i\leq m)$ in $\Sigma_g^k \times \partial D^2(\subset \Sigma_g \times \partial D^2)$ with page framing $-1$. Therefore $\pi_1(W'')$ is isomorphic
to $\pi_1(\Sigma_g)/\langle C_{n+1}, \dots, C_m \rangle$. Since the blowing down preserves the fundamental group, $P$ also has the same fundamental group.
\end{proof}

\begin{rmk} We were informed by Baykur that Proposition \ref{LF: CY} and Proposition \ref{LF:uniruled cap} overlap
with recent results by Baykur, Monden and Van Horn-Morris in \cite{BaMoVHM14}, which were obtained earlier and from different point of view.
In \cite{BaMoVHM14}, they obtain the sharp upper bound of the length of possible factorization of the open book $(\Sigma_k^g, t_{\delta_k} \circ  t_{\delta_{k-1}}\circ \cdots \circ t_{\delta_1})$ for any $k \ge 2g-3$, which implies sharp Betti numbers bounds among Stein fillings that have Lefschetz fibration compatible with the open book.
In contrast, our result implies Betti number bounds (not necessarily sharp) on any exact (resp. minimal strong) fillings of the contact manifold supported by the open book when $k = 2g-2$ (resp. $k \ge 2g-1$).
Our result does not cover the case that $k=2g-3$ and it would be interesting to know whether one can bound the Betti numbers for any exact/Stein fillings in this case.
\end{rmk}

\subsection{Calabi-Yau caps as divisor caps}\label{ss:divisor}

A {\it symplectic divisor} $D=C_1 \cup \dots \cup C_k$ in a symplectic manifold $(X,\omega)$ is a collection of closed embedded symplectic surfaces $C_i$
such that every intersection between any two $C_i$'s are positive and transversal and no three of the distinct $C_i$'s intersect at a common point.
If every intersection is also $\omega$-orthogonal, then we call $D$ an $\omega$-orthogonal divisor.
To each symplectic divisor, one can associate its augmented graph.
The augmented graph $(\Gamma,g,s,a)$ (or simply denoted as $(\Gamma,a)$ if no confusion would arise) is a weighted finite graph with vertices representing the surfaces and each edge joining two vertices representing an intersection between the two surfaces corresponding to the two vertices.
Moreover, each vertex $v_i$ is weighted by its genus (a non-negative integer $g_i$), its self-intersection number (an integer $s_i$) and its symplectic area (a positive real number $a_i$).
The intersection matrix associated to $(\Gamma,g,s,a)$ is denoted by $Q_{\Gamma}$.

\begin{defn}
 Suppose $(\Gamma,a)$ is an augmented graph with $k$ vertices with $a=(a_1,\dots,a_k)$.
Then, we say that $(\Gamma,a)$ satisfies the positive (resp. negative) Gay-Stipsicz  criterion   if there exist $z \in (0,\infty)^k$ (resp $(-\infty,0]^k$) such that $Q_{\Gamma}z=a$.
In either case, we will simply say that $(\Gamma,a)$ satisfies the GS criterion.
\end{defn}

For a symplectic divisor $D$ in $(X,\omega)$, the associated augmented graph is denoted by $(\Gamma_D,a)$.
We say that $(D,\omega)$ has a concave (resp. convex) neighborhood $P(D)$ if $P(D)$ is a strong capping (resp. strong filling) of its boundary.
If in every neighborhood $N$ of $(D,\omega)$, there is a concave (resp. convex) neighborhood $P(D) \subset N$, then we call $(D,\omega)$ a concave (resp. convex) divisor.

We recall two Theorems in \cite{LiMa14} (see also \cite{GaSt09} and \cite{McL14}).

\begin{thm}\label{criterion}
  Let $D \subset (X,\omega)$ be an $\omega$-orthogonal symplectic divisor.
Then, $D \subset (X,\omega)$ is a concave (resp. convex) divisor
if and only if $(\Gamma_D,a)$ satisfies the positive (resp. negative) GS criterion.
\end{thm}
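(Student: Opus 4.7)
The plan is to prove both directions by expressing the relative cohomology class $[(\omega,\alpha)] \in H^2(P(D),\partial P(D); \mathbb{R})$ (or its convex analogue) in terms of the divisor components, and then reading off the GS relation from the pairing formula in Lemma \ref{l:relCohProperties}. Throughout I assume $Q_\Gamma$ is non-degenerate so that $[C_1],\dots,[C_k]$ form a basis of $H_2$ of a tubular neighborhood of $D$; the degenerate case is handled by passing to the quotient of $H_2$ by the kernel of $Q_\Gamma$.

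For the necessity in the concave case, suppose $P(D)$ is a concave neighborhood, so $D$ is a deformation retract of $P(D)$ and $H_2(P(D);\mathbb{R}) = \bigoplus_i \mathbb{R}[C_i]$. By Lefschetz duality there exist real coefficients $z_i$ with $\mathrm{PD}[(\omega,\alpha)] = \sum_i z_i [C_i]$. Pairing both sides with $[C_j]$ and applying the second bullet of Lemma \ref{l:relCohProperties} (noting $\partial C_j = \emptyset$), the left-hand side becomes $a_j = \int_{C_j}\omega$ while the right-hand side becomes $\sum_i Q_{ij}z_i$; hence $Q_\Gamma z = a$. To see $z_i > 0$, I would identify $z_i$ with the $\omega$-area of a small symplectic disk fiber transverse to $C_i$ away from the intersections, by computing the intersection pairing via a geometric transverse slice; positivity then follows because $\omega$ is a genuine area form on each such disk.

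For the sufficiency in the concave case, given $z \in (0,\infty)^k$ with $Q_\Gamma z = a$, I would construct the neighborhood from local normal forms. On each symplectic disk bundle $\nu(C_i) \to C_i$ with Euler number $s_i$, put a standard symplectic form with base area $\int_{C_i}\omega$ and fiber area $z_i$; at each intersection $C_i \cap C_j$ the $\omega$-orthogonality hypothesis lets one identify a neighborhood with a bi-disk and glue the two local models compatibly. The GS relation $Qz = a$ is precisely the consistency condition ensuring that prescribing fiber areas $z_i$ is compatible with the given base areas $a_i$ after all intersection contributions are accounted for. On the complement of $D$, the obvious radial primitive $\alpha$ of $\omega$ (essentially $\tfrac{1}{2} z_i \rho_i^2 \, d\theta_i$ in each normal polar coordinate) then generates a Liouville vector field pointing away from $D$ because $z_i > 0$, so a small regular neighborhood of $D$ is concave.

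The convex case is entirely symmetric, with the Liouville vector field pointing into $D$ and the sign condition $z_i \le 0$ corresponding to inward radial flow. The main obstacle is the gluing step in the sufficiency direction: although each local disk-bundle model is standard, patching the local radial primitives into a single global Liouville form whose associated vector field has the correct uniform sign on the boundary of a fine regular neighborhood requires a careful cutoff-and-interpolation argument across the intersection bi-disks, and it is precisely here that the algebraic condition $Q_\Gamma z = a$ must be invoked to guarantee compatibility.
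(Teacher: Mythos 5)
First, note that the paper does not prove Theorem~\ref{criterion} at all: it is imported verbatim from \cite{LiMa14} (see also \cite{GaSt09} and \cite{McL14}), so there is no internal proof to compare against. Judged on its own, your proposal has two genuine gaps. In the necessity direction, your positivity argument fails. Writing $\mathrm{PD}[(\omega,\alpha)]=\sum_i z_i[C_i]$ and pairing with the classes $[C_j]$ to get $Q_\Gamma z=a$ is fine, but $z_i$ is \emph{not} the $\omega$-area of a transverse disk fiber $F_i$. By the second bullet of Lemma~\ref{l:relCohProperties}, $z_i=\mathrm{PD}[(\omega,\alpha)]\cdot[F_i]=\int_{F_i}\omega-\int_{\partial F_i}\alpha$, and the boundary action term is essential: it is exactly what makes $z_i$ come out non-positive in the convex case (e.g.\ a single $(-1)$-sphere has $z=-a<0$ even though every transverse disk has positive area). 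So positivity of $z$ cannot follow from ``$\omega$ is an area form on the fiber''; it must be extracted from the concavity hypothesis, i.e.\ from the sign of $\int_{\partial F_i}\alpha$ relative to the inward-pointing Liouville field, and this is precisely the nontrivial content of the ``only if'' direction in \cite{LiMa14}.

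In the sufficiency direction you have the sign convention backwards, and the reversal hides where $z_i>0$ is actually used. In this paper's convention a neighborhood of $D$ is concave when the Liouville field near the boundary points \emph{into} the neighborhood, i.e.\ toward $D$; a field pointing away from $D$ makes the neighborhood convex. The primitive $\tfrac12\rho^2\,d\theta$ in a normal fiber always generates the outward field $\tfrac12\rho\,\partial_\rho$, so your ``obvious radial primitive'' produces convexity regardless of the sign of $z_i$. To obtain concavity one must use the shifted primitive $(\tfrac12\rho^2-z_i)\,d\theta$, whose Liouville field $\frac{\rho^2-2z_i}{2\rho}\partial_\rho$ points toward $D$ precisely when $0<\rho^2<2z_i$ --- this is where $z_i>0$ enters, not as a ``consistency condition for fiber areas.'' Finally, you yourself flag the global patching of these local primitives across the $\omega$-orthogonal intersection points as ``the main obstacle'' and do not carry it out; that interpolation (done in \cite{GaSt09} and \cite{LiMa14} via explicit toric local models) is the bulk of the construction, so the sufficiency direction remains unproven as written.
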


\begin{thm}\label{non-negative implies concave}
  Let $D \subset (X,\omega_0)$ be a symplectic divisor.
If the intersection form of $D$ is not negative definite and $\omega_0$ restricted to the boundary of plumbing of $D$ is exact,
then $\omega_0$ can be deformed through a family of symplectic form $\omega_t$ making $D$ a symplectic divisor for each $t$ and such that $(D,\omega_1)$ is a concave divisor.
Moroever, the contact structure constructed on the boundary of the concave neighborhood depends only on the graph of $D$.
\end{thm}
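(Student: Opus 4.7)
The plan is to reduce the theorem to Theorem \ref{criterion} by producing a deformation $\omega_1$ of $\omega_0$ for which the area vector of $D$ satisfies the positive GS criterion and $D$ is $\omega_1$-orthogonal. Write $a(\omega) = (\int_{C_1}\omega, \dots, \int_{C_k}\omega)$. Since $\omega_0|_{\partial P(D)}$ is exact, one may add to $[\omega_0]$ any class coming from $H^2(P(D), \partial P(D); \mathbb{R})$ without disturbing the boundary; such a class pairs with $[C_i]$ via the intersection matrix $Q_\Gamma$, so $a$ can be shifted by any vector in the image of $Q_\Gamma$.

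The key linear-algebra step is to show that when $Q_\Gamma$ is not negative definite, the cone $Q_\Gamma((0,\infty)^k) \subset \mathbb{R}^k$ meets the coset $a(\omega_0) + \mathrm{Image}(Q_\Gamma)$. I would argue this using a non-negative eigenvector (or a non-negative kernel vector) $z_0$ of $Q_\Gamma$: starting from any candidate solution of the coset condition, adding a sufficiently large positive multiple of $z_0$ pushes all entries of $z$ into $(0,\infty)$ while leaving the coset fixed. Picking a resulting $z \in (0,\infty)^k$ yields a target area vector $a_1 = Q_\Gamma z$ lying in the same coset. Realize $a_1$ by a closed 2-form perturbation $\eta$ supported near $D$ and exact on $\partial P(D)$, and interpolate $\omega_t = \omega_0 + t\eta$, if necessary in small steps localized to tubular neighborhoods of individual $C_i$'s so that each component remains symplectically embedded and all intersections remain positive and transverse. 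A subsequent Weinstein-type isotopy near $D$, supported in a neighborhood of $D$ and preserving each $[C_i]$ as well as the areas, then makes $D$ orthogonal; Theorem \ref{criterion} applies to give the concave neighborhood.

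For the \emph{moreover} clause, once $D$ is $\omega_1$-orthogonal and satisfies the positive GS criterion, the contact structure on the boundary of the concave plumbing is built from an outward Liouville vector field assembled from standard models on disk bundles over each $C_i$, glued at the orthogonal intersections; the only continuous parameters are the choice of $z \in (0,\infty)^k$ and the areas. Varying these within the open positive cone and applying Gray stability produces an isotopy of the resulting contact structures, so the contact structure on the boundary depends only on the combinatorial data $(\Gamma, g, s)$. The main obstacle is controlling the interpolation $\omega_t$: preserving the symplectic divisor condition along a potentially large deformation requires decomposing $\eta$ into component-wise perturbations and applying a Moser-type argument at each stage to keep the intersections $\omega_t$-positive and the $C_i$ symplectic throughout.
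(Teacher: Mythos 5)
Your overall architecture is the right one: the paper does not actually prove this theorem (it is quoted from \cite{LiMa14}), but the proof there proceeds exactly as you propose at the top level --- deform the area vector until the positive GS criterion of Theorem \ref{criterion} is satisfiable, make $D$ $\omega$-orthogonal, and deduce the graph-dependence of the contact structure from the connectedness (indeed convexity) of the space of GS solutions together with Gray stability. The $\omega$-orthogonality step and the ``moreover'' clause are fine as sketched.

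The genuine gap is in the central linear-algebra step. Exactness of $\omega_0$ on $\partial P(D)$ is \emph{equivalent} to $a(\omega_0)\in \mathrm{Image}(Q_\Gamma)$ over $\mathbb{R}$ (the map $H^2(P(D),\partial P(D);\mathbb{R})\to H^2(P(D);\mathbb{R})$ is given by $Q_\Gamma$ in the bases associated to the $[C_i]$), so your coset $a(\omega_0)+\mathrm{Image}(Q_\Gamma)$ is just $\mathrm{Image}(Q_\Gamma)$ and the condition ``the cone $Q_\Gamma((0,\infty)^k)$ meets the coset'' is vacuously true; it encodes nothing. The condition you actually need, and never impose, is that the target area vector $a_1=Q_\Gamma z$ lie in $(0,\infty)^k$: symplectic surfaces have positive area, so if some entry of $a_1$ is nonpositive the endpoint of your deformation cannot be a symplectic divisor at all. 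Your mechanism for producing $z$ does not supply this: adding a multiple of a kernel vector leaves $Q_\Gamma z$ unchanged, and if $Q_\Gamma$ (connected graph, nonnegative off-diagonal entries) admits a nonnegative kernel vector then Perron--Frobenius upgrades it to a strictly positive one, so $\mathrm{Image}(Q_\Gamma)=(\ker Q_\Gamma)^{\perp}$ misses the positive orthant entirely and \emph{no} admissible $z$ exists. The correct argument applies Perron--Frobenius to $Q_\Gamma+cI$ (irreducible; connectivity of $D$ is needed here and is implicit in the theorem) to get $v>0$ with $Q_\Gamma v=\lambda_{\max}(Q_\Gamma)v$; non-negative-definiteness fails, so $\lambda_{\max}\geq 0$; the case $\lambda_{\max}=0$ is ruled out because then $v\in\ker Q_\Gamma$ and exactness would force $\langle a(\omega_0),v\rangle=0$, impossible with $a(\omega_0),v>0$; hence $\lambda_{\max}>0$ and $z=v$ gives $a_1=\lambda_{\max}v\in(0,\infty)^k$. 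Note this is where the exactness hypothesis does real work, beyond letting you shift the class. With $a_1>0$ the segment $(1-t)a(\omega_0)+ta_1$ stays in the positive orthant, and the remaining issue you flag --- realizing a prescribed path of area vectors, including area decreases, by symplectic forms keeping $D$ a divisor --- is a genuine technical point handled by a dedicated deformation lemma in \cite{LiMa14} rather than by naive inflation.
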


\begin{rmk}\label{non-degenerate implies exact}
 Suppose $(D,\omega)$ is a symplectic divisor.
 If the intersection form $Q_D$ of $D$ is non-degenerate, then $\omega$ is exact on the boundary (cf. Lemma 2.4 of \cite{LiMa14}).
 In particular, $D$ is concave or convex possibly after a symplectic deformation.
\end{rmk}

Having the Theorems above, to determine whether a neighborhood of a symplectic divisor gives a uniruled cap is a purely linear algebraic computation.

\begin{eg}\label{Example of Calabi-Yau Cap} Consider the following  symplectic divisor
with genus of the central vertex $v_1$ being $g$ and genera of the others being $0$:
$$    \xymatrix{
        \bullet^{-2}_{v_2} \ar@{-}[r]& \bullet^{2g-2,g}_{v_1} \ar@{-}[r] \ar@{-}[d]& \bullet^{-2}_{v_4} \\
			       & \bullet^{-2}_{v_3} \\
	}
$$
The $v_i$  have self-intersection numbers $2g-2$, $-2$, $-2$ and $-2$, respectively.
If $g > 1$, the intersection matrix is not negative definite and non-degenerate so there is a choice of symplectic form making it a concave divisor by Theorem \ref{non-negative implies concave}.
By the adjunction formula, it is easy to see that this concave neighborhood is a Calabi-Yau cap.
This configuration can be found in a rational manifold so the boundary of this cap is strongly fillable. In fact, it is not hard to show that this configuration is the support of an effective ample line bundle in an irrational ruled manifold and hence the contact structure is Stein fillable but we will not pursue further here.
\end{eg}

\section{Uniruled caps and adjunction caps}\label{s:UniAdj}

To motivate the definition of a uniruled cap, we recall a Theorem in \cite{Liu96} and \cite{OhOn96}.

\begin{thm}\label{uniruled closed}
 Let $(X,\omega)$ be a closed symplectic manifold with $c_1(X)\cdot[\omega] > 0$.
Then, $X$ is uniruled  (i.e.\ rational or ruled).
\end{thm}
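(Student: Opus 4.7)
The plan is to reduce to the minimal case by passing to a minimal model and then to read off $\kappa^s = -\infty$ directly from the definition given in Subsection \ref{ss:uniruledCY}.

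First I would fix a minimal model $(Z, \tau)$ of $(X, \omega)$ obtained by successively blowing down exceptional $\omega$-symplectic spheres. The key computation is to track how the pairing $c_1 \cdot [\omega]$ behaves under a single symplectic blow-down. If $\pi \colon X \to Z$ is a blow-up at one point with exceptional curve $E$ of symplectic area $\epsilon > 0$, then $[\omega] = \pi^{*}[\tau] - \epsilon\, PD(E)$ and $c_1(X) = \pi^{*} c_1(Z) - PD(E)$, so a short intersection calculation using $E \cdot E = -1$ gives
\[
c_1(X) \cdot [\omega] \;=\; c_1(Z) \cdot [\tau] - \epsilon.
\]
Iterating over the finite chain of blow-downs needed to reach $(Z, \tau)$ shows that $c_1(Z) \cdot [\tau] > c_1(X) \cdot [\omega] > 0$, i.e.\ $K_\tau \cdot [\tau] < 0$ on the minimal model.

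Now the definition of Kodaira dimension in Subsection \ref{ss:uniruledCY} immediately gives $\kappa^s(Z, \tau) = -\infty$, and by definition $\kappa^s(X, \omega) = \kappa^s(Z, \tau) = -\infty$. Hence $(X, \omega)$ is a symplectic uniruled manifold in the sense of the paper. Applying Theorem \ref{t:Uniruled} to $(Z, \tau)$ then identifies the minimal model as $\mathbb{CP}^{2}$, $S^{2}\times S^{2}$, or an $S^{2}$-bundle over a positive-genus surface, and therefore $X$ is a blow-up of one of these — that is, $X$ is rational or ruled.

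The conceptual work has already been done elsewhere: the non-trivial content of this statement lives inside the well-definedness of $\kappa^s$ (that a minimal $(Z,\tau)$ cannot satisfy $K_\tau \cdot [\tau] = 0$ together with $K_\tau^{2} > 0$, proved via Taubes' SW theory in \cite{Li06s}) and inside Theorem \ref{t:Uniruled} itself. The only place where a genuine obstacle could arise in this proof is therefore the blow-down bookkeeping if one insists on treating the symplectic blow-down without the minimal model machinery; but since we are allowed to quote the results of Subsection \ref{ss:uniruledCY}, the argument reduces to the sign computation above and an application of the classification theorem.
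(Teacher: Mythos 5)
Your argument is correct. Note first that the paper offers no internal proof of this statement: it is recalled directly from \cite{Liu96} and \cite{OhOn96}, so there is no argument of the authors' to compare against. What you have supplied is a reduction showing that, granting the other quoted ingredients of Subsection \ref{ss:uniruledCY} (existence of a minimal model, the definition and well-definedness of $\kappa^s$, and the classification in Theorem \ref{t:Uniruled}), the statement follows formally. Your blow-down bookkeeping checks out: with $[\omega]=\pi^{*}[\tau]-\epsilon\, PD(E)$ and $c_1(X)=\pi^{*}c_1(Z)-PD(E)$, the cross terms vanish and $PD(E)^2=-1$ gives $c_1(X)\cdot[\omega]=c_1(Z)\cdot[\tau]-\epsilon$, so the pairing strictly increases under blow-down --- exactly the fact the authors themselves invoke in the proof of Theorem \ref{t:CY} (``blow-down increases $c_1(X)\cdot[\omega]$''). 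Hence the minimal model satisfies $K_\tau\cdot[\tau]<0$, so $\kappa^s=-\infty$, and Theorem \ref{t:Uniruled} identifies it as $\mathbb{CP}^2$, $S^2\times S^2$, or an $S^2$-bundle, making $X$ rational or ruled. The one caveat worth keeping in mind, which you already acknowledge, is that this is a consistency check rather than an independent proof: Theorem \ref{t:Uniruled} and the present statement rest on the same Taubes--Seiberg--Witten input from \cite{Liu96} and \cite{OhOn96}, so the genuine content is still being imported from the cited classification.
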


Therefore, uniruled caps are the counterpart of uniruled manifolds for compact symplectic manifolds with boundary.

%Another kind of caps which is important to us is adjunction caps.

\begin{defn}
  An {\bf adjunction cap} of a contact 3-manifold $(Y,\xi)$ is a compact symplectic 4-manifold $(P,\omega)$ with strong concave boundary contactomorphic to $(Y,\xi)$
  such that there exist a smoothly embedded (not necessarily symplectic) genus $g$ surface $S$ in $P$ with
  $[S]^2 \ge \max \{2g-1,0\}$.
  If $[S]^2=0$, we further require that $[S] \in H_2(P,\mathbb{Q})$ does not lie in the image of $H_2(Y,\mathbb{Q})$ under the natural map induced by inclusion.
\end{defn}

The definition of adjunction caps is motivated by the following proposition
(Baykur informed us that he is aware of this statement but we can not find an explicit  reference so we present an argument here).

\begin{prop}\label{prop:adjunction}
Let $(X,\omega)$ be a closed symplectic manifold with a smoothly embedded (not necessarily symplectic) genus $g$ surface $S$ having self-intersection $[S]^2 \ge \max \{2g-1,0 \}$.
If $[S]^2=0$, we also assume that $[S]$ represents a non-trivial class in $H_2(X,\mathbb{Q})$.
Then $(X,\omega)$ is uniruled.
\end{prop}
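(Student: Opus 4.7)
The plan is to argue by contradiction using the generalized Seiberg--Witten adjunction inequality applied to the canonical class. Assuming $(X,\omega)$ is not uniruled, the classification recalled in Subsection~\ref{ss:uniruledCY} gives $\kappa^{s}(X,\omega)\geq 0$. By Taubes' theorem $\mathrm{SW}=\mathrm{Gr}$ (together with its extension to $b^{+}=1$ in the symplectic chamber containing $[\omega]$), the canonical spin$^{c}$ structure $\mathfrak{s}_\omega$ has non-vanishing Seiberg--Witten invariant, so $K_\omega=-c_{1}(\mathfrak{s}_\omega)$ is a Seiberg--Witten basic class of $X$.

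The main engine will be the Ozsv\'ath--Szab\'o generalized adjunction inequality (with chamber-refined analogues in the $b^{+}=1$ case, cf.\ Li--Liu and Fintushel--Stern): for any smoothly embedded closed oriented surface $\Sigma\subset X$ of genus $g$ with $[\Sigma]^{2}\geq 0$ and $[\Sigma]$ not a torsion class in $H_{2}(X;\mathbb{Z})$, every basic class $L$ satisfies
\[ 2g-2\ \geq\ [\Sigma]^{2}+|L\cdot[\Sigma]|. \]
The proposition's hypothesis ensures $[S]$ is non-torsion: if $[S]^{2}>0$ this is automatic since torsion classes square to zero, and if $[S]^{2}=0$ the assumption $[S]\neq 0$ in $H_{2}(X;\mathbb{Q})$ is precisely the non-torsion condition. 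Applying the inequality with $L=K_\omega$ and $\Sigma=S$, together with $[S]^{2}\geq \max\{2g-1,0\}$, yields $2g-2\geq [S]^{2}\geq 2g-1$ when $g\geq 1$ and $-2\geq [S]^{2}\geq 0$ when $g=0$. Both are absurd, so the assumption that $(X,\omega)$ is not uniruled must fail.

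The main technical subtlety will be the $b^{+}(X)=1$ case, where both Taubes' non-vanishing result and the adjunction inequality become chamber-dependent. The plan is to work throughout in the symplectic chamber of $[\omega]$, in which $K_\omega$ remains a basic class (this is what the hypothesis $\kappa^{s}\geq 0$ supplies, via Taubes' wall-crossing analysis) and the chamber-refined adjunction inequality takes the displayed form. With this bookkeeping, the argument is uniform across all values of $b^{+}\geq 1$, and no reduction to a minimal model is necessary since the blow-up formula for Seiberg--Witten basic classes guarantees $K_\omega$ is basic on $X$ itself whenever the corresponding canonical class of any minimal model is.
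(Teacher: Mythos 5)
Your reduction to $b^+(X)=1$ is exactly the paper's first step: for $b^+\geq 2$ the surface $S$ (with $g\geq 1$) violates the adjunction inequality of \cite{KrMr94}, \cite{MoSzTa96}, and so the whole content of the proposition sits in the $b^+=1$ case. That is where your argument has a genuine gap as written. First, the non-vanishing of the Seiberg--Witten invariant of the canonical spin$^c$ structure in the Taubes chamber holds for \emph{every} symplectic $4$-manifold with $b^+=1$, including $\mathbb{CP}^2$ and ruled surfaces; it is not ``what the hypothesis $\kappa^s\geq 0$ supplies.'' Since the conclusion is false without the non-uniruled hypothesis (a line in $\mathbb{CP}^2$ is a sphere of square $1$), that hypothesis must do work somewhere else -- namely in matching the chamber where Taubes gives non-vanishing with the chamber in which the neck-stretching vanishing argument applies to $S$. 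That chamber-matching is precisely the substance of Li--Liu's theorem \cite{LiLiu}, not routine bookkeeping; to close the argument you must quote it in the form ``for a symplectic $4$-manifold with $b^+=1$ that is not rational or ruled, the canonical class satisfies $2g-2\geq [C]^2+|K_\omega\cdot[C]|$ for embedded surfaces of positive genus, non-negative square and non-torsion class,'' and observe that you may orient $S$ so that $[S]\cdot[\omega]>0$ (possible because $[S]^2>0$ when $g\geq 1$), which is what makes the two chambers comparable. Second, the $g=0$ line of your display is not an instance of the generalized adjunction inequality, which is a statement for $g\geq 1$; the sphere case needs the separate result that an essential embedded sphere of non-negative square in a symplectic $4$-manifold forces it to be rational or ruled (Corollary 2 of \cite{Li99}, which is exactly what the paper cites).

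With those repairs your proof closes and is a legitimate alternative. For comparison, the paper also reduces to $b^+=1$ but then avoids any $b^+=1$ adjunction inequality: it uses \cite{LiLi02} to represent $n[S]$ by a connected embedded symplectic surface $C$ for some $n>0$, compares the adjunction genus of $C$ with the genus of the smooth representative of $n[S]$ obtained by resolving $n$ parallel perturbed copies of $S$, and gets a contradiction from Liu's theorem \cite{Liu96} (non-uniruledness forces $c_1(X)\cdot[C]\leq 0$, via inflation) together with the genus-minimizing property of symplectic surfaces. Both routes are Seiberg--Witten--theoretic at bottom; yours is shorter if the $b^+=1$ generalized adjunction inequality is taken as a black box, while the paper's substitutes two other black boxes that are easier to quote precisely and keeps the chamber analysis entirely hidden inside the cited results.
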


\begin{proof}
If $S$ is a sphere with $[S]^2\geq 0$ and $[S]$ represents a non-trivial class in $H_2(X,\mathbb{Q})$,
then $X$ is uniruled, by Corollary 2 of \cite{Li99}.

Now suppose $g>0$ and $[S]^2 \geq 2g-1>0$. Then $S$ violates the adjunction inequality for $b_2^+\geq 2$ symplectic 4-manifolds in \cite{KrMr94} and \cite{MoSzTa96}.
Therefore we must have $b_2^+(X)=1$.
By the last paragraph in the proof of Theorem A in \cite{LiLi02}, there is a positive integer $n$ such that $n[S]$ is represented by an embedded connected $\omega'$-symplectic surface $C$ for some symplectic form $\omega'$. The genus $g_C$ of $C$ is given by
$$2g_C-2=(n[S])^2-c_1(X)([C])=n^2s-c_1(X) \cdot [C],$$
where $s=[S]^2$.
Notice  that $n[S]$ has another smooth representative $T$ given by taking $n-1$ perturbed copies of $S$ with pairwise positive distinct intersections and smoothing out the intersection points.
The genus of $T$ is given by
$$g_T=ng+\frac{n(n-1)s}{2}-(n-1).$$
Therefore, $$2g_T-2=2ng+n(n-1)s-2n=n^2s+n(2g-2-s)< n^2s.$$

Since $C$ is a symplectic surface we must have $g_C\leq g_T$. However,
if  $X$ is not uniruled, we have $c_1(X) \cdot [C] \leq 0 $ by \cite{Liu96} (otherwise, we can inflate the symplectic form in the direction of $PD([C])$ and get a contradiction, cf. Lemma \ref{first source} below).
Therefore, $g_T < g_C$.
This is a contradiction and
therefore $X$ must be uniruled.

\end{proof}

\begin{prop}\label{prop:uniruled}
 Let $(P,\omega_P)$ be a uniruled/adjunction cap of $(Y,\xi)$.
 If $(X,\omega)$ is a closed symplectic manifold obtained by gluing a strong symplectic filling $(N,\omega_N)$ of $(Y,\xi)$ with $(P,\omega_P)$ along $(Y, \xi)$,
 %as in Lemma \ref{l:splittingCohomologyPairing},
 then $(X,\omega)$ is uniruled.
\end{prop}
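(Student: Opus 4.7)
\medskip

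\noindent\textbf{Proof plan.} The plan is to treat the uniruled and adjunction cases separately, reducing each to one of the two criteria for uniruledness of a closed symplectic $4$-manifold (Theorem~\ref{uniruled closed} and Proposition~\ref{prop:adjunction}).

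For the uniruled cap case, I would first apply Lemma~\ref{l:splittingCohomologyPairing}: choose Liouville one-forms $\alpha_N$ and $\alpha_P$ on the two sides of $Y$, and glue $(N,\omega_N)$ to a rescaled cap $(P,t\omega_P)$ via a symplectization collar. For each sufficiently large $t>0$ this produces a closed symplectic $(X,\omega_t)$ with
\[
c_1(X)\cdot[\omega_t] \;=\; c_1(N)\cdot[(\omega_N,\alpha_N)] \;+\; t\,c_1(P)\cdot[(\omega_P,\alpha_P)].
\]
Because $(P,\omega_P)$ is uniruled, the second term is a strictly positive linear function of $t$, so for $t$ large the total pairing is positive. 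Theorem~\ref{uniruled closed} then gives that $(X,\omega_t)$ is uniruled. Since the symplectic deformation class is what is glued, this suffices to conclude that the glued $(X,\omega)$ in the statement is uniruled (one can simply take the glued form produced by Lemma~\ref{l:splittingCohomologyPairing} with $t$ large as our $\omega$).

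For the adjunction cap case, I would push the embedded genus-$g$ surface $S\subset P$ slightly off $\partial P$ so it is disjoint from the collar used in the gluing. Then $S$ sits as a smoothly embedded surface in $X$ with the same genus $g$ and the same self-intersection $[S]^2\ge\max\{2g-1,0\}$. If $[S]^2>0$, Proposition~\ref{prop:adjunction} applies directly and $(X,\omega)$ is uniruled. If $[S]^2=0$ (forcing $g=0$), I still need to verify that $[S]$ is nontrivial in $H_2(X,\mathbb{Q})$. For this, consider the Mayer--Vietoris sequence
\[
H_2(Y,\mathbb{Q})\xrightarrow{(i_*,-j_*)} H_2(N,\mathbb{Q})\oplus H_2(P,\mathbb{Q})\longrightarrow H_2(X,\mathbb{Q}).
\]
The element $(0,[S])$ maps to $[S]\in H_2(X,\mathbb{Q})$, and it lies in the kernel only if there is $y\in H_2(Y,\mathbb{Q})$ with $i_*(y)=0$ and $-j_*(y)=[S]$; in particular $[S]$ would lie in the image of $H_2(Y,\mathbb{Q})\to H_2(P,\mathbb{Q})$. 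The adjunction cap hypothesis precisely excludes this, so $[S]\ne0$ in $H_2(X,\mathbb{Q})$ and Proposition~\ref{prop:adjunction} again applies.

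The main subtlety is the borderline subcase $[S]^2=0$ of the adjunction situation: everything hinges on translating the hypothesis ``$[S]$ is not in the image of $H_2(Y,\mathbb{Q})$'' into the statement that $[S]$ survives to a nontrivial class in the closed manifold, which is exactly what Mayer--Vietoris delivers. The other routine verification is that in the gluing produced by Lemma~\ref{l:splittingCohomologyPairing} the surface $S$ can indeed be taken to lie in the interior of the cap region, so that its genus and self-intersection are unchanged in $X$; this follows since $S$ is compact and its neighborhood can be pushed inside $P$ away from $Y$ before gluing.
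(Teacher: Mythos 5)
Your proposal is correct and follows essentially the same route as the paper: the uniruled case via Lemma~\ref{l:splittingCohomologyPairing} together with Theorem~\ref{uniruled closed} for large $t$, and the adjunction case by transporting $S$ into $X$ and invoking Proposition~\ref{prop:adjunction}, with the Mayer--Vietoris sequence handling the borderline $[S]^2=0$ subcase exactly as the paper does. The extra detail you supply on the Mayer--Vietoris step is a correct elaboration of what the paper leaves implicit.
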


\begin{proof}
 First assume $(P,\omega_P)$ is a uniruled cap.
 There is a Liouville contact form $\alpha_P$ making $c_1(P) \cdot [(\omega_P,\alpha_P)] >0$.
 For the strong symplectic filling $(N,\omega_N)$ of $(Y,\xi)$, we have a Liouville contact form $\alpha_N$ on $\partial N$ making
 $(\partial N, \ker (\alpha_N))$ contactomorphic to $(Y,\xi)$.
 By Lemma \ref{l:splittingCohomologyPairing}, when $t$ is taken to be sufficiently large, $c_1(X)\cdot[\omega_X] >0$
 and hence $X$ is uniruled, by Theorem \ref{uniruled closed}.

 Now, we assume $(P,\omega_P)$ is an adjunction cap.
Let $S$ be the smoothly embedded surface in $P$ such that $s=[S]^2 \ge \max \{2g-1,0 \}$.
If $S$ is a sphere and $[S]^2=0$, we have that $[S]$ represents a non-trivial class in $H_2(X,\mathbb{Q})$ by the Mayer-Vietoris sequence and the assumption that
$[S]\in H_2(P,\mathbb{Q})$ does not lie in the image of $H_2(Y,\mathbb{Q})$ under the natural map.
Hence, $X$ is uniruled, by Proposition \ref{prop:adjunction}.
\end{proof}

\subsection{Strong Betti finiteness and basic properties}\label{section: properties and genus invariants}

We are ready to prove Theorem \ref{t:uniruled} as well as its adjunction cap version.

\begin{thm}\label{t:refinedUA}(cf. Theorem \ref{t:uniruled})
 Any uniruled/adjunction contact manifold $(Y,\xi)$ is of strong Betti finite type.
\end{thm}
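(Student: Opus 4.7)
The plan parallels the proof of Theorem \ref{t:CY}, with the uniruledness of the cap replacing its Calabi-Yau property. Given any minimal strong filling $(N,\omega_N)$ of $(Y,\xi)$, I glue $(N,\omega_N)$ to $(P,\omega_P)$ along $(Y,\xi)$ via Lemma \ref{l:splittingCohomologyPairing} to obtain a closed symplectic $4$-manifold $(X,\omega)$. By Proposition \ref{prop:uniruled} the manifold $(X,\omega)$ is uniruled; in particular $b_2^+(X)=1$, so $b_2^+(N)\leq 1$. It remains to produce uniform bounds on $b_1(X)$ and $b_2^-(X)$ depending only on $P$, and to transfer them to $N$ using the Mayer--Vietoris sequence of $X=P\cup_Y N$ together with the fact that $H_*(P)$ and $H_*(Y)$ are fixed.

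To bound $b_1(X)$, I produce a smoothly embedded oriented surface $S\subset P$ of genus $g(S)$ with $[S]^2>0$ in $X$: for a uniruled cap, $S$ is a smooth representative of the Poincar\'e dual of a suitable rational multiple of $[(\omega_P,\alpha_P)]$; for an adjunction cap, $S$ is the surface provided by the definition. Lemma \ref{l:genusUpperBound} then bounds the base genus of $X$ by $g(S)$, so $b_1(X)\leq 2g(S)$.

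The main step is to bound $b_2^-(X)$, equivalently to bound $c_1(X)^2$ from below via $c_1(X)^2 = 9 - 4b_1(X) - b_2^-(X)$. I may assume $X$ is non-minimal since the minimal case is handled directly by Theorem \ref{t:Uniruled}. Following the proof of Theorem \ref{t:CY}, I enlarge $S$ by iterated parallel-copy-and-smoothing until $[S]^2\geq g(S)-1$ and $-c_1(P)\cdot[S]+[S]^2\geq 0$. I then need (i) a symplectic representative $\widetilde S$ of a positive multiple of $[S]$ in $X$ whose genus is bounded explicitly in terms of $g(S)$ and $[S]^2$, and (ii) $[\widetilde S]\cdot e>0$ for every exceptional class $e$ of $X$. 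For a uniruled cap, (ii) is supplied by the neck-stretching Corollary \ref{c:neck} and (i) by the inflation results of \cite{LiLi02} combined with the symplectic genus bound of \cite{KrMr94}, \cite{MoSzTa96}. Once (i) and (ii) hold, Lemma \ref{l:weiyi} yields $(K_X+[\widetilde S])^2\geq 0$, which together with the adjunction identity $2g(\widetilde S)-2=K_X\cdot[\widetilde S]+[\widetilde S]^2$ produces the desired lower bound on $c_1(X)^2$.

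The main obstacle is the adjunction cap case: $S$ is a priori only smooth, so Corollary \ref{c:neck} does not directly apply and there is no Liouville pairing to invoke. I plan to circumvent this by reproducing the argument of Proposition \ref{prop:adjunction}: some positive multiple $n[S]$ is represented by an embedded symplectic surface $C$ in $(X,\omega')$ for a possibly different symplectic form $\omega'$ on $X$, with $g(C)\leq ng(S)+\tfrac{n(n-1)[S]^2}{2}-(n-1)$. Because uniruledness, the canonical class $K_X$, and the set of exceptional classes of $X$ are all smooth invariants, Lemma \ref{l:weiyi} applies to $(X,\omega')$ with $D=C$ once (ii) is arranged. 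Strict positivity against exceptional classes is then forced by successively blowing down those exceptional classes orthogonal to $[C]$, which by positivity of intersections in $(X,\omega')$ can be disjoined from $C$, while carefully tracking how the resulting bound on $c_1^2$ propagates back through the blow-downs. Once $b_2(X)$ is bounded in this way, Mayer--Vietoris bounds $b_2(N)$; the bound on $b_3(N)=b_1(N,Y)$ then follows from Poincar\'e--Lefschetz duality and another Mayer--Vietoris computation.
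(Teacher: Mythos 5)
Your treatment of the uniruled-cap case is correct and coincides with the paper's proof: glue, invoke Proposition \ref{prop:uniruled}, bound the base genus via Lemma \ref{l:genusUpperBound}, and bound $c_1(X)^2$ from below using the surface $S$ Lefschetz-dual to $c[(\omega_P,\alpha_P)]$, Corollary \ref{c:neck}, Lemma \ref{l:weiyi} and the adjunction formula. (Minor point: $b_2^+(N)=0$, not merely $\le 1$, since $[(\omega_P,\alpha_P)]^2>0$ already forces $b_2^+(P)\ge 1$.)

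The genuine problem is your handling of the adjunction-cap case, where you have manufactured an obstacle that is not there and then proposed a workaround that does not close. An adjunction cap is by definition a \emph{strong} concave filling, so it carries a Liouville one form $\alpha_P$ near the boundary, the class $[(\omega_P,\alpha_P)]$ satisfies $[(\omega_P,\alpha_P)]^2>0$ by Lemma \ref{l:relCohProperties}, and Corollary \ref{c:neck} applies to \emph{any} symplectic cap. The paper therefore runs the identical argument in both cases: the adjunction surface is used only once, inside Proposition \ref{prop:uniruled}, to certify that $X$ is uniruled; every subsequent step uses the Liouville-dual surface $S$, for which Corollary \ref{c:neck} gives $[S]\cdot e>0$ for all exceptional classes directly. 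Your alternative route has two concrete defects. First, when $[S]^2=0$ (permitted for adjunction caps with $g=0$), the adjunction surface cannot feed into Lemma \ref{l:genusUpperBound}, so your bound on $b_1(X)$ fails in that sub-case. Second, and more seriously, your scheme of ``blowing down the exceptional classes orthogonal to $[C]$ and tracking the bound on $c_1^2$ back through the blow-downs'' is circular: each blow-down shifts $c_1^2$ by $+1$, so a lower bound on $c_1^2$ of the blown-down manifold yields $c_1(X)^2\ge (\text{bound})-m$ where $m$ is the number of blow-downs performed; but $m$ is bounded only by $b_2^-(X)$, which is precisely the quantity you are trying to control, and nothing a priori prevents the set of exceptional classes orthogonal to $[C]$ from being as large as $b_2^-(X)$ allows. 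You need the strict positivity $[S]\cdot e>0$ on $X$ itself, with no blow-downs, and that is exactly what the neck-stretching Corollary \ref{c:neck} supplies once you use the Liouville-dual surface instead of the adjunction surface.
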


\begin{proof}

Let the glued symplectic manifold between a uniruled/adjunction cap $P$ of $Y$ and a strong symplectic filling $N$ of $Y$ be $(X,\omega)$ as before.
By Proposition \ref{prop:uniruled}, $(X,\omega)$ is uniruled.

Following the proof of Theorem \ref{t:CY}, we have $b_2^+(P) = 1$ and $b_2^+(N)=0$.
Moreover, there is a bound depending only on $P$ for base genus of $X$.

The next step is to give a bound depending only on $P$ for the number of exceptional curves in $X$.
Similar to the proof of Theorem \ref{t:CY}, we perturb $[(\omega_P,\alpha_P)]$ a little bit such that there is a positive number $c$ and
an embedded surface $S$ representing $cPD([(\omega_P,\alpha_P)])$ and $[S]^2 \ge g(S)-1$.
This time, an exceptional class in $X$ does not have to pair positively with $[S]$ a priori because $[S]$ is not Poincar\'e dual to multiple of $\omega_X$ in general.
However, by Corollary \ref{c:neck}, we know that any exceptional class indeed pairs positively with $[S]$.
After this point, the rest of the proof is the same as the proof of Theorem \ref{t:CY}.

\begin{comment}

Since $X$ is ruled, $(e+\sigma)(X)$ only depends on the base genus of $X$, and hence $(e+\sigma)(X) \in [-4g_{\max}(P)+4, -4g_{\min}(P)+4]$.
By Novikov additivity, $(e+\sigma)(N)=(e+\sigma)(X)-(e+\sigma)(P)$ is bounded.

Here note that $b_2^{0}(Z) \leq b_1(\partial Z)$ for any compact connected oriented 4-manifold $Z$ with boundary,
where $b_2^0(Z)$ denotes the maximal dimension of subspaces of $H_2(Z;\mathbb{R})$ whose intersection from is represented by the zero matrix
(This can be seen from the long exact sequence for the pair $(Z, \partial Z)$ as follows. By the argument of Solution of Exercise 5.3.13(f) in \cite{GS99},
it follows that $b_2^0(Z)$ is the rank of the kernel of the homomorphism $H_2(Z;\mathbb{R})\to H_2(Z,\partial Z;\mathbb{R})$.
The exact sequence thus gives $b_2^0(Z)\leq b_2(\partial Z)=b_1(\partial Z)$.).

Therefore, we have $(b_1+b_3)(N) = -(e+\sigma)(N)+1+2b_2^+(N)+b_2^{0}(N)$ which is bounded because we know $(e+\sigma)(N)$ is bounded, $b_2^+(N)=0$ and $b_2^0(N)$ is bounded by $b_1(Y)$.
\end{comment}
\end{proof}

%We have the following sufficient criterion for a contact manifold to have a uniruled/adjunction cap.
Uniruled/adjunction caps behave well with respect to strong cobordisms.

\begin{lemma}\label{stable under strong cobordism}
Let $(W,\omega_W)$ be a strong cobordism with negative end $(\partial_- W,\ker(\alpha_{W}^-))$ and positive end $(\partial_+ W,\ker(\alpha_W^+))$.
If $(\partial_+ W,\ker(\alpha_W^+))$ is uniruled/adjunction, then so is $(\partial_- W,\ker(\alpha_{W}^-))$.
\end{lemma}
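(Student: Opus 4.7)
The plan is to take any uniruled (respectively, adjunction) cap $(P,\omega_P,\alpha_P)$ of the positive end $(\partial_+ W,\ker \alpha_W^+)$, glue $P$ on top of $W$ along $\partial_+ W \cong \partial P$, and show that the resulting compact symplectic manifold is again a uniruled (respectively, adjunction) cap, now of the negative end $(\partial_- W, \ker \alpha_W^-)$. For the gluing itself, I would mimic the construction in the proof of Lemma~\ref{l:splittingCohomologyPairing}: identify the two contact boundaries by a contactomorphism, insert a collar of the symplectization of $(\partial_+ W, \alpha_W^+)$, and rescale $(\omega_P, \alpha_P)$ by a large constant $t > 0$ so that the Liouville $1$-forms match across the collar. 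The resulting manifold $Q := W \cup_t P$ then has a single concave boundary $\partial_- W$ with Liouville $1$-form $\alpha_W^-$.

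For the uniruled case, the key step is to establish a version of Lemma~\ref{l:splittingCohomologyPairing} in which the ``filling'' piece is replaced by the cobordism $W$. Writing the symplectic form on $Q$ modulo an exact form as $\omega_W - d\alpha_W^{+,c}$ on $W$ and $t(\omega_P - d\alpha_P^c)$ on $P$, and tracking the relative cohomology bookkeeping exactly as in that lemma, I expect to obtain an identity of the form
\[
c_1(Q)\cdot [(\omega_Q,\alpha_W^-)] = c_1(W)\cdot [(\omega_W,\alpha_W^\partial)] + t\cdot c_1(P)\cdot [(\omega_P,\alpha_P)],
\]
where $\alpha_W^\partial$ denotes the $1$-form on $\partial W$ equal to $\alpha_W^-$ on $\partial_- W$ and $\alpha_W^+$ on $\partial_+ W$. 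The first summand is a constant depending only on $(W,\omega_W)$, and the coefficient $c_1(P)\cdot [(\omega_P,\alpha_P)]$ is strictly positive by the uniruled hypothesis on $P$, so taking $t$ sufficiently large forces the total to be positive. This shows $Q$ is a uniruled cap of $(\partial_- W,\ker\alpha_W^-)$.

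For the adjunction case, the embedded genus-$g$ surface $S \subset P$ with $[S]^2 \ge \max\{2g-1,0\}$ sits unchanged inside $Q$ with its genus and self-intersection number preserved, so the numerical inequality transfers for free. The only subtle case is $[S]^2 = 0$ (which forces $g=0$), in which one must verify that $[S] \in H_2(Q;\mathbb{Q})$ does not lie in the image of $H_2(\partial_- W; \mathbb{Q})$. I would handle this via the Mayer--Vietoris sequence for $Q = W \cup_{\partial_+ W} P$: if $[S]$ in $H_2(Q;\mathbb{Q})$ agreed with the image of some class from $H_2(\partial_- W;\mathbb{Q})$ pushed through $W$, then the kernel of the map $H_2(W;\mathbb{Q}) \oplus H_2(P;\mathbb{Q}) \to H_2(Q;\mathbb{Q})$ would force some $[U] \in H_2(\partial_+ W;\mathbb{Q})$ to map to $[S]$ in $H_2(P;\mathbb{Q})$, contradicting the adjunction hypothesis on the pair $(P,S)$.

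The main technical obstacle I anticipate is the relative cohomology bookkeeping in the uniruled case: the cobordism $W$ has both concave and convex boundary, so it contributes a genuinely new relative pairing term (absent in the filling-cap gluing of Lemma~\ref{l:splittingCohomologyPairing}). Extending primitives with appropriate cut-offs near both boundary components so that the two pieces glue to a valid relative representative on $Q$ requires some care, but is expected to go through in essentially the same way as in the proof of that lemma, since the new term is a fixed constant independent of the scaling parameter $t$ and therefore harmless.
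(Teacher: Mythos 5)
Your proposal is correct and follows essentially the same route as the paper: glue the cap $(P,t\omega_P)$ onto the positive end of $W$ by inserting a piece of the symplectization, use that $c_1(Q)\cdot[(\omega_Q,\alpha_W^-)]$ splits as a fixed contribution from $W$ plus $t\,c_1(P)\cdot[(\omega_P,\alpha_P)]$ to get positivity for large $t$ in the uniruled case, and transfer the surface $S$ with a homological non-triviality check in the $[S]^2=0$ adjunction case. The only (harmless) cosmetic difference is that for the latter check the paper factors the map $H_2(P;\mathbb{Q})\to H_2(P,\partial_+W;\mathbb{Q})$ through the glued cap, whereas you argue by contradiction directly from the kernel of the Mayer--Vietoris map $H_2(W;\mathbb{Q})\oplus H_2(P;\mathbb{Q})\to H_2(Q;\mathbb{Q})$; both are valid.
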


 \begin{proof}
 We first assume $(\partial_+ W,\ker(\alpha_W^+))$ is uniruled.
Let $(P, \omega_P)$ be a uniruled cap for $(\partial_+ W,\ker(\alpha_W^+))$.
By an analogue of Lemma \ref{l:splittingCohomologyPairing},
$(W,\omega_W)$ and $(P,t\omega_P)$ can be glued symplectically by inserting part of the symplectization of $(\partial_+ W,\ker(\alpha_W))$ for  some  large $t$.
This glued symplectic manifold is a uniruled cap when $t$ is sufficiently large.

When $(\partial_+ W,\ker(\alpha_W^+))$ is adjunction, we take an adjunction cap $(P_+,\omega_{P_+})$ together with the surface $S$ inside.
The statement follows easily if $[S]^2 > 0$.
When $[S]^2 =0$, we define $P_-=W \cup_{\partial_+ W} P_+$ and we
want to show that $[S] \in H_2(P_-,\mathbb{Q})$ does not lie in the image of $H_2(\partial_- W,\mathbb{Q})$.
By the Mayer-Vietoris sequence and the definition of being an adjunction cap, the natural map
$$f:H_2(P_+,\mathbb{Q}) \to  H_2(P_+,\partial_+ W,\mathbb{Q})$$
satisfies $f([S]) \neq 0$.
Notice that, $f$ factors through
$$H_2(P_+,\mathbb{Q}) \xrightarrow{g} H_2(P_-,\mathbb{Q}) \xrightarrow{h} H_2(P_-,\partial_- W,\mathbb{Q}) \xrightarrow H_2(P_-,W,\mathbb{Q}) = H_2(P_+,\partial_+ W,\mathbb{Q})$$
In particular, $f([S]) \neq 0$ implies that the image of $g([S])$ under $h$ is non-trivial.
By the Mayer-Vietoris sequence again, $g([S])$ does not lie in the image of the natural map from $H_2(\partial_- W,\mathbb{Q})$ to $H_2(P_-,\mathbb{Q})$ and hence $P_-$ is an adjunction cap.
\end{proof}

The following lemma provides a common source of uniruled and adjunction caps.

\begin{lemma}\label{first source}
 Suppose $(P,\omega_P)$ is a cap for $(Y,\xi)$ with a closed embedded symplectic surface $S$ not  intersecting $\partial P$, $[S]^2 \ge 0$ and $c_1(P)[S] > 0$.
Then, after a symplectic deformation, $(P,\omega_P')$ is a uniruled and an adjunction cap for $(Y,\xi)$.
\end{lemma}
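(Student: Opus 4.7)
The plan is to combine symplectic inflation along $S$, which boosts the pairing of $c_1(P)$ with the relative symplectic class, with a purely smooth multi-copy and resolution construction, which produces a surface satisfying the adjunction-cap self-intersection inequality. Both constructions are performed simultaneously with the same deformed symplectic form.

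For the uniruled cap, I would invoke McDuff's inflation theorem: because $S$ is an embedded symplectic surface with $[S]^2 \ge 0$, there is a one-parameter family of symplectic forms $\omega_t = \omega_P + t\eta_S$, $t \ge 0$, where $\eta_S$ is a closed $2$-form supported in an arbitrarily small tubular neighborhood of $S$ and Poincar\'e dual to $[S]$. Taking this neighborhood disjoint from $\partial P$ (possible since $S$ does not meet $\partial P$), the form $\omega_t$ agrees with $\omega_P$ near $\partial P$, so $\alpha_P$ remains a Liouville one-form and $(P, \omega_t)$ is still a cap of $(Y, \xi)$. In relative cohomology,
\[
[(\omega_t, \alpha_P)] \;=\; [(\omega_P, \alpha_P)] + t\, PD([S]) \;\in\; H^2(P, \partial P; \mathbb{R}).
\]
Pairing with $c_1(P)$ and using $c_1(P)\cdot[S]>0$, the right-hand side is strictly positive for $t$ sufficiently large. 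Choosing such a $t_0$ and setting $\omega_P' := \omega_{t_0}$ would yield the uniruled cap property.

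For the adjunction cap property of the same $(P, \omega_P')$, I would switch to the smooth category. When $[S]^2 > 0$, take $k$ perturbed parallel copies of $S$ arranged to intersect each other transversely with all intersections positive, and resolve every intersection in the standard way. The result is a smoothly embedded connected surface $S_k$ with $[S_k] = k[S]$, genus $G_k = k\,g(S) + \binom{k}{2}[S]^2 - (k-1)$, and self-intersection $k^2[S]^2$. The desired inequality $[S_k]^2 \ge 2 G_k - 1$ reduces by elementary arithmetic to $[S]^2 + 2 - 1/k \ge 2g(S)$, which holds for every $k \ge 2$ because the symplectic adjunction formula for $S$ together with $c_1(P)\cdot[S] > 0$ already gives $[S]^2 \ge 2g(S) - 1$. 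When $[S]^2 = 0$, the same adjunction formula forces $g(S) = 0$ and $c_1(P)\cdot[S] = 2$, so the numerical condition $[S]^2 \ge 0$ is satisfied by $S$ itself; the remaining requirement, that $[S] \in H_2(P;\mathbb{Q})$ not lie in the image of $H_2(Y;\mathbb{Q})$, must then be verified as a separate case check using the positivity of $c_1(P)\cdot[S]$.

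The only genuinely technical input is McDuff's inflation theorem with support localized away from $\partial P$; granted this, the cohomological computation is formal and the smooth construction of $S_k$ is a standard positive smoothing of intersections. The main obstacle I would expect to spend real effort on is the non-triviality clause in the $[S]^2 = 0$ case of the adjunction-cap definition, where $S$ itself must serve as the witnessing surface and one has to exclude the possibility that $[S]$ comes from a class on the contact boundary.
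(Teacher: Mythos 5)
Your uniruled half coincides with the paper's argument: inflation along $S$, supported away from $\partial P$ so that $\alpha_P$ survives, followed by the computation $c_1(P)\cdot[(\omega_t,\alpha_P)]=c_1(P)\cdot[(\omega_P,\alpha_P)]+t\,c_1(P)\cdot[S]>0$ for $t$ large. The adjunction half, however, has a genuine gap exactly where you flag it: in the $[S]^2=0$ case you leave the non-triviality clause unverified, and the tool you propose for it --- ``the positivity of $c_1(P)\cdot[S]$'' --- does not work. The restriction of $c_1(P)$ to $\partial P$ is $c_1(\xi)$, which need not vanish, so a class in the image of $H_2(Y;\mathbb{Q})\to H_2(P;\mathbb{Q})$ can perfectly well pair non-trivially with $c_1(P)$; positivity of $c_1(P)\cdot[S]$ therefore cannot exclude $[S]$ from that image. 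The correct (and very short) argument uses the symplectic area instead: $S$ is a closed symplectic surface disjoint from $\partial P$, so $\int_S\omega_P>0$, i.e.\ $[(\omega_P,\alpha_P)]$ pairs non-trivially with $[S]$ (second bullet of Lemma~\ref{l:relCohProperties}, with empty boundary term); on the other hand $\omega_P|_{\partial P}=d\alpha_P$ is exact, so every class coming from $H_2(\partial P;\mathbb{Q})$ pairs to zero with $[\omega_P]$. Hence $[S]$ does not lie in the image of $H_2(Y;\mathbb{Q})$. Equivalently, the image of $[S]$ in $H_2(P,\partial P;\mathbb{Q})$ is non-zero, and exactness of $H_2(\partial P)\to H_2(P)\to H_2(P,\partial P)$ gives the claim; this is how the paper phrases it.

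A secondary remark: your multi-copy-and-resolve construction for the case $[S]^2>0$ is correct but unnecessary. Since $S$ is embedded and symplectic, the adjunction formula already gives $[S]^2=2g(S)-2+c_1(P)\cdot[S]\ge 2g(S)-1$, so $S$ itself satisfies the adjunction-cap inequality $[S]^2\ge\max\{2g(S)-1,0\}$ in all cases; the paper simply uses $S$ as the witnessing surface throughout, which also avoids having to separate the two cases.
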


\begin{proof}
Since $S$ is smoothly embedded symplectic surface with non-negative self-intersection, we can do inflation (See \cite{LaMc96}, \cite{LiUs06}) along $S$ to deform the symplectic form.
This gives a family of symplectic form $\omega_t$ on $P$ such that $[\omega_t]=[\omega_P]+t\iota_*(PD[S])$, where $\iota_* : H^2(P,\partial P;\mathbb{R}) \to H^2(P;\mathbb{R})$
is the natural map and $PD$ denotes the Lefschetz dual.
Let $\alpha_P$ be a choice of Liouville contact form on $\partial P$ with respect to $\omega_P$.
Then, $\alpha_P$ is also a Liouville contact form on $\partial P$ with respect to $\omega_t$ since inflation is local.
Then, $c_1(P)\cdot[(\omega_t,\alpha_P)]=c_1(P)\cdot ([(\omega_P,\alpha_P)]+tPD[S]) >0$ for sufficiently large $t$.
Hence, we can find $\omega_P'$ such that $(P,\omega_P')$ is a uniruled cap for $(Y,\xi)$.

On the other hand, since $S$ is embedded and symplectic, we have $[S]^2+2-2g(S)=c_1(P)[S] \ge 1$ and $[(\omega_P,\alpha_P)] \cdot [S] > 0$.
Here $g(S)$ is the genus of $S$.
Hence $[S]^2 \ge \max \{2g-1,0\}$ and $[S] \in H_2(P,\partial P)$ is non-trivial.
Thus $[S] \in H_2(P)$ does not lie in the image of $H_2(\partial P)$ under the natural morphism, which implies that $(P,\omega_P)$ is an adjunction cap.

\end{proof}

Theorem~\ref{t:refinedUA} together with an argument of \cite{Et04} gives the following byproduct. The reader should compare this with Corollary 1.4 in Albers-Bramham-Wendl \cite{AlBrWe10}.

\begin{corr}\label{cor: semifilling}If a contact 3-manifold is uniruled/adjunction, then any strong semifilling of the contact manifold has connected boundary.
\end{corr}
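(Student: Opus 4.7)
The plan is to argue by contradiction, exactly mirroring the construction used in the proof of Corollary \ref{c:infiniteFamily} (which is itself a variant of Etnyre's argument from \cite{Et04}). Suppose $(Y,\xi)$ is uniruled/adjunction and, for contradiction, that it admits a strong semifilling $(W,\omega)$ with $\partial W = Y \sqcup Y'$ where $Y'$ is a nonempty union of contact 3-manifolds. The idea is to use the extra boundary components to manufacture a sequence of minimal strong fillings of $(Y,\xi)$ with unbounded Betti numbers, in direct conflict with Theorem \ref{t:refinedUA}.

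First, I would invoke the result of Etnyre-Honda \cite{EtHo02} (the same input used in the proof of Corollary \ref{c:infiniteFamily}) which provides, for any contact 3-manifold, symplectic caps with arbitrarily large $b_2^+$. Choose such a cap for each component of $Y'$ and glue them to $(W,\omega)$ along $Y'$. The result is a sequence of strong (not necessarily minimal) fillings of $(Y,\xi)$ whose $b_2^+$ grows without bound.

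Next, I would pass to minimal models by symplectically blowing down all exceptional spheres. Since blow-down only decreases $b_2^-$ and leaves $b_2^+$ unchanged, the resulting minimal strong fillings of $(Y,\xi)$ still have arbitrarily large $b_2^+$, hence unbounded tuples $(b_1,b_2,b_3)$. This directly contradicts Theorem \ref{t:refinedUA}, which asserts that every uniruled/adjunction contact 3-manifold is of strong Betti finite type. The contradiction forces $Y'$ to be empty, so every strong semifilling has connected boundary.

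There is essentially no technical obstacle beyond the two ingredients already in hand, namely the existence of caps of arbitrarily large $b_2^+$ (standard, due to Etnyre-Honda) and the Betti-finiteness Theorem \ref{t:refinedUA}. The one point worth being careful about is that blow-down must preserve the strong convex boundary condition on the $Y$-side, which it does since blow-down takes place in the interior and does not affect the boundary contact data; so the minimized filling is genuinely a minimal strong filling of $(Y,\xi)$, which is the class of fillings controlled by Theorem \ref{t:refinedUA}.
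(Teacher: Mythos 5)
Your proposal is correct and follows essentially the same route as the paper: cap off the extra boundary components with the Etnyre--Honda caps of arbitrarily large $b_2^+$ and contradict the strong Betti finiteness of Theorem \ref{t:refinedUA}. Your additional step of blowing down to obtain genuinely \emph{minimal} strong fillings is a welcome bit of care (since strong Betti finite type is defined via minimal strong fillings) that the paper's own one-line proof leaves implicit, mirroring what is done explicitly in the proof of Corollary \ref{c:infiniteFamily}.
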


\begin{proof}
Suppose, to the contrary, that $(Y,\xi)$ admits a semifilling $W$ with disconnected boundary.
Since every contact 3-manifold has a cap with arbitrarily large $b^+_2$ (see \cite{EtHo02}), by capping off the boundary components of $W$ apart from $(Y,\xi)$,
we can construct a strong filling $N$ of $(Y,\xi)$ with $b_2^+(N)$ as large as we want. This contradicts to Theorem~\ref{t:refinedUA}.
\end{proof}

\subsection{Genus invariants and complexity}\label{ss:genusBound}

We define  genus invariants for uniruled/adjunction caps, and the related complexity of  contact 3-manifolds, which will provide explicit bounds for
strong and Stein fillings.

\begin{defn}\label{adjunction: genus definition}
For a uniruled/adjunction cap $P$, its {\bf maximal} (resp. {\bf minimal}) {\bf base genus} $g_{\max}(P)$ (resp. $g_{\min}(P)$)  is the maximal (resp. minimal) base genus (i.e.\ $\frac{b_1(X)}{2}$)
of the closed uniruled manifolds $X$ obtained by gluing  strong fillings to the boundary of $P$.
The {\bf base genus difference} $g_{\Delta}(P)$ is defined to be $g_{\max}(P)-g_{\min}(P)$.
If there is no strong filling for the boundary of the cap, $g_{\max}(P)$, $g_{\min}(P)$ are defined to be $-\infty$ and $g_{\Delta}(P)$ is defined to be $0$.
The {\bf surface genus} $g_s(P)$ of $P$ is the minimal genus $g$ required to find a continuous map $u: \Sigma_g \to P$ such that $u_*[\Sigma_g]^2 >0$.
Here, $\Sigma_g$ is the closed oriented surface of genus $g$.
\end{defn}

Immediately, we have the following lemma.

\begin{lemma}\label{cor: preliminary inequality}
 For any uniruled/adjunction cap  $(P,\omega_P)$, $g_s(P)$ is finite and
 we have $g_s(P) \ge g_{\max}(P) \ge g_{\min}(P)$.
\end{lemma}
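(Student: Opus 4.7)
The statement decomposes into three parts: finiteness of $g_s(P)$, the inequality $g_s(P) \ge g_{\max}(P)$, and the inequality $g_{\max}(P) \ge g_{\min}(P)$. The last inequality is immediate from the definitions; when no strong filling exists it reduces to $-\infty \ge -\infty$, and otherwise it is the trivial fact that a maximum over a non-empty set dominates the minimum.

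For the finiteness of $g_s(P)$, I would argue uniformly for uniruled and adjunction caps using only the fact that $(P,\omega_P)$ is a cap. Lemma~\ref{l:relCohProperties} gives $[(\omega_P,\alpha_P)]^2>0$ for any Liouville one form $\alpha_P$. After a small perturbation so that $c\,[(\omega_P,\alpha_P)]$ is a rational class for some $c>0$, its Lefschetz dual can be represented by a smoothly embedded oriented surface $S\subset P$ with $[S]^2>0$ (cf.\ Remark~1.2.4 of \cite{GS99}, as already used in the proof of Theorem~\ref{t:CY}). The inclusion $S\hookrightarrow P$ then realizes a continuous map from $\Sigma_{g(S)}$ to $P$ with positive self-intersection squared, so $g_s(P)\le g(S)<\infty$.

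The substantive inequality is $g_s(P)\ge g_{\max}(P)$. If $\partial P$ admits no strong filling, $g_{\max}(P)=-\infty$ and the claim is vacuous. Otherwise, I would fix any strong filling $(N,\omega_N)$ of $(\partial P,\xi)$ and glue, using Lemma~\ref{l:splittingCohomologyPairing}, to form a closed symplectic manifold $(X,\omega_X)$; by Proposition~\ref{prop:uniruled}, $X$ is uniruled. Now let $u\colon \Sigma_g\to P$ be a continuous map realizing $g=g_s(P)$ with $(u_*[\Sigma_g])^2>0$, and compose with the inclusion to obtain $\widetilde u\colon \Sigma_g\to X$. The key algebraic observation is that the relative self-intersection pairing $H_2(P)\otimes H_2(P,\partial P)\to\mathbb{Z}$ agrees, for classes supported in $P$, with the intersection pairing in the closed manifold $X$; this follows from excision $H_2(X,\partial P)\cong H_2(P,\partial P)\oplus H_2(N,\partial N)$ and the factorization of the $H_2(P)\to H_2(X)\to H_2(X,\partial P)$ composition. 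Hence $(\widetilde u_*[\Sigma_g])^2>0$ in $X$, and Lemma~\ref{l:genusUpperBound} gives that the base genus of $X$ is at most $g$. Taking the supremum over all strong fillings $N$ yields $g_{\max}(P)\le g_s(P)$.

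No step here is genuinely hard; the only place that requires care is ensuring that self-intersection numbers of classes supported in $P$ are invariant under embedding into the closed manifold $X$, which is a routine Mayer--Vietoris/excision check. The strategy works identically for uniruled and adjunction caps because Proposition~\ref{prop:uniruled} produces a uniruled $X$ in both cases, and the finiteness of $g_s(P)$ needs only the cap property via Lemma~\ref{l:relCohProperties}, bypassing any separate handling of the degenerate $[S]^2=0$ situation in the adjunction definition.
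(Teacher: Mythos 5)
Your proposal is correct and follows essentially the same route as the paper: the paper's one-line proof simply points to the proof of Theorem~\ref{t:refinedUA}, which (via the proof of Theorem~\ref{t:CY}) produces exactly your embedded surface $S$ Lefschetz dual to $c[(\omega_P,\alpha_P)]$ with $[S]^2>0$ to get $g_s(P)<\infty$, and invokes Proposition~\ref{prop:uniruled} together with Lemma~\ref{l:genusUpperBound} applied to a map into the glued uniruled manifold $X$ to bound the base genus. Your explicit excision check that self-intersections computed in $P$ agree with those in $X$, and your application of Lemma~\ref{l:genusUpperBound} to a genus-minimizing map rather than to $S$ itself, merely spell out details the paper leaves implicit.
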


\begin{proof}
 The bounds  $ g_{\max}(P)\le g_s(P)<\infty$ of $g_s(P)$ is contained in the proof of Theorem \ref{t:refinedUA}.
\end{proof}

\begin{defn}
Given a strongly fillable uniruled/adjunction contact manifold $(Y,\xi)$, we define the {\bf complexity} $c_u(Y,\xi)$ of $(Y,\xi)$ to be the infimum of $g_{\max}(P)$
among all uniruled caps and all adjunction caps $(P,\omega_P)$ of $(Y,\xi)$.
If a uniruled/adjunction contact manifold $(Y,\xi)$ is not fillable, we define $c_u(Y,\xi)=0$.
\end{defn}

\begin{lemma}\label{l:gDelta}
 If $P$ and $P_2$ are two uniruled/adjunction caps of $(Y,\xi)$, then $g_{\Delta}(P)=g_{\Delta}(P_2)$.
Consequently, $c_u(Y,\xi) \ge g_{\Delta}(P)$.
\end{lemma}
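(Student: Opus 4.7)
The consequence $c_u(Y,\xi) \ge g_\Delta(P)$ follows formally from the main claim. Indeed, for any uniruled/adjunction cap $P'$ of $(Y,\xi)$, the base genus of a closed uniruled manifold is non-negative, so $g_{\min}(P') \ge 0$, and hence $g_{\max}(P') \ge g_{\max}(P') - g_{\min}(P') = g_\Delta(P') = g_\Delta(P)$. Taking the infimum over all uniruled/adjunction caps $P'$ then gives $c_u(Y,\xi) \ge g_\Delta(P)$.

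For the main claim $g_\Delta(P) = g_\Delta(P_2)$, the plan is to compare the two sets $\{b_1(N\cup_Y P) : N \text{ strong filling of }(Y,\xi)\}$ and $\{b_1(N\cup_Y P_2) : N\}$, and show they have the same diameter. By Proposition~\ref{prop:uniruled}, both $X_P := N\cup_Y P$ and $X_{P_2} := N\cup_Y P_2$ are closed uniruled, and their base genera equal $b_1(X_P)/2$ and $b_1(X_{P_2})/2$ respectively; so this comparison exactly computes $g_\Delta(P)$ and $g_\Delta(P_2)$. The strongest form, which I aim for, is that $b_1(X_P) - b_1(X_{P_2})$ is constant in $N$, which would immediately imply the two sets differ by a global shift and hence have equal diameters.

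Concretely, Mayer--Vietoris with $\mathbb{Q}$-coefficients for $X_P = N \cup_Y P$ yields
\begin{equation*}
b_1(X_P) \;=\; b_1(N) + b_1(P) - b_1(Y) + \dim_{\mathbb{Q}}(K_N \cap K_P),
\end{equation*}
where $K_N = \ker(H_1(Y;\mathbb{Q})\to H_1(N;\mathbb{Q}))$ and $K_P = \ker(H_1(Y;\mathbb{Q})\to H_1(P;\mathbb{Q}))$, with the analogous expression for $X_{P_2}$. Subtracting, all $N$-dependence in $b_1(X_P) - b_1(X_{P_2})$ collapses onto the term $\dim(K_N\cap K_P) - \dim(K_N\cap K_{P_2})$.

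The hard part will be showing this kernel-intersection difference is independent of $N$. My plan is to use the further Mayer--Vietoris identity $\ker(j_{Y*}^{P}\colon H_1(Y;\mathbb{Q})\to H_1(X_P;\mathbb{Q})) = K_N + K_P$, combined with the uniruled structure on $X_P$: since $H_1(X_P;\mathbb{Q}) \cong H_1(\Sigma_{h_P};\mathbb{Q})$ pulls back from the base of the ruling (blowing down exceptional spheres of $X_P$ does not alter $H_1$), the rank of $j_{Y*}^{P}$ is governed by the composition $Y \hookrightarrow X_P \to \Sigma_{h_P}$. Running the same analysis on $X_{P_2}$ and exploiting that the same $N$ feeds both constructions, the filling-dependent contributions to $\dim(K_N+K_P)$ and $\dim(K_N+K_{P_2})$ should cancel in the difference, leaving a quantity depending only on $P$ and $P_2$; equivalently one checks that the inclusion $H_1(N;\mathbb{Q})\hookrightarrow H_1(X_P;\mathbb{Q})$ controls $K_N$'s contribution identically in both computations.
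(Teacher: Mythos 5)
Your reduction of the consequence $c_u(Y,\xi)\ge g_{\Delta}(P)$ to the main claim is fine, and your Mayer--Vietoris identity $b_1(X_P)=b_1(N)+b_1(P)-b_1(Y)+\dim_{\mathbb{Q}}(K_N\cap K_P)$ is correct. The problem is that the entire main claim has been reduced to showing $\dim(K_N\cap K_P)-\dim(K_N\cap K_{P_2})$ is independent of $N$, and this is precisely the step you do not prove: you only assert that the filling-dependent contributions ``should cancel.'' There is no purely homological mechanism for such cancellation. For two fixed subspaces $K_P,K_{P_2}\subset H_1(Y;\mathbb{Q})$, the quantity $\dim(K\cap K_P)-\dim(K\cap K_{P_2})$ genuinely varies as $K$ varies (if $K_P$ and $K_{P_2}$ are distinct lines, taking $K=K_P$ gives $+1$ while $K=K_{P_2}$ gives $-1$), so whatever forces the cancellation must come from a global symplectic constraint on which $K_N$ can occur. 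The uniruled structure you invoke --- $H_1(X_P;\mathbb{Q})\cong H_1(\Sigma_{h_P};\mathbb{Q})$ pulled back from the base --- does not supply such a constraint in any usable form, because $h_P$ is exactly the unknown you are trying to control. As written, the argument is a plan rather than a proof, and I do not see how to complete it along these lines.

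The paper's proof bypasses all $b_1$ bookkeeping. For a closed uniruled $4$-manifold $X$ of base genus $h$ one has $(e+\sigma)(X)=4-4h$ (check this on $\mathbb{CP}^2$, $S^2\times S^2$ and $S^2$-bundles over $\Sigma_h$, and note that blowing up changes $e$ by $+1$ and $\sigma$ by $-1$, so $e+\sigma$ is a blow-up invariant). Novikov additivity gives $(e+\sigma)(N\cup_Y P)=(e+\sigma)(N)+(e+\sigma)(P)$, so the base genus of $N\cup_Y P$ equals $\frac{1}{4}\bigl(4-(e+\sigma)(P)-(e+\sigma)(N)\bigr)$: an affine, decreasing function of $(e+\sigma)(N)$ in which $P$ enters only as an additive constant. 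Hence $g_{\Delta}(P)=\frac{1}{4}\bigl(\max_N(e+\sigma)(N)-\min_N(e+\sigma)(N)\bigr)$ depends only on $(Y,\xi)$. Incidentally, this shows your ``strongest form'' is true --- $b_1(X_P)-b_1(X_{P_2})=\frac{1}{2}\bigl((e+\sigma)(P_2)-(e+\sigma)(P)\bigr)$ for every $N$ --- but the only route I see to it passes through $e+\sigma$, at which point the Mayer--Vietoris computation becomes superfluous. I would restructure the proof around Novikov additivity.
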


\begin{proof}
 Without loss of generality, we assume $(Y,\xi)$ is fillable.
 Let $N_{\min}$ and $N_{\max}$ be two strong fillings of $(Y,\xi)$ such that the glued manifolds $P \cup_Y N_{\min}$ and $P \cup_Y N_{\max}$ realize $g_{\min}(P)$ and $g_{\max}(P)$, respectively.
Notice that $e+\sigma$ of any uniruled manifold only depends on its base genus.
By Novikov additivity, $(e+\sigma)(P \cup_Y N_{\min})=(e+\sigma)(P) + (e+\sigma)(N_{\min})$ and similarly for $P \cup_Y N_{\max}$.
As a result, the difference $(e+\sigma)(N_{\min})-(e+\sigma)(N_{\max})$ determines $g_{\Delta}(P)$ and hence also $g_{\Delta}(P_2)$.
It is now clear that $g_{\Delta}(P)=g_{\Delta}(P_2)$.
\end{proof}

We remark that Lemma \ref{l:gDelta} implies that $g_{\Delta}(P)$ is an invariant of $(Y,\xi)$ so we also denote it as $g_{\Delta}(Y,\xi)$
and call it the base genus difference.
Since $c_u(Y,\xi)=0$ for planar manifolds (cf. Lemma \ref{rmk: uniruled-planar}), we also have $g_{\Delta}(Y,\xi)=0$ and one can regard the base genus difference as a planar obstruction.
For example, the contact boundaries for the uniruled/adjunction caps that we construct later in Example \ref{ex:e+sign} and Lemma \ref{lemma:gamx > gmin} are non-planar.
%It would be interesting to find examples with $g_{\Delta}(Y,\xi)=0$ but $c_u(Y,\xi) \neq 0$.

Many previous known contact manifolds of Betti finite type are of complexity zero. Especially we have

\begin{lemma}\label{rmk: uniruled-planar}
Every strongly fillable planar contact 3-manifold $(Y,\xi)$ admits a uniruled cap $(P,\omega_P)$ with $g_s(P)=0$.
In particular, we have $c_u(Y,\xi)=g_{\max}(P)=g_{\min}(P)=0$.
\end{lemma}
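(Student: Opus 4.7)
The strategy is to build $P$ as the composition of a symplectic cobordism with a genus-$0$ Lefschetz fibration cap, essentially running the recipe of Proposition~\ref{LF: CY} in genus zero. First, since $(Y,\xi)$ is strongly fillable and planar, Wendl's theorem forces it to be Stein fillable, so the supporting open book $(\Sigma_0^k,\varphi)$ has $\varphi$ factoring as a product of positive Dehn twists $\varphi=t_{C_n}\circ\cdots\circ t_{C_1}$ along non-boundary-parallel simple closed curves in $\Sigma_0^k$. By a stabilization statement in the planar mapping class group, I would then find additional non-boundary-parallel curves $C_{n+1},\ldots,C_m$ and positive integers $I_1,\ldots,I_k$ with
\[ t_{C_m}\circ\cdots\circ t_{C_1}=t_{\delta_k}^{I_k}\circ\cdots\circ t_{\delta_1}^{I_1} \]
in $\textnormal{Map}(\Sigma_0^k)$.

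Mimicking the construction in Proposition~\ref{LF: CY}, I would next attach symplectic $2$-handles to the symplectization $Y\times[0,1]$ along Legendrian realizations of $C_{n+1},\ldots,C_m$ sitting in a page of $Y\times\{1\}$, each with page framing $-1$. This yields a compact symplectic cobordism from $(Y,\xi)$ (concave boundary) to the contact manifold supported by $(\Sigma_0^k, t_{\delta_k}^{I_k}\circ\cdots\circ t_{\delta_1}^{I_1})$ (convex boundary). Gluing $LF_{0,k,I}$ onto the convex end via Lemma~\ref{lem: LF cap} (with $I=(I_1,\ldots,I_k)$) produces the desired symplectic cap $P$ of $(Y,\xi)$.

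To see that $P$ is uniruled and realizes $g_s(P)=0$, I would work inside the piece $LF_{0,k,I}\subset P$ coming from the ambient closed genus-$0$ Lefschetz fibration $\widetilde{X}$. The fiber sphere $F$ of $\widetilde{X}$ is a symplectic sphere lying in the interior of $P$ with $[F]^2=0$ and $c_1(P)\cdot[F]=2>0$ by adjunction, so Lemma~\ref{first source} promotes $(P,\omega_P)$ to a uniruled cap after a small symplectic deformation. For the genus invariant, pick one section $E_1\subset LF_{0,k,I}$ (of self-intersection $-I_1$) together with $a>I_1/2$ pairwise disjoint parallel copies of $F$; each such copy meets $E_1$ transversely and positively at exactly one point. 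Smoothing the $a$ resulting intersection points yields a connected embedded sphere representing $aF+E_1$ with self-intersection $2a-I_1>0$, and therefore $g_s(P)=0$. Lemma~\ref{cor: preliminary inequality} combined with the non-negativity of the base genus then forces $g_{\max}(P)=g_{\min}(P)=0$, and hence $c_u(Y,\xi)=0$ by definition.

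The hardest step will be the positive-factorization extension: showing that in $\textnormal{Map}(\Sigma_0^k)$ the positive monoid generated by Dehn twists along non-boundary-parallel curves contains $(t_{\delta_k}^{I_k}\circ\cdots\circ t_{\delta_1}^{I_1})\circ\varphi^{-1}$ for suitable $I_j$'s. This is the only point where Stein fillability (via Wendl's theorem) really enters, and the existence of such positive extensions is a nontrivial but known fact about planar mapping class groups. Everything else in the proof is standard Lefschetz fibration and inflation bookkeeping, already packaged in the tools developed earlier in the paper.
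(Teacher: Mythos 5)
Your second half is essentially the paper's argument and is fine: the square-zero symplectic fiber sphere $F$ with $c_1(P)\cdot[F]=2$ fed into Lemma~\ref{first source} gives uniruledness, resolving enough parallel copies of $F$ with a section produces an embedded sphere of positive square so that $g_s(P)=0$, and Lemma~\ref{cor: preliminary inequality} then forces $c_u(Y,\xi)=g_{\max}(P)=g_{\min}(P)=0$. The problem is the construction of the cap itself. You run the Proposition~\ref{LF:uniruled cap} machine in genus zero, which requires (i) a positive factorization of $\varphi$ (via Wendl) and then (ii) an extension of that factorization by further positive Dehn twists to a boundary multitwist $t_{\delta_k}^{I_k}\circ\cdots\circ t_{\delta_1}^{I_1}$. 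Step (ii) carries the entire weight of the construction, and you leave it unproved, deferring to ``a nontrivial but known fact about planar mapping class groups.'' Whether a given positive word in $\textnormal{Map}(\Sigma_0^k)$ divides a power of the boundary multitwist in the positive monoid is a genuine assertion that neither the paper nor your proposal supplies, and it is exactly the kind of question (cf.\ the Wand and Baykur--Monden--Van Horn-Morris results on factorizations of boundary multitwists) that cannot be waved through. As stated it is even vacuous for $k\leq 3$, where $\Sigma_0^k$ contains no essential non-boundary-parallel curves. So the proof as written has a real gap at its first step.

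The gap is also avoidable, because the paper takes a shorter route that you should compare with. By \cite{Et04}, \emph{any} planar open book --- with no positivity or fillability hypothesis on $\varphi$ --- admits a concave cap obtained by attaching symplectic $2$-handles along the binding components (capping the pages off to closed spheres) and then gluing in an $S^2$-bundle over $D^2$; no completion of the monodromy by interior vanishing cycles is needed. That cap already contains the square-zero symplectic sphere $S$ and a second embedded sphere $S_2$ meeting it transversally once, which is all your second half uses. In particular the detour through Wendl's theorem and monodromy factorizations can be deleted entirely; strong fillability enters only to ensure that $g_{\max}(P)$ and $g_{\min}(P)$ equal $0$ rather than $-\infty$.
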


\begin{proof}
In \cite{Et04}, it is observed that every planar open book has a cap with an embedded self-intersection $0$ symplectic sphere $S$ inside.
Since $S$ is symplectic,    we can apply  Lemma \ref{first source} to conclude that $(Y, \xi)$ has a uniruled cap.  Next  we show that $g_s(P)=0$ by constructing
an embedded  smooth sphere with positive self-intersection.
Notice that  the construction in \cite{Et04} gives another smoothly embedded sphere $S_2$  intersecting $S$ positively and transversally at one point.
Suppose $S_2$ has self-intersection $k$,  by choosing $n\geq |k|+1$ parallel copies of $S$ and resolving all the positive intersection points of them with  $S_2$, we get an embedded  sphere of positive self-intersection in the cap.

\end{proof}

\subsubsection{Explicit bounds for strong fillings}

We now give explicit Betti number bounds for fillings of uniruled/adjunction contact 3-manifolds in terms of genus invariants.

\begin{lemma}\label{cor:strong:bound}
Let $P$ be a uniruled/adjunction cap of a contact 3-manifold $(Y,\xi)$, and let $N$ be a strong filling of $(Y,\xi)$. Then the following hold. \\
$(1)$ $   \alpha(P)-4g_{\max}(P)  \leq  (e+\sigma)(N)  \leq   \alpha(P)-4g_{\min}(P)$, where $\alpha(P)=4-(e+\sigma)(P)$. \\
$(2)$ $ (b_1+b_3)(N) \leq  4g_{\max}(P)+2b_1(Y)-(b_1+b_3)(P)$.

\end{lemma}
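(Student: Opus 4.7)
The plan is to glue the cap $P$ with the filling $N$ along $Y$ to form a closed uniruled symplectic $4$-manifold $X$, translate the defining bounds $g_{\min}(P)$ and $g_{\max}(P)$ into bounds on the topological invariants of $X$, and then descend these bounds to $N$ via the standard additivity/exact-sequence arguments.

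By Proposition \ref{prop:uniruled}, the glued manifold $X = N \cup_Y P$ is uniruled, and by Definition \ref{adjunction: genus definition} its base genus $g$ satisfies $g_{\min}(P) \le g \le g_{\max}(P)$. A direct calculation on each of the three minimal models in Theorem \ref{t:Uniruled}, together with the blow-up invariance of $e+\sigma$, gives $(e+\sigma)(X) = 4 - 4g$. Novikov additivity of the signature and additivity of the Euler characteristic (since $e(Y) = 0$) then yield $(e+\sigma)(X) = (e+\sigma)(N) + (e+\sigma)(P)$, so $(e+\sigma)(N) = \alpha(P) - 4g$. Substituting the extremal values of $g$ produces part (1) immediately.

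For part (2), I would invoke the standard identity
\[ (b_1 + b_3)(M) = 1 + 2b_2^+(M) + b_2^0(M) - (e+\sigma)(M) \]
valid for any compact oriented $4$-manifold $M$ with $b_0(M) = 1$ and $b_4(M) = 0$, where $b_2^0(M)$ is the nullity of the intersection form. The required inputs are already at hand: $b_2^+(N) = 0$ is established in the proof of Theorem \ref{t:refinedUA}, and the estimate $b_2^0(M) \le b_1(\partial M)$ follows from the long exact sequence of the pair $(M,\partial M)$ combined with Poincar\'e duality on the boundary $3$-manifold. Applying the identity to $N$ and inserting the lower bound for $(e+\sigma)(N)$ from part (1) introduces $\alpha(P)$, which I then eliminate by applying the same identity to $P$ (with $b_2^+(P) = 1$) in favor of $(b_1+b_3)(P)$. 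A short bookkeeping computation, in which the two uses of $b_2^0 \le b_1(Y)$ combine to produce the coefficient $2b_1(Y)$, yields the claimed inequality. This final bookkeeping is the only step that needs care; there is no substantive obstacle beyond it.
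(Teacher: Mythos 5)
Your proposal is correct and follows essentially the same route as the paper: glue $P$ to $N$, use that $e+\sigma$ of a uniruled $4$-manifold equals $4-4g$ for its base genus $g$ together with Novikov additivity for part (1), and for part (2) use the identity $(b_1+b_3)(M)=1+2b_2^+(M)+b_2^0(M)-(e+\sigma)(M)$ with $b_2^+(N)=0$, $b_2^+(P)=1$, and the estimate $b_2^0\le b_1(\partial M)$ applied to both $N$ and $P$. The bookkeeping you defer is exactly the chain of inequalities carried out in the paper and presents no difficulty.
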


\begin{proof}
We continue to use the notation from the proof of Theorem \ref{t:refinedUA}.
Since $X$ is uniruled, $(e+\sigma)(X)$ only depends on the base genus of $X$, and hence $(e+\sigma)(X) \in [-4g_{\max}(P)+4, -4g_{\min}(P)+4]$.
By Novikov additivity,
\begin{eqnarray*}
 &&(e+\sigma)(N) \\
 &=&(e+\sigma)(X)-(e+\sigma)(P) \\
 &\in& [-4g_{\max}(P)-(e+\sigma)(P)+4, -4g_{\min}(P)-(e+\sigma)(P)+4]
\end{eqnarray*}

Here note that $b_2^{0}(Z) \leq b_1(\partial Z)$ for any compact connected oriented 4-manifold $Z$ with boundary,
where $b_2^0(Z)$ denotes the maximal dimension of subspaces of $H_2(Z;\mathbb{R})$ whose intersection from is represented by the zero matrix.
(This can be seen from the long exact sequence for the pair $(Z, \partial Z)$ as follows. By the argument used in the solution of Exercise 5.3.13(f) in \cite{GS99},
it follows that $b_2^0(Z)$ is the rank of the kernel of the homomorphism $H_2(Z;\mathbb{R})\to H_2(Z,\partial Z;\mathbb{R})$.
The exact sequence thus gives $b_2^0(Z)\leq b_2(\partial Z)=b_1(\partial Z)$.)
Therefore, we have
\begin{eqnarray*}
 &&(b_1+b_3)(N) \\
 &=& -(e+\sigma)(N)+1+2b_2^+(N)+b_2^{0}(N)  \\
 &\le& 4g_{\max}(P)+(e+\sigma)(P)-4+1+b_1(Y) \\
 &\le& 4g_{\max}(P)+(1-(b_1+b_3)(P)+2b_2^+(P)+b_2^0(P))-3+b_1(Y)\\
 &\le& 4g_{\max}(P)+2b_1(Y)-(b_1+b_3)(P).
\end{eqnarray*}

\end{proof}

\begin{corr}\label{lem: strong extension of Wand}
Let $(Y,\xi)$ be a uniruled/adjunction contact 3-manifold.
Then $g_{\Delta}(Y,\xi)=0$ is equivalent to $e(N)+\sigma(N)$ being a constant for any strong fillings $N$ of $(Y,\xi)$.
In particular, if $c_u(Y,\xi)=0$ then $e(N)+\sigma(N)$ is a constant.
\end{corr}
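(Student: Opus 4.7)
The plan is to deduce the corollary directly from the combination of Lemma~\ref{cor:strong:bound}(1), Lemma~\ref{l:gDelta}, and the elementary fact that for a closed symplectically uniruled 4-manifold $X$ the quantity $e(X)+\sigma(X)$ is determined entirely by the base genus. Indeed, by Theorem~\ref{t:Uniruled} every minimal uniruled manifold is either $\mathbb{CP}^2$, $S^2\times S^2$, or an $S^2$-bundle over $\Sigma_h$, and a direct check gives $(e+\sigma)=4-4h$ in each case; since symplectic blow-up preserves $e+\sigma$ (it adds $+1$ to $e$ and $-1$ to $\sigma$), this formula persists for every uniruled $X$. Thus, if $P$ is any uniruled/adjunction cap of $(Y,\xi)$ and $N$ is a strong filling, then Proposition~\ref{prop:uniruled} guarantees that $X=P\cup_Y N$ is uniruled, and Novikov additivity yields
\[
(e+\sigma)(N)\;=\;(e+\sigma)(X)-(e+\sigma)(P)\;=\;\bigl(4-4g(X)\bigr)-(e+\sigma)(P).
\]

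First I would fix a uniruled/adjunction cap $P$ and read off from the displayed identity above that as $N$ ranges over strong fillings of $(Y,\xi)$, the set of values of $(e+\sigma)(N)$ is in bijection with the set of base genera $g(X)$, which by definition ranges over the interval $[g_{\min}(P),g_{\max}(P)]\cap\mathbb{Z}$ of realized values. From this, $(e+\sigma)(N)$ being a constant for all strong fillings is equivalent to $g_{\min}(P)=g_{\max}(P)$, i.e. $g_\Delta(P)=0$. Next I would invoke Lemma~\ref{l:gDelta}, which asserts that $g_\Delta(P)$ is independent of the choice of cap and equals $g_\Delta(Y,\xi)$. This gives the claimed equivalence.

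For the ``in particular'' clause, suppose $c_u(Y,\xi)=0$. Since $g_{\max}(P)$ is a non-negative integer for any uniruled/adjunction cap (assuming $(Y,\xi)$ is strongly fillable; otherwise the statement is vacuous), the infimum is attained by some cap $P$ with $g_{\max}(P)=0$. Combining with the trivial inequality $0\le g_{\min}(P)\le g_{\max}(P)$ from Lemma~\ref{cor: preliminary inequality}, we obtain $g_\Delta(P)=0$, and hence $e(N)+\sigma(N)$ is constant by the equivalence just established.

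There is no substantive obstacle beyond bookkeeping: the two directions of the equivalence follow formally once one has the formula $(e+\sigma)(X)=4-4g(X)$ for closed uniruled four-manifolds and Novikov additivity, together with the fact (Lemma~\ref{l:gDelta}) that $g_\Delta$ is a well-defined invariant of $(Y,\xi)$. The only small point worth verifying carefully is that every integer value $g\in[g_{\min}(P),g_{\max}(P)]$ need not be realized by some filling; but this is irrelevant for the argument, since we only need the equivalence ``only one value realized $\iff g_{\min}=g_{\max}$,'' which uses only the two extremes.
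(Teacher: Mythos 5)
Your argument is correct and is essentially the paper's: the authors deduce the corollary from Lemma~\ref{l:gDelta} together with Lemma~\ref{cor:strong:bound}, whose proof is exactly your computation $(e+\sigma)(N)=(4-4g(X))-(e+\sigma)(P)$ via Theorem~\ref{t:Uniruled} and Novikov additivity. Your explicit remark that the extremes $g_{\min}(P)$, $g_{\max}(P)$ are by definition realized by actual fillings (so that constancy of $e+\sigma$ forces $g_\Delta=0$, not just the converse) is a worthwhile clarification of the ``equivalence'' claim, but it is the same reasoning the paper uses in proving Lemma~\ref{l:gDelta} itself.
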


\begin{proof}
It follows directly from Lemma \ref{l:gDelta} and Lemma \ref{cor:strong:bound}.
\end{proof}

Using Lemma \ref{rmk: uniruled-planar}   one can see that Corollary \ref{lem: strong extension of Wand} can be viewed as an extension of Wand's result
quoted as Corollary  \ref{cor:Wand}.

%\begin{proof}[Alternative proof of Corollary \ref{cor:Wand}]
% It follows directly from Corollary \ref{lem: strong extension of Wand} and Lemma \ref{rmk: uniruled-planar}.
%\end{proof}

\subsubsection{Explicit bounds for Stein fillings}

For Stein fillings, we can also define the corresponding $g^{Stein}_{\max}$, $g^{Stein}_{\min}$ and $g^{Stein}_{\Delta}$, analogous to $g_{\max}$, $g_{\min}$ and $g_{\Delta}$.  Clearly,
$$  g^{Stein}_{\min}(P) \geq  g_{\min}(P), \quad     g^{Stein}_{\max}(P) \leq g_{\max}(P), \quad     g^{Stein}_{\Delta}(P) \leq  g_{\Delta}(P).$$

We obtain concrete bounds on $e+\sigma$ of Stein fillings, which might be helpful to classify Stein fillings.

\begin{lemma}\label{cor: stein : bounds} Suppose $(Y,\xi)$ admits a uniruled/adjunction cap $P$. Then for any Stein filling $N$ of $(Y,\xi)$, the following inequalities hold.
$$      g^{Stein}_{\max}(P) \le  \lfloor
\dfrac {b_1(P)}{2}\rfloor$$
$$\alpha(P)-4g^{Stein}_{\max}(P)  \leq (e+\sigma)(N)\leq \alpha(P)-4g^{Stein}_{\min}(P),$$
where $\alpha(P)=4- (e+\sigma)(P)$, and
 $\lfloor r \rfloor$ denotes the maximal integer less than or equal to $r$.
\end{lemma}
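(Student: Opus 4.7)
The plan is to extend the Novikov-additivity argument from Lemma~\ref{cor:strong:bound} by exploiting the handle-theoretic constraints imposed by a Stein structure. Fix a Stein filling $N$ of $(Y,\xi)$ and write $X = N \cup_Y P$. By Proposition~\ref{prop:uniruled} the closed manifold $X$ is uniruled, so if $g$ denotes its base genus then $b_1(X) = 2g$ and $(e+\sigma)(X) = 4 - 4g$.

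The second inequality is then immediate: Novikov additivity gives
\[
(e+\sigma)(N) \;=\; (e+\sigma)(X) - (e+\sigma)(P) \;=\; \alpha(P) - 4g,
\]
and as $N$ varies over Stein fillings, $g$ ranges over $[g^{Stein}_{\min}(P),\, g^{Stein}_{\max}(P)]$ by definition, yielding the desired two-sided bound on $(e+\sigma)(N)$.

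For the first inequality, I would use Eliashberg's theorem that any Stein filling $N$ admits a handle decomposition with handles of index at most $2$. Turning this decomposition upside down, $(N,Y)$ has a relative handle decomposition with handles of index at least $2$, and so $H_j(N,Y;\mathbb{Q}) = 0$ for $j \le 1$. In particular $H_1(Y;\mathbb{Q}) \twoheadrightarrow H_1(N;\mathbb{Q})$. Applying Mayer--Vietoris to $X = N \cup_Y P$, and using connectedness of $Y, N, P, X$ to see that the map $H_0(Y;\mathbb{Q}) \to H_0(N;\mathbb{Q}) \oplus H_0(P;\mathbb{Q})$ is injective, we identify $H_1(X;\mathbb{Q})$ with the cokernel of $H_1(Y;\mathbb{Q}) \to H_1(N;\mathbb{Q}) \oplus H_1(P;\mathbb{Q})$. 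Since the first-factor projection of this map is already surjective, its image has rank at least $b_1(N)$, forcing $b_1(X) \le b_1(P)$. Combining with $b_1(X) = 2g$ and $g \in \mathbb{Z}_{\ge 0}$ gives $g \le \lfloor b_1(P)/2 \rfloor$, and taking the maximum over Stein fillings yields $g^{Stein}_{\max}(P) \le \lfloor b_1(P)/2 \rfloor$.

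I do not expect any serious obstacle: the whole argument reduces to Novikov additivity plus a Mayer--Vietoris rank estimate once the Stein-handle input is imported. The only point requiring a line or two of explicit justification is the cokernel bound, namely that surjectivity of $H_1(Y;\mathbb{Q}) \to H_1(N;\mathbb{Q})$ forces the Mayer--Vietoris cokernel to be a quotient of $H_1(P;\mathbb{Q})$.
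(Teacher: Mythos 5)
Your proposal is correct and follows essentially the same route as the paper: Novikov additivity plus $(e+\sigma)(X)=4-4g$ for the two-sided bound, and the fact that a Stein filling is built from handles of index at most $2$ to force $b_1(X)\le b_1(P)$, hence $g\le\lfloor b_1(P)/2\rfloor$ since $b_1(X)=2g$ is even. The only difference is cosmetic: the paper deduces $b_1(X)\le b_1(P)$ directly from the upside-down handle decomposition, whereas you spell it out via the surjectivity of $H_1(Y;\mathbb{Q})\to H_1(N;\mathbb{Q})$ and a Mayer--Vietoris cokernel estimate, which is a valid and slightly more explicit justification of the same step.
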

\begin{proof}Let $N$ be a Stein filling of $(Y,\xi)$, and let $X$ be the closed uniruled 4-manifold obtained by gluing $N$ and $P$ along $(Y,\xi)$. Since any Stein filling has a handle decomposition which consists of $0$-, $1$- and $2$-handles, and $b_1$ of any uniruled manifold is even, we obtain $b_1(X)\leq 2\lfloor \dfrac {b_1(P)}{2}\rfloor$. Therefore, the claim follows from the fact $(e+\sigma)(N)=(e+\sigma)(X)-(e+\sigma)(P)$.
\end{proof}

In particular, we have the following extension of Corollary \ref{cor:Wand}.

\begin{corr}\label{cor:stein:e+sigma} Suppose $(Y,\xi)$ has a uniruled/adjunction cap $P$.
Assume further $b_1(P)\leq 1$.
Then $e+\sigma$ of a Stein filling of $(Y,\xi)$ does not depend on the choice of the Stein filling.
\end{corr}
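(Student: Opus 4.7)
The plan is to deduce this immediately from Lemma~\ref{cor: stein : bounds}, using the hypothesis $b_1(P) \le 1$ to pin down both $g^{Stein}_{\max}(P)$ and $g^{Stein}_{\min}(P)$ exactly.

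First, I would dispose of the trivial case: if $(Y,\xi)$ admits no Stein filling, the statement is vacuous. So assume there is at least one Stein filling $N$, which means that $g^{Stein}_{\max}(P)$ and $g^{Stein}_{\min}(P)$ are honest non-negative integers (they are the genera of the base of closed uniruled manifolds, so they lie in $\mathbb{Z}_{\ge 0}$).

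Next, apply the first inequality of Lemma~\ref{cor: stein : bounds}: since $b_1(P) \le 1$ gives $\lfloor b_1(P)/2 \rfloor = 0$, we get $g^{Stein}_{\max}(P) \le 0$, hence $g^{Stein}_{\max}(P) = 0$. Combined with the obvious inequality $g^{Stein}_{\min}(P) \le g^{Stein}_{\max}(P)$ and non-negativity, this forces $g^{Stein}_{\min}(P) = g^{Stein}_{\max}(P) = 0$.

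Finally, substituting into the second inequality of Lemma~\ref{cor: stein : bounds} collapses the range to a single value:
\[
\alpha(P) \;=\; \alpha(P) - 4 g^{Stein}_{\max}(P) \;\le\; (e+\sigma)(N) \;\le\; \alpha(P) - 4 g^{Stein}_{\min}(P) \;=\; \alpha(P),
\]
so $(e+\sigma)(N) = \alpha(P) = 4 - (e+\sigma)(P)$, which depends only on the cap $P$ and not on the choice of Stein filling $N$. There is essentially no obstacle here beyond ensuring that the genus invariants are indeed non-negative when Stein fillings exist; the corollary is a direct numerical consequence of the preceding lemma.
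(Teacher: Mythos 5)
Your proof is correct and follows exactly the route the paper takes: the paper's proof is simply the one-line remark that the corollary ``clearly follows from Lemma~\ref{cor: stein : bounds},'' and your write-up supplies precisely the intended details ($\lfloor b_1(P)/2\rfloor=0$ forces $g^{Stein}_{\max}(P)=g^{Stein}_{\min}(P)=0$, collapsing the two-sided bound to $(e+\sigma)(N)=\alpha(P)$). Nothing to add.
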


\begin{proof}
This clearly follows from Lemma \ref{cor: stein : bounds}.
\end{proof}

\begin{rmk}\label{rmk: alternative}Here we give an alternative proof of the Wand's result Corollary~\ref{cor:Wand}.
This proof can be extended to contact 3-manifolds $Y$ admitting adjunction cap $P$ with $b_1(P)\leq 1$. We note that any minimal strong filling of a planar contact 3-manifold is a Stein filling up to deformation (See \cite{We10}).

By \cite{Et04}, it is easy to see that any planar contact 3-manifold has a simply connected adjunction cap. Therefore, the closed symplectic 4-manifold obtained by gluing the cap to a given Stein filling is simply connected.
Since this manifold violates the Seiberg-Witten adjunction inequality with $b_2^+ \ge 2$ (see \cite{KrMr94}, \cite{MoSzTa96}), this closed manifold has $b_2^+=1$.
Therefore, $e+\sigma$ of the closed manifold is independent of the choice of the Stein filling. Since $e+\sigma$ of the closed manifold is the sum of those of the Stein filling and the cap, the desired claim follows.
\end{rmk}

\subsection{Construction}\label{Examples for Uniruled/Adjunction Caps}

We provide two general constructions for uniruled/adjunction caps.
The first one is equipped with explicit open book descriptions and the second one comes from symplectic divisors.
In particular, these examples provide many contact manifolds of strong Betti finite type.
All the uniruled caps constructed in this subsection are also adjunction caps, and vice versa.

\subsubsection{Complexity and  open books}

The following result provides us uniruled/adjunction caps with $b_1\leq 1$ associated to  a certain class of contact 3-manifolds with
an open book.
Notice that, for such contact 3-manifolds, $e+\sigma$ of their Stein fillings is constant by Corollary \ref{cor:stein:e+sigma}.

\begin{prop}\label{LF:uniruled cap} Let $(Y,\xi)$ be a contact 3-manifold supported by an open book $(\Sigma_g^k, \varphi)$ with $g\geq 1 $ and $k\geq 2g-1$. If there exist (possibly empty set of) homotopicaly non-trivial simple closed curves $C_1, C_2,\dots, C_n$ in $\Sigma_g^k$
such that $t_{C_n}\circ t_{C_{n-1}}\circ\dots \circ t_{C_{1}} \circ \varphi = t_{\delta_k}\circ t_{\delta_{k-1}}\circ\dots \circ t_{\delta_{1}}$ in $\textnormal{Map}(\Sigma_g^k)$,
then $(Y,\xi)$ admits a cap which is uniruled and adjunction. Furthermore, the fundamental group of the cap is the quotient group $\pi_1(\Sigma_g)/\langle C_1, C_2, \dots, C_n \rangle$. %, where $\langle C_1, C_2, \dots, C_n \rangle$ is the normal closure of the subgroup generated by these curves.
\end{prop}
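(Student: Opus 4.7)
The plan is to mimic the construction in the proof of Proposition~\ref{LF: CY}, only now exploiting the hypothesis $k \ge 2g-1$ to locate a high self-intersection smooth surface inside Gay's cap rather than asking the closed Lefschetz fibration to be a blowup of a Calabi--Yau surface. Concretely, starting from the symplectization $(Y\times[0,1],\omega)$ of $(Y,\xi)$, I would use the Legendrian realization principle and folding (as in \cite{A14}, \cite{OS04AMS}) to realize each $C_i$ as a Legendrian curve on a page of $Y\times\{1\}$, and then successively attach Weinstein $2$-handles along $C_1,\dots,C_n$ with framing $-1$ relative to the page. The hypothesis on the product $t_{C_n}\circ\cdots\circ t_{C_1}\circ\varphi = t_{\delta_k}\circ\cdots\circ t_{\delta_1}$ guarantees that the resulting symplectic cobordism $W'$ has convex boundary supported by $(\Sigma_g^k,t_{\delta_k}\circ\cdots\circ t_{\delta_1})$, so by Lemma~\ref{lem: LF cap} one can glue Gay's cap $LF_{g,k}$ along this boundary, producing a symplectic cap $W''$ of $(Y,\xi)$.

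Next I would exhibit the required surface inside $LF_{g,k}\subset W''$. Recall that $LF_{g,k}$ is the plumbing of a symplectic genus-$g$ fiber $F$ of self-intersection $0$ with $k$ symplectic sphere-sections $S_1,\dots,S_k$ of self-intersection $-1$, each meeting $F$ transversely and positively at one point and disjoint from each other. Symplectically resolving the $k$ nodal intersections of the divisor $F\cup S_1\cup\cdots\cup S_k$ yields an embedded symplectic surface $S$ of genus $g$ representing $[F]+\sum_i[S_i]$, whose self-intersection is $0+2k-k=k$. Since $k\ge 2g-1$, the surface $S$ directly verifies that $W''$ is an adjunction cap. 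For the uniruled property, the adjunction formula gives $c_1(W'')\cdot[S]=[S]^2+2-2g=k+2-2g\ge 1>0$ with $[S]^2\ge 0$, so Lemma~\ref{first source} applies and, possibly after a symplectic inflation along $S$, $W''$ is also a uniruled cap.

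Finally, to identify the fundamental group I would argue as in the last paragraph of the proof of Proposition~\ref{LF: CY}: the handle-slide observation in the proof of Lemma~\ref{lem: LF cap} expresses $W''$ as the result of attaching $2$-handles to $LF_{g,k}$ along the curves $C_1,\dots,C_n$ sitting in $\Sigma_g^k\times\partial D^2\subset\Sigma_g\times\partial D^2$ with page framing $-1$. Because $LF_{g,k}$ deformation retracts onto $\Sigma_g\cup S_1\cup\cdots\cup S_k$ and each $S_i$ is a sphere attached at a single point, $\pi_1(LF_{g,k})=\pi_1(\Sigma_g)$, and attaching the $n$ additional $2$-handles kills precisely the conjugacy classes of $C_1,\dots,C_n$, yielding $\pi_1(W'')\cong \pi_1(\Sigma_g)/\langle C_1,\dots,C_n\rangle$.

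The main technical point, as in the Calabi--Yau analogue, will be verifying the claimed handle decomposition of $W''$, in particular confirming that the boundary 2-handle attachments from the symplectization piece $W'$, once combined with $LF_{g,k}$, are isotopic to $2$-handles on the vanishing cycles sitting in a fiber. Once this standard Lefschetz-fibration bookkeeping is in place, everything else reduces to the elementary computations above and the already established Lemma~\ref{first source}.
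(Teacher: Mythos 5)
Your proposal is correct and follows essentially the same route as the paper: the same construction of $W''$ via Weinstein handle attachments and gluing $LF_{g,k}$, the same exhibition of a symplectic genus-$g$ surface of square $k\ge 2g-1$ feeding into Lemma~\ref{first source}, and the same $\pi_1$ computation as in Proposition~\ref{LF: CY}. The only cosmetic difference is that the paper blows down the $k$ exceptional sections to recover Gay's surface in $G_{g,k}$, whereas you resolve the nodes of the plumbing to produce the homologous surface $[F]+\sum_i[S_i]$ directly in $W''$; both yield the same genus-$g$, self-intersection-$k$ symplectic surface.
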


\begin{proof}[Proof of Proposition~\ref{LF:uniruled cap}]
%We assume $C_1, C_2, \dots, C_n$ are homotopically non-trivial. The argument also provides a proof of the case where $C_1, C_2, \dots, C_n$ are homotopically trivial.
%If some $C_i$ is homologically trivial (thus separating) curve, then we easily see that $t_{C_i}$ is a composition of positive Dehn twists along non-separating curves using Lemma 21 in \cite{Waj99}. Therefore we may assume each $C_i$ is homologically non-trivial.
Similarly to Proposition~\ref{LF: CY}, we obtain a symplectic cap $W''$ of $(Y, \xi)$ which contains $LF_{g,k}$. Let $P$ be a cap obtained from $W''$ by blowing down $k$ sections with self-intersection $-1$ in $LF_{g,k}$.
Due to Lemma~\ref{lem: LF cap}, $P$ contains a symplectic surface of genus $g$ with self-intersection $k(> 2g-2)$.
The adjunction formula and Lemma~\ref{first source} thus implies that $P$ is both uniruled and adjunction. Similarly to Proposition~\ref{LF: CY}, we see that $\pi_1(P)$ is isomorphic to $\pi_1(\Sigma_g)/\langle C_1, C_2, \dots, C_n \rangle$.
\end{proof}

Note that the relations in $\textnormal{Map}(\Sigma_g^k)$ obtained by \cite{KoOz08}, \cite{Ona10} and \cite{Ta12} give us many concrete examples of Stein fillable contact structures satisfying the above assumption.
Furthermore, there are non-planar contact 3-manifolds satisfying the assumption (e.g.\ the Stein fillable contact structure on $T^3$. See \cite{VHM07}. cf.\ \cite{Y14})

Using our building blocks $LF_{g,k,I}$ and $(Y_{g,k,I},\xi_{g,k,I})$, we will further construct a uniruled/adjunction contact 3-manifold with positive complexity.
%can construct more interesting examples. The next example tells us that $e+\sigma$ of Stein fillings of a fixed contact 3-manifold are not necessarily constant even if it is the concave boundary of an uniruled, adjunction cap.
%\begin{example}\label{ex:e+sign}There exists a uniruled and adjunction cap of a contact 3-manifold $(Y,\xi)$ satisfying the following: $(Y, \xi)$ admits
%two Stein fillings whose $e+\sigma$ are not equal to each other.
%In particular, $c_u(Y,\xi) > 0$ and $(Y,\xi)$ is non-planar.
%\end{example}
To find this example, we observe the following Lemma.

\begin{lemma}\label{lem:obvious filling}
For each $g\geq 0$ and $k\geq 1$, there exists a Stein filling $N_{g,k}$ of $(Y_{g,k}, \xi_{g,k})$ such that the glued symplectic 4-manifold
$N_{g,k}\cup LF_{g,k}$ is a closed uniruled 4-manifold with $b_1=2g$.
\end{lemma}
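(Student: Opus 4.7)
I would prove the lemma by exhibiting $N_{g,k}$ and the closed uniruled manifold $\widetilde{X} = N_{g,k} \cup LF_{g,k}$ simultaneously, starting from a convenient closed Lefschetz fibration. The natural candidate is $\widetilde{X} = (\Sigma_g \times S^2)\#\, k\,\overline{\mathbb{CP}^2}$, obtained by blowing up the trivial $\Sigma_g$-fibration $\Sigma_g \times S^2 \to S^2$ at $k$ points $(x_1, p_1), \ldots, (x_k, p_k)$ with distinct $x_j\in\Sigma_g$ and distinct $p_j \in S^2$. This $\widetilde{X}$ is clearly uniruled and $b_1(\widetilde{X}) = b_1(\Sigma_g\times S^2) = 2g$, so those two conclusions of the lemma come for free.

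Next I would locate $LF_{g,k}$ inside $\widetilde{X}$ as a regular neighborhood of $F \cup \tilde s_1 \cup \cdots \cup \tilde s_k$, where $F = \Sigma_g \times \{p_0\}$ for some $p_0$ distinct from every $p_j$, and $\tilde s_j$ is the proper transform of the section $\{x_j\} \times S^2$. Then $[F]^2 = 0$ and $[\tilde s_j]^2 = -1$; the $\tilde s_j$ are pairwise disjoint sections of the induced projection $\pi: \widetilde{X}\to S^2$, each meeting $F$ transversely in a single point. This plumbing matches the description of $LF_{g,k}$ from Lemma~\ref{lem: LF cap}, so setting $N_{g,k} := \widetilde{X} \setminus \mathrm{int}(LF_{g,k})$ automatically gives $N_{g,k} \cup LF_{g,k} = \widetilde{X}$.

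The remaining task is to verify that $N_{g,k}$ is a Stein filling of $(Y_{g,k}, \xi_{g,k})$. I would choose an embedded disk $D \subset S^2$ containing $p_1,\ldots,p_k$ but not $p_0$; then $\pi$ restricts to a map $N_{g,k}\to D$ that is a Lefschetz fibration with bounded fiber $\Sigma_g^k$ (the $k$ punctures coming from removing the sections $\tilde s_j$), with $k$ critical values $p_1,\ldots,p_k$. Near each $p_j$, the reducible fiber $F_j \cup E_j$ of $\pi$ (with $F_j$ the proper transform of $\Sigma_g \times \{p_j\}$ and $E_j$ the exceptional divisor) corresponds, after excising the sections, to pinching the boundary-parallel curve $\delta_j\subset \Sigma_g^k$ encircling the puncture produced by $\tilde s_j$. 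Hence the induced open book on $\partial N_{g,k}$ is exactly $(\Sigma_g^k, t_{\delta_k}\circ\cdots\circ t_{\delta_1})$, i.e.\ $(Y_{g,k},\xi_{g,k})$, and the Akbulut--Ozbagci / Loi--Piergallini correspondence between Lefschetz fibrations over $D^2$ with bounded fiber and Stein structures endows $N_{g,k}$ with the desired Stein structure inducing $\xi_{g,k}$ on its boundary.

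The step I expect to be most delicate is the identification of the vanishing cycles with $\delta_j$ and the subsequent appeal to the Stein correspondence: since $\delta_j$ is boundary-parallel, it is essential in $\Sigma_g^k$ but becomes null-homotopic in the closed fiber $\Sigma_g$, so the singular fibers of $\pi$ sit at the borderline of the Lefschetz-fibration convention and correspond to blow-up operations once the sections are glued back in. I would therefore write out the local model near each $p_j$ explicitly---identifying the Milnor-fiber neighborhood of the node $F_j \cap E_j$ with a standard annular collar of the boundary component $\partial_j \Sigma_g^k$ produced by removing $\tilde s_j$---so as to make the Lefschetz--Stein correspondence apply verbatim.
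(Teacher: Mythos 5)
Your proof is correct and is essentially the paper's argument run in reverse: the paper defines $N_{g,k}$ abstractly as the Lefschetz fibration over $D^2$ with fiber $\Sigma_g^k$ and monodromy factorization $t_{\delta_k}\circ\cdots\circ t_{\delta_1}$, invokes the same Lefschetz--Stein correspondence, and then observes that the closed-up manifold is the genus-$g$ fibration over $S^2$ with all vanishing cycles trivial, i.e.\ your $(\Sigma_g\times S^2)\#\,k\,\overline{\mathbb{CP}^2}$. Starting from the closed manifold and carving out $LF_{g,k}$ as the neighborhood of a fiber and $k$ disjoint $(-1)$-sections is the same decomposition, just built in the opposite order.
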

\begin{proof}Let $N_{g,k}$ be the Lefschetz fibration over $D^2$ with fiber $\Sigma_g^k$ whose monodromy factorization is
$t_{\delta_k}\circ \dots \circ t_{\delta_1}$. Then $N_{g,k}$ is a Stein filling of $(Y_{g,k}, \xi_{g,k})$ (see \cite{AO1}, \cite{A14}, \cite{OS04AMS}).
It is easy to see that the glued closed 4-manifold is smoothly diffeomorphic to the genus $g$ Lefschetz fibration over $S^2$ whose vanishing cycles are all trivial curves.
Therefore, the closed manifold is uniruled and has $b_1=2g$.
\end{proof}

We can now easily construct the desired example.
\begin{example}\label{ex:e+sign}There exists a uniruled and adjunction cap of a contact 3-manifold $(Y,\xi)$ satisfying the following: $(Y, \xi)$ admits
two Stein fillings whose $e+\sigma$ are not equal to each other.
In particular, $c_u(Y,\xi) > 0$ and $(Y,\xi)$ is non-planar.

We construct this example as follows. Let us recall a result of Tanaka~\cite{Ta12}. He gave a factorization of $t_{\delta_{4g+4}}\circ t_{\delta_{4g+3}}\circ \dots \circ t_{\delta_1}$ into a product of positive Dehn twists in $\textnormal{Map}(\Sigma_g^{4g+4})$ for each $g\geq 1$. Furthermore, he proved that the factorization in
$\textnormal{Map}(\Sigma_g)$ is a monodromy factorization of a genus $g$ Lefschetz fibration over $S^2$ whose total space is diffeomorphic to
$\mathbb{C}\mathbb{P}^2\#(4g+5)\overline{\mathbb{C}\mathbb{P}^2}$. Since the vanishing cycles of the Lefschetz fibration are non-separating in $\Sigma_g$, the curves appearing in Tanaka's factorization are all homotopically non-trivial in $\Sigma_g^{4g+4}$. By capping off boundary components,
we immediately get such a factorization of $t_{\delta_{k}}\circ t_{\delta_{k-1}}\circ \dots \circ t_{\delta_1}$ in $\textnormal{Map}(\Sigma_g^{k})$ for each $1\leq k\leq 4g+4$. Now let $X_{g,k}$ be the Lefschetz fibration over $D^2$ with fiber $\Sigma_g^k$ that has this monodromy factorization.
By these observations, we see that $X_{g,k}$ is a Stein filling of $(Y_{g,k}, \xi_{g,k})$ and that the glued symplectic 4-manifold $X_{g,k}\cup LF_{g,k}$ is diffeomorphic to $\mathbb{C}\mathbb{P}^2\#(4g+5)\overline{\mathbb{C}\mathbb{P}^2}$. Since $(e+\sigma)(LF_{g,k})=3-2g$ and $(e+\sigma)(\mathbb{C}\mathbb{P}^2\#(4g+5)\overline{\mathbb{C}\mathbb{P}^2})=4$, it follows $(e+\sigma)(X_{g,k})=1+2g$. On the other hand, direct computation shows that the Stein filling of $(Y_{g,k}, \xi_{g,k})$ in Lemma~\ref{lem:obvious filling} satisfies $e+\sigma=1-2g$. Since $(Y_{g,k}, \xi_{g,k})$ admits a uniruled/adjunction cap in the case $k\geq 2g-1$, the desired claim follows.
\end{example}

%\begin{rmk}It seems to be unknown whether $LF_{g,k,I}$ $(I\neq (1,1,\dots,1))$ can be a symplectic cap of $(Y_{g,k,I},\xi_{g,k,I})$.
%As seen from the constructions in this section, such a symplectic structure on $LF_{g,k,I}$ is very useful for constructions of various caps via Lefschetz fibrations.
%We remark that, in the case where the intersection form of $LF_{g,k,I}$ has non-zero determinant, $LF_{g,k,I}$
%becomes a symplectic cap of some strongly fillable contact structure on $Y_{g,k,I}$.
%This follows from Theorem \ref{non-negative implies concave} and Theorem 10.2.18 in \cite{GS99}, since each $LF_{g,k,I}$ is the neighborhood of a regular fiber and pairwise disjoint sections of a Lefschetz fibration.
%\end{rmk}

The last example below tells us that a cap which is symplectically embedded into a uniruled 4-manifold is not necessarily uniruled or adjunction.

\begin{example}\label{ex:non-uniruled cap}There exists a cap $P$ of a contact 3-manifold $(Y,\xi)$ satisfying the following: $(Y, \xi)$ admits two Stein fillings
$N_1$ and $N_2$ such that the glued symplectic closed 4-manifold $N_1\cup P$ (resp.\ $N_2\cup P$) is uniruled (resp.\ not uniruled).
Consequently, the cap $P$ is neither uniruled nor adjunction.

To construct such an example we use the following fact:  for each $g\geq 2$, $t_{\delta_1}\in \textnormal{Map}(\Sigma_g^1)$ has a factorization
into positive Dehn twists along non-separating (thus homologically non-trivial) curves such that the genus $g$ Lefschetz fibration over $S^2$ given by this
factorization satisfies $b_2^+>1$ (e.g.\ $M_{C_{II}}$ in Section 4 of \cite{En08}). Therefore, the factorization gives a Stein filling $X_{g,1}$ of
$(Y_{g,1}, \xi_{g,1})$ $(g\geq 2)$ such that the glued symplectic closed 4-manifold $X_{g,1}\cup LF_{g,1}$ is not uniruled, since $X_{g,1}\cup LF_{g,1}$ is diffeomorphic
to the above Lefschetz fibration. The claim thus follows from Lemma~\ref{lem:obvious filling}.
\end{example}

\subsubsection{Realizing pairs $(g_{\max}(P),g_{\min}(P))$ via plumbings}

It is natural to ask when a non-negative integer pair can be realized by $(g_{\max}(P),g_{\min}(P))$.
%We are going to see many examples of uniruled caps with different triples.
The first Chern class and the class of symplectic form are completely understood if the cap is a symplectic divisor cap,
from which we construct many examples realizing different pairs $(g_{\max}(P),g_{\min}(P))$.

%$\bullet$ Examples of $g_{\max}(P)=g_{\min}(P)=0$

%In this subsubsection, we give several examples with $g_{\max}(P)=0$, which in particular implies $g_{\min}(P)=0$.

%\begin{itemize}
 %\item Examples with $g_{\max}(P)=g_{\min}(P)=0$
%\end{itemize}

We begin with uniruled divisor caps with  $g_{\max}(P)=g_{\min}(P)=0$.

\begin{lemma}\label{cor: T^3}
 $(T^3,\xi_{std})$ admits a uniruled divisor cap $(P,\omega_P)$ with $g_s(P)=g_{\Delta}(P)=0$ and hence $c_u(T^3,\xi_{std})=0$
\end{lemma}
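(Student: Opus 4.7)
The plan is to realize $(T^3,\xi_{std})$ as the concave boundary of an explicit symplectic divisor in $S^2\times S^2$ equipped with a product symplectic form $\omega=\pi_1^*\omega_1+\pi_2^*\omega_2$.

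First, I would take $D=F_1\cup F_2\cup S_1\cup S_2$, where $F_i=\{p_i\}\times S^2$ are two disjoint fiber spheres and $S_j=S^2\times\{q_j\}$ are two disjoint section spheres. Then $D$ is an $\omega$-orthogonal symplectic divisor whose augmented graph is a cycle of $4$ vertices, each of genus $0$ and self-intersection $0$. The intersection matrix is degenerate, but the positive Gay--Stipsicz criterion $Qz=a$ still has a positive solution: since $\omega$ assigns equal areas $a_1=a_3$ to the two fibers and $a_2=a_4$ to the two sections, one may take $z_1=z_3=a_2/2$ and $z_2=z_4=a_1/2$. By Theorem~\ref{criterion}, $D$ is a concave divisor and $P:=\nu(D)$ is a symplectic cap of $\partial P$.

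Next I would identify the concave boundary $\partial P$ with $(T^3,\xi_{std})$. The complement $S^2\times S^2\setminus D$ is diffeomorphic to $(\mathbb{C}^*)^2$, is homotopy equivalent to $T^2$, and is the standard log Calabi--Yau Liouville filling of $(T^3,\xi_{std})$. Equivalently, since each $F_i$ and $S_j$ lies in $D$, the restriction $H^2(S^2\times S^2;\mathbb{R})\to H^2(S^2\times S^2\setminus D;\mathbb{R})$ kills $[\omega]$, so $\omega$ is exact on $\partial P$; then Theorem~\ref{non-negative implies concave} yields that the contact structure on $\partial P$ depends only on the weighted graph, which forces it to be $\xi_{std}$.

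To see $P$ is uniruled and adjunction, I would push $F_1$ off $D$ to get a symplectic sphere $\tilde F_1\subset\mathrm{int}(P)$ with $[\tilde F_1]^2=0$ and $c_1(P)\cdot[\tilde F_1]=c_1(S^2\times S^2)\cdot[F_1]=2>0$, and apply Lemma~\ref{first source}. For the genus invariants, push off both $F_1$ and $S_1$ into $\mathrm{int}(P)$ to obtain symplectic spheres $\tilde F_1,\tilde S_1$ meeting transversely in one point; smoothing the intersection positively yields a smoothly embedded sphere $\Sigma\subset P$ with $[\Sigma]^2=([\tilde F_1]+[\tilde S_1])^2=2>0$, so $g_s(P)=0$. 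By Lemma~\ref{cor: preliminary inequality}, $g_{\max}(P)\le g_s(P)=0$, hence $g_{\Delta}(P)=0$ and $c_u(T^3,\xi_{std})\le g_{\max}(P)=0$. I expect the most delicate step to be identifying the specific contact structure on $\partial P$ as the standard $\xi_{std}$ among the infinitely many tight contact structures on $T^3$.
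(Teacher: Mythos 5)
Your construction is correct, but it takes a genuinely different route from the paper's. The paper works in $\mathbb{CP}^2$: it takes the Clifford torus (a Lagrangian toric fibre), observes that its Weinstein neighborhood is $D^*T^2$ with convex boundary $(T^3,\xi_{std})$, and declares the cap to be the complement of that neighborhood, which is a plumbing of the three coordinate lines; Lemma~\ref{first source} then applies since each line is a symplectic sphere of square $+1$ with $c_1$-pairing $3>0$. This route never invokes Theorem~\ref{criterion} and, crucially, never has to identify the boundary contact structure -- it is $\xi_{std}$ by fiat, being the boundary of the Weinstein neighborhood. Your route, by contrast, builds the cap as a concave plumbing neighborhood of the cycle of two fibres and two sections in $S^2\times S^2$ via the Gay--Stipsicz criterion (your verification of the positive solution for the degenerate $Q$ is fine, as is the exactness of $\omega$ on the boundary), and this forces you to separately identify the concave boundary with $(T^3,\xi_{std})$ among the infinitely many tight structures on $T^3$ -- the step you rightly flag as delicate. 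Appealing to the uniqueness clause of Theorem~\ref{non-negative implies concave} only reduces this to exhibiting one reference instance of the graph whose concave boundary is known to be $\xi_{std}$; the clean way to supply it is exactly the paper's mechanism, namely that your divisor complement contains the monotone Lagrangian torus $\{|z|=1\}\times\{|w|=1\}$ whose Weinstein neighborhood has boundary $(T^3,\xi_{std})$ and whose complement is a concave neighborhood of $D$. So the two proofs are both ``toric divisor versus central Lagrangian fibre'' pictures in different toric manifolds; the paper's is more economical because it starts from the Lagrangian side, while yours is more explicit on the genus invariants (the paper's cap has spheres of square $+1$ so $g_s=0$ is immediate there, whereas your components have square $0$ and you correctly manufacture a square-$2$ sphere by smoothing a fibre and a section). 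Both then conclude identically via Lemma~\ref{cor: preliminary inequality}.
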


\begin{proof}
 Notice that  $(T^3,\xi_{std})$ is the unit circle cotangent bundle (with respect to some metric) of $T^2$ with contact structure induced by the canonical Liouville flow in $T^*T^2$.
We can embed a Lagrangian torus in $\mathbb{CP}^2$ as a fibre of a toric moment map.
Then the complement of a Weinstein neighborhood of the Lagrangian toric fiber is a plumbing of three complex lines and hence a uniruled cap, by Lemma \ref{first source}.
\end{proof}

\begin{rmk}
 By Lemma \ref{rmk: uniruled-planar}, $c_u(Y,\xi)=0$ if $(Y,\xi)$ is planar and strongly fillable.   Notice that $c_u(T^3,\xi_{std})=0$ by Lemma \ref{cor: T^3}.
However, $(T^3,\xi_{std})$ is not planar  since  any Stein filling of a planar contact 3-manifold is negative definite by \cite{Et04}  but   there is a Stein filling of  $(T^3,\xi_{std})$ which is not negative definite, namely, the disk cotangent bundle.
Therefore, the class of contact manifolds having $c_u=0$ is strictly larger than that of those having supporting genus  zero.
\end{rmk}

\begin{eg}\label{second source}
This example is of fundamental importance in \cite{OhOn05} and we want to show that it is uniruled of complexity zero.
Consider   an augmented graph as in Subsection \ref{ss:divisor} with the numbers above the vertices being the self-intersection numbers:
$$    \xymatrix{
        \bullet^{-3}_{v_2} \ar@{-}[r]& \bullet^{-1}_{v_1} \ar@{-}[r] \ar@{-}[d]& \bullet^{-n}_{v_4} \\
			       & \bullet^{-2}_{v_3} \\
	}
$$
The genera are all zero and the symplectic areas are not specified.
In particular, the boundary $\partial P$ of the plumbing $P$  is a rational homology sphere when $n \neq 6$.
Therefore, we can identify the  second homology and cohomology group with $\mathbb{Q}$-coefficient
via the long exact sequence for the pair $(P, \partial P)$ and Lefschetz duality.
Thus
%if we use $[v_i]$ to denote the homology class of the corresponding symplectic sphere,
the first Chern class can be expressed as $PD(c_1)=\sum_{i=1}^4  a_i [v_i]$, where the $[v_i]$ are the homology classes of the corresponding spheres labeled in the graph and $PD$ stands for the Lefschetz dual. The coefficients $a_i$ are
calculated using the adjunction formula: $(\sum_{j=1}^4 a_j[v_j])\cdot [v_i]=[v_i]^2+2$ for each $i$.
It follows that $c_1=PD(2[v_1]+[v_2]+[v_3]+[v_4])$.

When $n=3,4,5$, the intersection form associated to the symplectic divisor is not negative definite and non-degenerate.
Therefore, Theorem \ref{non-negative implies concave} implies this symplectic divisor is concave for some choice of $\omega$.
Moreover, $c_1\cdot[\omega] > 0$ and hence it gives a uniruled cap.

%On the other hand, it is easy to find a surface of genus $1$ with self-intersection $6-n$ by repeating blowing down so $g_s(P) \le 1$.
If we choose an almost complex structure $J$ making the configuration $J$-holomorphic,
then we get a $(2,3)$-cusp curve with self-intersection $6-n$ by repeating blowing down.
This is the image of a sphere so we get $g_s(P)=0$.
\end{eg}

\begin{rmk}\label{rmk: unifying picture}
%(Relation to some known Betti finite type contact manifolds)
We have already seen that  how uniruled cap fits in the picture of some known Betti finite type contact manifolds.
In \cite{Et04}, \cite{Lis08}, \cite{PlVHM10}, \cite{KaL}, \cite{Ka13}, \cite{St13} and \cite{LiMa14}, those Betti finite type manifolds
have symplectic caps with a non-negative self-intersection symplectic sphere.
These are uniruled caps after a symplectic deformation, by Lemma \ref{first source}.
In \cite{OhOn05}, Ohta and Ono give some Betti finite type contact manifolds, which is the boundary of the ones in Example \ref{second source}.
In \cite{We10}, Wendl shows that  there is a unique, up to symplectic deformation,    minimal strong filling of $(T^3,\xi_{std})$.
We have seen in Lemma \ref{cor: T^3} that $(T^3,\xi_{std})$ also admits a uniruled cap.
In \cite{OhOn03}, their cap has a symplectic genus $g$ surface ($g \ge 1$) with self-intersection
greater than $2g-2$, hence can be deformed to be a uniruled cap (Lemma \ref{first source}).
The caps used in \cite{BhOn} also belong to one of the above families.
\end{rmk}

%\begin{itemize}
% \item Examples with $g_{\max}(P)=g_{\min}(P)>0$
%\end{itemize}

We move on to uniruled divisor caps with  $g_{\max}(P)=g_{\min}(P)>0$.

\begin{lemma}\label{lemma: simple plumbing}
 Let $D \subset (X,\omega)$ be a symplectic divisor with a symplectic sphere $C_1$ of self-intersection $0$ and a symplectic genus $g$ surface $C_2$ intersecting $C_1$ once.
 Suppose $X$ is a closed symplectic manifold and $\omega$ is exact on the boundary of a plumbing.
 Then there exists a plumbing $(P(D),\omega)$ which is a uniruled cap with $g_{\max}(P(D))=g_{\min}(P(D))=g$, possibly after a symplectic deformation.
\end{lemma}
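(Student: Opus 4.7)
The plan is to realize $P(D)$ as a plumbing, deform $\omega$ so that $P(D)$ becomes a uniruled and adjunction cap via Theorem~\ref{non-negative implies concave} and Lemma~\ref{first source}, and then pin down the base genus of every closed manifold $X'=P(D)\cup N$ by a two-sided comparison using the pair $(C_1,C_2)$.

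For the cap structure, the intersection form of $D$ contains the indefinite $2\times 2$ block $\bigl(\begin{smallmatrix}0&1\\1&[C_2]^2\end{smallmatrix}\bigr)$ spanned by $[C_1]$ and $[C_2]$, hence is not negative definite. Combined with the hypothesis that $\omega$ is exact on $\partial P(D)$, Theorem~\ref{non-negative implies concave} supplies a symplectic deformation of $\omega$ after which $P(D)$ is a symplectic cap. Since $C_1$ is a symplectic sphere with $[C_1]^2=0$ and, by adjunction, $c_1(P(D))\cdot[C_1]=2>0$, Lemma~\ref{first source} applied with $S=C_1$ gives a further deformation turning $(P(D),\omega)$ into a cap that is simultaneously uniruled and adjunction.

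For the base genus invariants, fix any strong filling $N$ and let $h$ be the base genus of the uniruled manifold $X'=P(D)\cup N$ (Proposition~\ref{prop:uniruled}). To show $h\le g$, I would take $k$ parallel copies of $C_1$ and smooth their positive transverse intersections with $C_2$; each smoothing glues a sphere onto $C_2$ without changing the genus, so the result is a smoothly embedded genus-$g$ surface representing $[C_2]+k[C_1]$, whose self-intersection $[C_2]^2+2k$ is positive once $k$ is sufficiently large. Lemma~\ref{l:genusUpperBound} then yields $h\le g$. For the reverse $h\ge g$, Theorem~\ref{t:Uniruled} identifies $X'$ as a blow-up of $\mathbb{CP}^2$, $S^2\times S^2$, or an $S^2$-bundle over $\Sigma_h$; since $[C_1]^2=0$ rules out $\mathbb{CP}^2$ (which has no square-$0$ sphere), and since any class of the form $\pi^*F+\sum a_iE_i$ with square $0$ must have all $a_i=0$, the primitive class $[C_1]$ must pull back from the fiber class $F$ of the underlying ruling of the minimal model $\bar X'$. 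Writing $[C_2]=\pi^*(a'F+S)+\sum b_iE_i$ in the natural basis (the coefficient of the section class $S$ equals $[C_2]\cdot[C_1]=1$), the adjunction equality for the embedded symplectic surface $C_2$ together with the blow-up formula $c_1(X')=\pi^*c_1(\bar X')-\sum E_i$ simplifies, independently of whether the $S^2$-bundle is trivial or not, to
\[
g=h-\tfrac12\sum_i b_i(b_i+1).
\]
Since $b_i(b_i+1)\ge 0$ for every integer $b_i$, this gives $h\ge g$. Combined with the upper bound, $h=g$ for every strong filling $N$, hence $g_{\max}(P(D))=g_{\min}(P(D))=g$.

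The main obstacle will be the structural step identifying $[C_1]$ with a fiber class after blow-down; this relies on the square-$0$ condition pinning the decomposition of $[C_1]$ in the uniruled classification, together with the primitivity forced by $[C_1]\cdot[C_2]=1$. A secondary point to verify is that $C_1$ and $C_2$ remain symplectically embedded in $X'$ after the deformations of the first two steps, which is transparent because those deformations are supported near $\partial P(D)$ and leave the divisor untouched.
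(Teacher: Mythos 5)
Your proposal is correct and follows essentially the same route as the paper: concavity of the plumbing via Theorem~\ref{non-negative implies concave} (the form $\bigl(\begin{smallmatrix}0&1\\1&[C_2]^2\end{smallmatrix}\bigr)$ is indefinite, hence not negative definite), the uniruled/adjunction structure via Lemma~\ref{first source} applied to $C_1$, and the base genus pinned down by the fact that $[C_1]$ is a fiber class met once by $C_2$. The only real difference is in how the base genus is computed: the paper simply cites Lalonde--McDuff~\cite{LaMc96} for ``$X'$ is ruled and $C_1$ represents a fiber'' and then observes that $C_2$ projects to the base with degree one, whereas you reprove this by a two-sided comparison (Lemma~\ref{l:genusUpperBound} applied to the smoothing of $[C_2]+k[C_1]$ for the upper bound, and the adjunction identity $g=h-\tfrac12\sum_i b_i(b_i+1)$ for the lower bound). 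Your version is more self-contained and in fact more careful than the paper's one-liner, since a degree-one map of surfaces alone only gives an inequality. One small patch: your claim that $[C_1]$ pulls back from the fiber class does not follow from primitivity and $[C_1]^2=0$ alone (in a ruled surface over $\Sigma_h$ the section class is also primitive of square zero when the bundle is trivial); you also need that $C_1$ is a \emph{sphere}, so that for $h\geq 1$ its class has vanishing section component because any map $S^2\to\Sigma_h$ has degree zero. Alternatively, just quote \cite{LaMc96} for this step, as the paper does.
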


\begin{proof}
 By \cite{LaMc96}, $X$ is ruled and $C_1$ represent a fiber.
 Since $C_2$ intersect $C_1$ transversally and positively once (See the Definition of symplectic divisor above), the projection from $C_2$ to the base is degree $1$.
 Therefore, the  genus  of the base  is the same as that of $C_2$, which is $g$.
 Moreover, $D$ has a concave neighborhood $P(D)$ by Theorem \ref{non-negative implies concave} and Remark \ref{non-degenerate implies exact}, possibly after a symplectic deformation.
 Furthermore, $P(D)$ can be chosen to be uniruled by Lemma \ref{first source}, possibly after a symplectic deformation.
\end{proof}

It is easy to give examples satisfying the assumption in Lemma \ref{lemma: simple plumbing}.
We can take $D$ to be the union of a fiber and a section of an $S^2-$bundle.
A more general construction is taking $X$ to be blown up of an $S^2-$bundle and
 $D$ can be taken to be the proper transform (or total transform, or mixed) of the union of a fiber and a section.

%\begin{itemize}
 %\item Examples with $g_{\max}(P)>g_{\min}(P)=0$
%\end{itemize}

Finally we construct uniruled divisor caps  with $g_{\max}(P)>g_{\min}(P)=0$.
A tubular neighborhood of a symplectic genus $g$ ($g>0$) surfaces with appropriate self-intersection greater than $2g-2$ give a family of examples (cf. Lemma \ref{first source} and Theorem \ref{non-negative implies concave}).
Another family of examples is given in the proof of the following lemma.

\begin{lemma}\label{lemma:gamx > gmin}
 For any integer $n \ge 0$ and $K \le 4$, there is a uniruled cap $(P_{n,K},\omega_{n,K})$ which can be symplectically embedded in a ruled manifold with base genus $j$ for all $0 \le j \le n$.
 Moreover, one has $g_{\max}(P_{n,K})=n$ and $g_{\min}(P_{n,K})=0$.
\end{lemma}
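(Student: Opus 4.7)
The plan is to construct $P_{n,K}$ as the concave neighborhood of a symplectic divisor $D$ that contains a symplectic genus-$n$ surface $S_n$ of positive self-intersection and that can be realized inside ruled surfaces of every base genus $0 \le j \le n$. The presence of $S_n$ in the cap forces $g_s(P_{n,K}) \le n$ and, via Lemma~\ref{l:genusUpperBound}, $g_{\max}(P_{n,K}) \le n$; simultaneously Lemma~\ref{first source} makes $P_{n,K}$ both uniruled and adjunction. The lower bounds $g_{\min}(P_{n,K}) \le 0$ and $g_{\max}(P_{n,K}) \ge n$ will be read off from the two extreme embeddings $j=0$ and $j=n$.

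For Step 1 I would fix a ruled surface $R_n$ of base genus $n$ (a suitable $S^2$-bundle over $\Sigma_n$, possibly blown up, where the constraint $K \le 4$ regulates how much blowing up is allowed) and specify inside it a symplectic divisor $D$ whose distinguished component is a genus-$n$ section $S_n$ of positive self-intersection. Theorem~\ref{non-negative implies concave} together with Remark~\ref{non-degenerate implies exact} then promote a neighborhood of $D$, after a symplectic deformation if necessary, to a concave neighborhood $P_{n,K} := P(D)$. The induced contact structure on $\partial P_{n,K}$ depends only on the augmented graph of $D$, so the cap itself is determined intrinsically and is uniruled/adjunction by Lemma~\ref{first source}.

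For Step 2, for each $0 \le j \le n$ I would exhibit a ruled surface $R_j$ of base genus $j$ containing a symplectic copy of the same divisor $D$. The natural mechanism is to produce $R_j$ from $R_n$ by performing $(n-j)$ symplectic operations entirely within the complement of $D$ (symplectic fiber sums along tori, or rational blow-downs, chosen to alter only the filling side). Then $N_j := R_j \setminus \operatorname{int}(P_{n,K})$ is a strong filling of $\partial P_{n,K}$ whose union with $P_{n,K}$ is a ruled manifold of base genus $j$. The choice $j=0$ yields $g_{\min}(P_{n,K}) \le 0$ and $j=n$ yields $g_{\max}(P_{n,K}) \ge n$, which combined with the upper bound from Step 1 gives the claimed equalities.

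The main obstacle I expect is Step 2 at the bottom end $j = 0$: exhibiting a symplectic realization of the divisor $D$, in particular of the genus-$n$ component $S_n$ with its prescribed positive self-intersection, inside a \emph{rational} ruled surface. As $j$ decreases, the adjunction inequality in the ambient manifold tightens, and this is precisely what pins down the bound $K \le 4$. Getting an honest symplectic representative (not merely a smooth one) in the rational case is the delicate point, and will likely require augmenting $D$ with auxiliary sphere components whose homology classes are allowed to vary with $j$ even though their plumbed neighborhood structure, and hence the cap $P_{n,K}$ itself, does not.
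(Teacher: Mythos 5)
Your proposal has a genuine gap, and the choice of distinguished component is structurally the wrong one. You take the key surface in the divisor to be a genus-$n$ \emph{section} $S_n$. But a degree-one multisection ties the base genus of any ambient ruled manifold too closely to the genus of the surface (compare Lemma~\ref{lemma: simple plumbing}, where a surface meeting the fiber once yields $g_{\max}=g_{\min}$); more importantly, your entire Step~2 --- producing ruled surfaces $R_j$ of base genus $j<n$ containing the \emph{same} divisor by performing ``symplectic fiber sums along tori or rational blow-downs in the complement of $D$'' --- does not work. Fiber summing along tori generically raises the Kodaira dimension and destroys uniruledness, and rational blow-down does not convert a ruled surface over $\Sigma_n$ into one over $\Sigma_j$. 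No surgery on the filling side is exhibited that realizes the intermediate base genera, and this realization is precisely the content of the lemma. Your attribution of the constraint $K\le 4$ to an adjunction inequality at $j=0$ is also off the mark (rational and ruled surfaces impose no such adjunction obstruction).

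The paper's construction avoids all of this by using a \emph{bisection} rather than a section: $D_{n,K}$ consists of a square-zero sphere $C_1$ together with a genus-$2n$ surface $C_2^{n,K}$ of square $K$ meeting $C_1$ transversally at two points. For each $0\le j\le n$ this divisor is realized directly in a blow-up of $S^2\times\Sigma_j$ by resolving two sections and $2(n-j)+1$ fibers (giving genus $2n$ and square $8(n-j)+4$) and then blowing up $8(n-j)+4-K$ points on the surface; the requirement that this number be nonnegative at $j=n$ is exactly where $K\le 4$ enters. Since the augmented graph is the same for every $j$, Theorem~\ref{non-negative implies concave} and Remark~\ref{non-degenerate implies exact} give one and the same concave cap, and the complements supply strong fillings realizing every base genus from $0$ to $n$. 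The upper bound $g_{\max}\le n$ then comes not from Lemma~\ref{l:genusUpperBound} (which would only give $2n$ for a genus-$2n$ surface) but from the fact that $C_1$ must represent a fiber class by \cite{LaMc96}, so $C_2^{n,K}$ projects to the base with degree $2$ and Riemann--Hurwitz bounds the base genus by $n$. To repair your argument you would need to replace the genus-$n$ section by such a degree-two multisection and give the explicit embeddings for every $j$.
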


\begin{proof}
Let $D_{n,K}$ be a symplectic divisor consisting of a symplectic sphere $C_1$ of self-intersection $0$, and a symplectic surface $C_2^{n,K}$ of genus $2n$ and self-intersection $K$ which intersects $C_1$ positively and transversally at two distinct points.
By  Theorem \ref{non-negative implies concave} and Remark \ref{non-degenerate implies exact}, any embedding of $D_{n,K}$ in a symplectic manifold can be made a concave divisor after possibly deforming the symplectic form, so we will take $(P_{n,K},\omega_{n,K})$ to be a concave neighborhood of $D_{n,K}$.
Notice that $(P_{n,K},\omega_{n,K})$ can be made a uniruled cap (Lemma \ref{first source}).
We will identify $D_{n,K}$ as a symplectic divisor in a suitable blow-up of $(S^2 \times \Sigma_j,\omega_j)$, where $\omega_j$ is a product symplectic structure,
for all $0 \le j \le n$.

% Let $(X_j,\omega_j)$ be a family of closed symplectic manifolds for $j=0,1,\dots,n$ that to be determined.
% Let $C^j_1 \subset X_j$ be a symplectic sphere of self-intersection $0$.
% Let $C^j_2 \subset X_j$ be a symplectic surface of genus $2n$ which intersects $C^j_1$ at distinct two points positively and transversally with self-intersection $K \le 4$.
% By Theorem \ref{non-negative implies concave} and Remark \ref{non-degenerate implies exact}, $D^j=C^j_1 \cup C^j_2$ is a concave divisor after possibly deforming $\omega_j$.

% To prove the lemma, the first step is to find a family of symplectic ruled manifolds $(X_j,\omega_j)$ with base genus $j$ such that
% each $X_j$ has a symplectic divisor with the graph the same as $D^j$.
% We first take $(\overline{X_j},\overline{\omega_j})$ to be the product manifolds $(S^2 \times \Sigma_j,\omega_j)$ with a product symplectic structure, where $\Sigma_j$ is a symplectic genus $j$ surface.
 Let $\overline{C_1}$ be $S^2 \times \{ p\}$ for some $p \in \Sigma_j$.
 We construct another symplectic surface $\overline{C_2^{n,K}}$ by resolving two disjoint copies of $\{ \text{point} \} \times \Sigma_j$ and $2(n-j)+1$ disjoint copies of $S^2 \times \{ \text{point}\}$ different from $\overline{C_1}$.
 The symplectic surface $\overline{C_2^{n,K}}$ has genus $2n$ and intersect $\overline{C_1}$ positively transversally twice with self-intersection $8(n-j)+4$.
We blow up $\overline{C_2^{n,K}}$ at $8(n-j)+4-K$ regular points away from $\overline{C_1}$ and denote the proper transforms of $\overline{C_1}$ and $\overline{C_2^{n,K}}$ as $C_1$ and $C_2^{n,K}$, respectively.
The union of $C_1$ and $C_2^{n,K}$ is the desired symplectic divisor $D_{n,K}$, so the corresponding uniruled cap  $(P_{n,K},\omega_{n,K})$ has been embedded in a ruled surface of genus $j$ as promised.

%Now, by possibly deforming the symplectic form $\omega_j$, $D^j=C^j_1 \cup C^j_2$ admits concave neighborhood $P(D^j)$ capping its contact boundary.
%Therefore, there exist a sufficiently large $t$ such that we can cut out $(P(D^j),\omega^j)$ from $(X_j,\omega_j)$
%and glue it back $(P(D_0),t\omega_0)$ for any $j$ by inserting part of a symplectization.
%Hence, $(P(D_0),t\omega)$ can be embedded in a ruled manifold of genus $j$, for any $0\le j\le n$.
%As a result, the first half of the Lemma follows.

To show that $g_{\max}(P_{n,K})=n$, we recall that if $P_{n,K}$ is embedded in a closed symplectic manifold $(X,\omega)$, then $X$ is ruled and $C_1$ represents a fiber class (See \cite{LaMc96}).
Therefore, the fact that $[C_2^{n,K}] \cdot [C_1]=2$ implies the projection of $C_2^{n,K}$ to the base has degree $2$.
Hence, the base genus is less than $n+1$ by Riemann-Hurwitz's formula.
\end{proof}

\begin{comment}

%\section{Further Discussion}\label{Further Discussion}

%This section is devoted to a general discussion for possible future study.

%\begin{conj}\label{conj 1}
%Suppose $(Y,\xi)$ admits a uniruled cap.
%Then, $(Y,\xi)$ is of strong Betti finite type.
%\end{conj}

%The proof of Corollary \ref{cor 2} fails for the conjecture because we rely on Stipsicz result on bounding the topological complexity.
%In the proof of his result, the main step is that any Stein domain can be symplectically embedded into a minimal compact complex surface of general type.
%If one can prove that any minimal strong symplectic filling can be symplectically embedded into a closed minimal symplectic manifold of general type, then it is possible that Conjecture \ref{conj 1} will follow.

%\begin{conj}\label{conj 2}
%All prime contact manifold of Betti finite type (or strong Betti finite type) have a sequence of caps $(P_n,\omega_{P_n})$ such that $c_1(P_n)\cdot[(\omega_{P_n},\alpha_{P_n})]$ converges to a non-negative number.
%\end{conj}

%This conjecture is based on the philosophy that properties that are not constrained by pseudo-holomorphic curve should be flexible and $c_1(P)\cdot[(\omega_{P},\alpha_P)] = 0$ can be thought of as the limiting case where pseudo-holomorphic curve gives constraint on the symplectic manifold.

%On the other hand, there are several nice properties about planar open books that one can ask whether it can be generalized to contact manifolds that admit uniruled caps.
%For example, any strong filling of a planar open book admits a symplectic Lefschetz fibration structure (after certain modification \cite{We10}) compatible with a fixed open book on the boundary.
%It would be interesting to know whether it is true for boundary of uniruled caps.
%Another interesting question is to look at how planar torsion \cite{We13} is related to contact manifolds that admits a uniruled cap.
%In general, one can ask whether a non-fillable contact manifold that admits a uniruled cap is strong symplectic cobordant to an overtwisted contact 3-manifold as in \cite{We13-2}.

%Another natural question would be to give a sharper bound for the genus of a uniruled cap and to give an example that a uniruled cap can really sympelctically embedded into ruled manifolds with many different base genus with Stein complement.

%Finally, we are also interested in the study of contact embeddings into closed uniruled or symplectic Calabi-Yau manifolds.
%A contact embedding into a uniruled manifold always separate (See \cite{We14}).
%It would be  interesting to know when it gives a uniruled cap.
%From this point of view, Calabi-Yau caps might be more convenient to be used as any separating contact embedding in to a closed symplectic Calabi-Yau surface gives a Calabi-Yau cap.
\end{comment}

\end{document}